\def\bydef{{\buildrel \rm def \over =}}
\def\c{c}
\def\n{n}
\def\GL{GL}
\def\SL{SL}
\def\sp{\mathop{span}}
\def\tr{\mathop{tr}}
\def\bA{{\boldsymbol{A}}}
\def\bF{{\boldsymbol{F}}}
\def\bH{{\boldsymbol{H}}}
\def\boC{{\boldsymbol{C}}}
\def\boS{{\boldsymbol{S}}}
\def\bF{{\boldsymbol{F}}}
\def\bN{\mathbb{N}}
\def\bZ{\mathbb{Z}}
\def\bC{\mathbb{C}}
\def\bR{\mathbb{R}}
\def\bS{\mathbb{S}}
\def\CPo{{\mathbb{C}\hbox{P}^1}}
\def\Rt{{\mathbb{R}^2}}
\def\RPt{{\mathbb{R}P^2}}
\def\RPnm{{\mathbb{R}\hbox{P}^{n-1}}}
\def\cA{{\cal A}}
\def\cAM{{{\cal A}_{M}}}
\def\cB{{\cal B}}
\def\cC{{\cal C}}
\def\cD{{\cal D}}
\def\cE{{\cal E}}
\def\cG{{\cal G}}
\def\cI{{\cal I}}
\def\cJ{{\cal J}}
\def\cH{{\cal H}}
\def\cP{{\cal P}}
\def\cQ{{\cal Q}}
\def\one{\mathbbb{1}}
\newtheorem{definition}{Definition}
\newtheorem{theorem}{Theorem}
\newtheorem*{theorem1}{Theorem 1}
\newtheorem*{theorem2}{Theorem 2}
\newtheorem*{theorem3}{Theorem 3}
\newtheorem*{theorem4}{Theorem 4}
\newtheorem*{theorem5}{Theorem 5}
\newtheorem{lemma}{Lemma}
\newtheorem{corollary}{Corollary}
\newtheorem{example}{Example}
\newtheorem{remark}{Remark}
\newtheorem{question}{Open Question}
\newtheorem{proposition}{Proposition}
\newtheorem{conjecture}[theorem]{Conjecture} 
\newcolumntype{S}{>{\centering\arraybackslash} m{.475\linewidth}}
\newcolumntype{T}{>{\centering\arraybackslash} m{10.5cm}}
\newcolumntype{U}{>{\centering\arraybackslash} m{1.5cm}}
\title{%
  Exponential growth of norms\\
  in semigroups of linear automorphisms\\
  and Hausdorff dimension of self-projective IFS.
}
\author{Roberto De Leo}
\begin{document}
\maketitle
\begin{abstract}
  Given a finitely generated semigroup $S$ of the (normed) set of
  linear maps of a vector space $V$ into itself, we find sufficient conditions 
  for the 
  exponential growth of the number $N(k)$ of elements of the semigroup 
  contained in the sphere of radius $k$ as $k\to\infty$. We relate the growth rate 
  $\lim_{k\to\infty}\log N(k)/\log k$ to the exponent of a zeta function naturally
  defined on the semigroup and, in case $S$ is a semigroup of volume-preserving
  automorpisms, to the Hausdorff and box dimensions of the limit set of the 
  induced semigroup of automorphisms on the corresponding projective space.
\end{abstract}
\section{Introduction}
The asymptotic behaviour of the norms of products of some fixed finite 
set of square matrices has been extensively studied in the context of
the theory of random matrices. In particular, in a celebrated paper~\cite{FK60},
Furstenberg and Kesten proved that, given some finite number of square 
matrices $A_i$, under suitable conditions the norm of almost all products 
of $k$ of the $A_i$ grows as $\gamma^k$, where $\gamma$ is the Lyapunov 
exponent associated to the $A_i$.
In this paper we address the subject from a different point of view, namely
we consider {\em all} possible products of the $A_i$ and provide sufficient 
conditions for the existence and boundedness of $\lim_{k\to\infty}\log N(k)/\log k$, 
where $N(k)$ is the number of these products that lie inside the (closed)
sphere of radius $k$.
As a byproduct, we relate the rate of this growth to a zeta function naturally 
defined on semigroups of square matrices and, in particular cases, to the 
Hausdorff dimension of the limit set of the orbit of a point under the natural 
action induced by the semigroup on its corresponding projective space.
\vskip .35cm\noindent
{\bf Motivational Example 1: 
The {\em Cubic gasket} $\boC_3$.}
The real self-projective fractal $\boC_3\subset\bR P^2$ (see Fig.~\ref{fig:dc}) 
is the main reason for our interest in the subject of the present paper. 
\begin{figure}
  \centering
  \begin{tabular}{SS}
    \includegraphics[width=7cm,clip=true,trim=80 250 20 100]{gl3z8-l1-d7}&
    \begin{tabular}{c}
      \includegraphics[width=5.3cm]{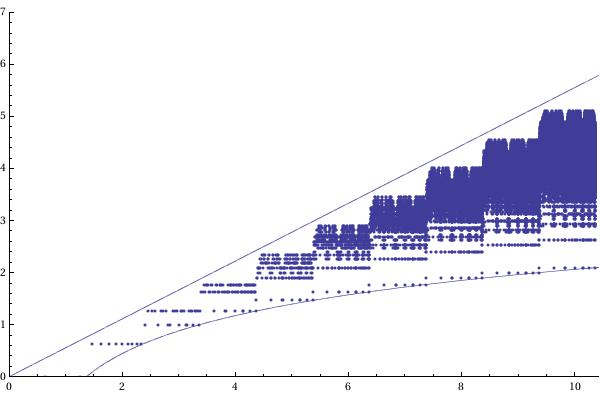}\\
      \includegraphics[width=5.3cm]{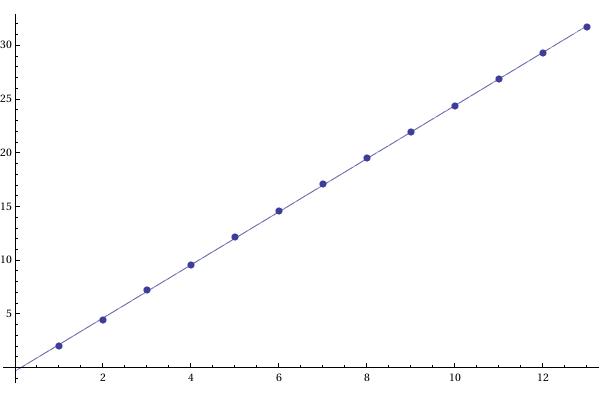}\\
    \end{tabular}\\
  \end{tabular}
  \caption{%
    \small 
    (left) The Cubic gasket $\boC_3\subset\bR P^2$ 
    in the triangle $T$ of vertices with homogeneous coordinates
    $[x:y:z]=[0:0:1]$, $[1:0:1]$, $[0:1:1]$. The picture shows (in green)
    the set $T_{7,\boC_3}$ in the affine chart $z=1$.
    (right, top) Log-log plot of the norms of matrices $C_I\in\boC_3$, $|I|\leq11$, 
    ordered in lexicographic order. The fastest growing norms are 
    $\|C_{i}\cdot C_{i+1}\cdots C_{i+k}\|\simeq\alpha_3^k$, where sums of indices 
    are intended ``modulo 3'' and $\alpha_3\simeq1.84$ is the Tribonacci constant.
    The slowest growing ones are $\|C_i^k\|=k$.
    (right, bottom) Log-log plot of the function $N_{\boC_3}(k)$ representing 
    the number of matrices of $\boC_3$ whose norm is not larger than $k$.
    Numerical data (the values of $N_{\boC_3}(k)$ shown in the graphic 
    are {\em exact}, see Table~\ref{tab:N}) 
    indicate that $N_{\boC_3}(k)\simeq Ak^s$ for $A\simeq0.967$ and $s\simeq 2.444$.
    According to Conjecture 1, this entails that $\dim_B\boC_3\geq1.63$.
  }
  \label{fig:dc}
\end{figure}
It was first introduced, in the author's knowledge,
by G.~Levitt~\cite{Lev93} and independently rediscovered more recently by the 
author and I.A.~Dynnikov in connection with the S.P. Novikov theory of plane 
sections of periodic surfaces~\cite{DD09}. We call it {\em cubic} because 
it is related to the topology of plane sections of the cubic polyhedron $\{4,6|4\}$
(see~\cite{DD09} for details) and {\em gasket} because it has the same 
topology of the Sierpinski and Apollonian gaskets. Like the Sierpinski gasket,
it can be thought as the set obtained by removing from the (projective)
triangle $T(\cE)$ with vertices $[e_1]$, $[e_2]$, $[e_3]$, where 
$\cE=\{e_1,e_2,e_3\}$ is any frame of $\bR^3$, the (projective) triangle
with vertices $[e_1+e_2]$, $[e_2+e_3]$, $[e_3+e_1]$ and repeating this
procedure recursively on the three triangles left. We denote by 
$T_{k,\boC_3}\subset T(\cE)$
the set obtained after repeating this procedure $k$ times. Clearly 
$\boC_3=\cap_{k=1}^\infty T_{k,\boC_3}$, i.e. we can get as close as we please
to $\boC_3$ by considering sets $T_{k,\boC_3}$ with large values of $k$.
$\boC_3$ 
can also be characterized as the (unique) 
subset of the triangle with vertices $[1:0:0]$, $[0:1:0]$ and $[0:0:1]$ which 
is invariant under the action of the (free) subsemigroup of $PSL_3(\bN)$ generated 
by the projective automorphsims $\psi_i$, $i=1,2,3$, induced by the following 
three $SL_3(\bN)$ matrices:
$$
C_1 = \begin{pmatrix}
  1&0&0\cr
  1&1&0\cr
  1&0&1\cr
  \end{pmatrix},\,
C_2 = \begin{pmatrix}
  1&1&0\cr
  0&1&0\cr
  0&1&1\cr
  \end{pmatrix},\,
C_3 = \begin{pmatrix}
  1&0&1\cr
  0&1&1\cr
  0&0&1\cr
  \end{pmatrix}.
$$
By abuse of notation, we denote by $\boC_3$ also the semigroup generated
by the $C_i$.
As a consequence of a conjecture of S.P. Novikov~\cite{Nov00}, the
set $\boC_3$ is supposed to have Hausdorff dimension {\em strictly} 
between 1 and 2.
What makes checking this conjecture non-trivial is that each $\psi_i$
has exactly one of the three vertices as fixed point and in that
point it has Jacobian equal to $\one_3$, namely the 
iterated function system (IFS)
$\{\psi_1,\psi_2,\psi_3\}$ is {\em parabolic} rather than 
{\em hyperbolic}\footnote{Recall that a IFS $\{f_1,\dots,f_m\}$ 
on a metric space $(M,d)$ is said {\em hyperbolic} when all 
$f_i$ are contractions with respect to $d$ and {\em parabolic}
when all $f_i$ are non-expanding maps.}. 

No analytical bound for this fractal is known to date. 
In Section~\ref{sec:fract}, based on numerical evidence and
Theorems 4 and 5, valid for $2\times2$ matrices,
we conjecture that the box dimension of $\boC_3$ is related by
the growth rate of the norms of the elements of the semigroup
generated by the $C_i$ (see Fig.~\ref{fig:dc}), namely by 
$s=\lim_{k\to\infty}\log N(k)/\log k$, where $N(k)$ is the number of 
matrices of $\boC_3$ inside the closed ball of $M_3(\bR)$ of radius $k$.
According to this conjecture, $\dim_B\boC_3\geq2s/3\simeq1.63$ 
(see Section~\ref{sec:rSG}).
%
%
\vskip .35cm\noindent
{\bf Motivational Example 2: 
The {\em Apollonian gasket}.}
The complex self-projective fractal $\bA_3\subset\bC P^1$ is possibly 
the fractal with the 
oldest ancestry, since its construction relies on a celebrated
result of the Hellenistic mathematician Apollonius of Perga 
(ca 262 BC -- ca 190 BC), known in his times as {\em The Great Geometer}.
Apollonius' result, contained in the now--lost book {\em Tangencies} but
fortunately reported by Pappus of Alexandria in his 
{\em Collection}~\cite{Pap40}
published about five centuries later, concerns the existence of circles
tangent to a given triple of objects that can be any combination of points,
straight lines and circles. In particular, given three circles which are
mutually externally tangent to each other (sometimes called the {\em four 
coins problem}~\cite{Old96}),
there exist 
exactly two new circles tangent to each of them, one externally and one 
internally (see Fig.~\ref{fig:apollonian}). 
The three given circles plus any one of the new 
ones\footnote{In Soddy's honor the two new circles are called {\em Soddy's circles}.}
form a {\em Descartes configuration}, since it was Descartes that stated
the following remarkable relation between the curvatures $c_1,\dots,c_4$
of the four circles (see~\cite{Cox68} for details)
memorialized three centuries later by the Chemistry
Nobelist Frederick Soddy in his poem ``The Kiss Precise''~\cite{Sod36} 
after rediscovering it independently: 
$2\sum_{i=1}^4c^2_i = \left(\sum_{i=1}^4c_i\right)^2$.

Since M\"obius transformations preserve circles and are transitive on
triples of distinct points, 
they also act transitively on
the set of all possible Descartes configurations; this fact suggests that
their most natural environment is the Riemann 
sphere $\bC P^1$ rather than the plane. Any Descartes configuration $D$ 
divides $\bC P^1$ in 4 curvilinear triangles $T_i$ in such a way that every 
circle of $D$ is one of the two Soddy circles of the remaining three circles 
of $D$. 
By drawing the new Soddy circle of each of the 4 triples we are left
with 4 new Descartes configurations. By repeating this process recursively
we generate an infinite osculating circle packing of $\bC P^1$ which, not
surprisingly, is called {\em Apollonian packing}.

Here we rather focus our attention on any one of the curvilinear triangles $T$
and call {\em Apollonian gasket} $\bA_3$ the set of points of $T$ left after
removing from $T$ the interior of all Soddy circles inside it.
Like in case of the cubic gasket, $\bA_3$ can be characterized as the 
invariant set of a complex self-projective 
{\em parabolic} IFS. The fact that, thanks to the complex
structure of $\bC P^1$, $\bA_3$ is {\em self-conformal} was exploited by 
Mauldin and Urbanski to prove some of its fundamental properties~\cite{MU98}. 
Unfortunately these techniques do not seem to extend to the previous (real) 
case, when the IFS maps are parabolic but not conformal. 


In 1967 K.E. Hirst~\cite{Hir67} introduced the
{\em Hirst semigroup} $\bH$, namely the subsemigroup of $SL_4(\bN)$
generated by the matrices
$$
H_1 = \begin{pmatrix}
  1&0&0&0\cr
  0&1&0&0\cr
  1&1&1&2\cr
  1&1&0&1\cr
  \end{pmatrix},\,
H_2 = \begin{pmatrix}
  1&0&0&0\cr
  0&0&1&0\cr
  1&1&1&2\cr
  1&0&1&1\cr
  \end{pmatrix},\,
H_3 = \begin{pmatrix}
  0&1&0&0\cr
  0&0&1&0\cr
  1&1&1&2\cr
  0&1&1&1\cr
  \end{pmatrix},
$$
as an effective tool to represent the radii of the Soddy circles
in the gasket.

In a series of fundamental contributions to the study of the Hausdorff
dimension of the gasket~\cite{Boy70,Boy71,Boy72,Boy73a,Boy73b,Boy82}, 
D.W. Boyd ultimately 
characterized this dimension in terms of the Hirst semigroup by proving (implicitly,
in terms of the circles' curvatures) that: 
1) the number $N_\bH(k)$ of the semigroup matrices 
with norm\footnote{Since all norms are equivalent in finite dimension, this
is true for any norm.} not larger than $k$ is logarithmically asymptotic
to $k^s$ for some $s>0$, namely 
$\lim_{k\to\infty}\frac{\log N_\bH(k)}{\log k}=s$; 2)
$\dim_H\bA_3=s$.

Later in this paper we show that $\bA_3$ can be seen 
as the invariant set of the {\em parabolic} Kleinian IFS 
corresponding to the subsemigroup of $SL_2(\bC)$ generated 
by the matrices
$$
A_1 = \begin{pmatrix}
  0&i\cr
  i&2\cr
  \end{pmatrix},\,
A_2 = \frac{1}{2}\begin{pmatrix}
  \phantom{-}1&1\cr
  -1&3\cr
  \end{pmatrix},\,
A_3 = \frac{1}{2}\begin{pmatrix}
  1&-1\cr
  1&\phantom{-}3\cr
  \end{pmatrix},
$$
which by abuse of notation we will denote too by $\bA_3$,
and conjecture that 
$\lim_{k\to\infty}\log N_{\bA_3}(k)/\log k=2\dim_H\bA_3$ based on the fact that this relation
holds for similar semigroups that induce {\em hyperbolic} IFSs and on numerical 
evidence.
\begin{figure}
  \centering
  \begin{tabular}{cc}
    \includegraphics[width=4.5cm,clip=true,trim=160 520 180 70]{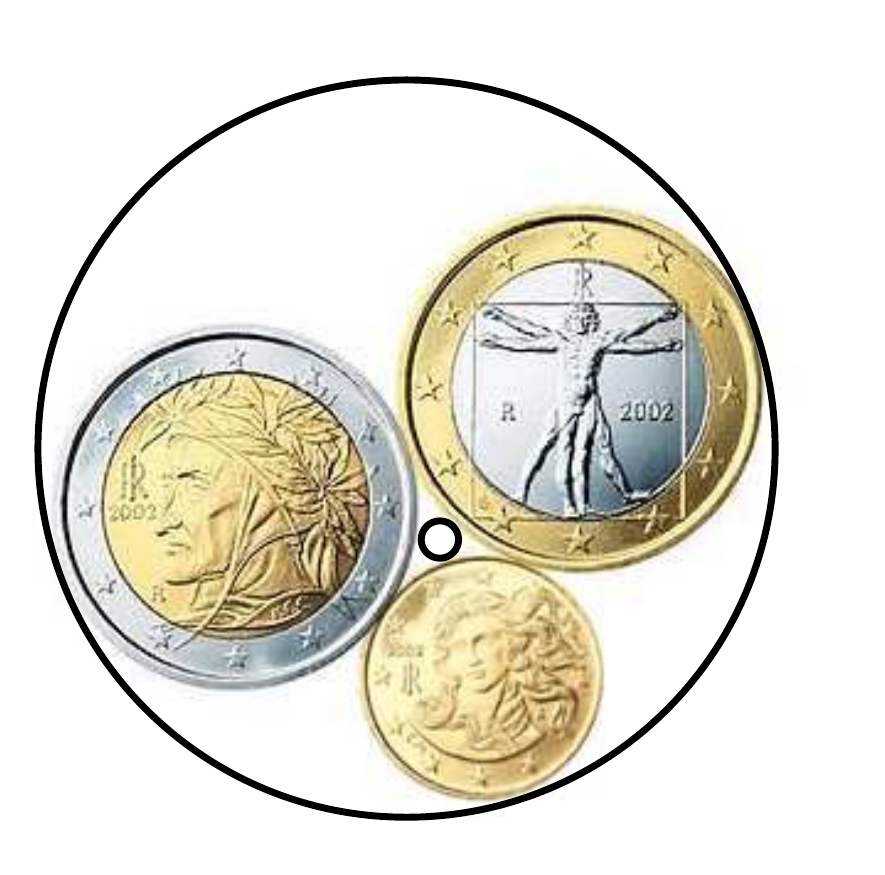}&
    \includegraphics[width=6cm]{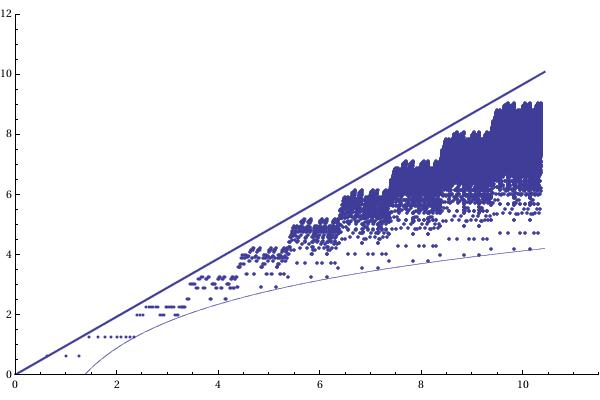}\\
    \includegraphics[width=7cm,clip=true,trim=150 560 130 100]{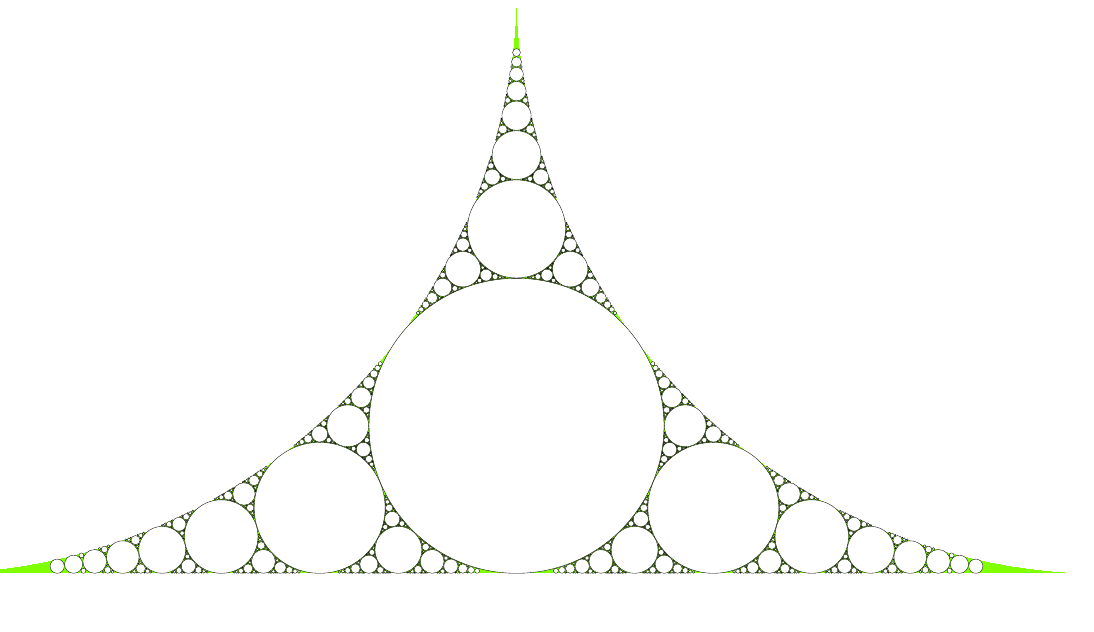}&
    \includegraphics[width=6cm]{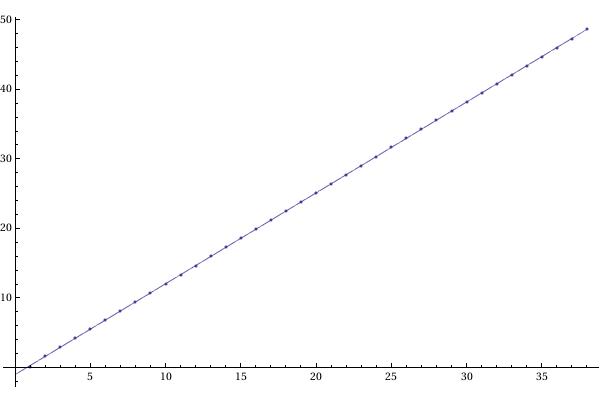}
  \end{tabular}
  \caption{%
    \footnotesize
    (top, left) Inscribed and circumscribed circles in the 
    {\em four coins problem}.
    (bottom, left) Apollonian gasket $\bA_3$ in the curvilinear triangle 
    with mutually tangent arcs as sides and the points of homogeneous
    coordinates $[z:w]$ equal to $[1:1]$, $[-1:1]$, $[i,1]$ as vertices, 
    represented in the affine chart $w=1$. In the image it is shown (in green) 
    the set $T_{7,\bA_3}$. (right) Log-log plots of the norms of the matrices 
    of the semigroup $\bH$ 
    in lexicographic order (top) and of the relative function $N_\bH(k)$
    (bottom) counting the number of matrices of $\bH$ whose norm is
    not larger than $k$ (see Table~\ref{tab:N} for the values of $N_\bH(k)$
    shown in the graph).
  } 
  \label{fig:apollonian}
\end{figure}
%
%
%
\vskip .35cm\noindent
{\bf Motivational Example 3: 
The {\em Sierpinski gasket}.}
The self-affine fractal set $\boS_3\subset\Rt$ is less ancient than the 
Apollonian one, having been introduced in the Mathematics literature
by W. Sierpinski only in 1915~\cite{Sie15}, but it does have a long history too
since its pattern has been known and used in art for about a 
millennium~\cite{PA02} (see Fig.~\ref{fig:Sie}). 
Its dimension is easily calculated: $\dim_H\boS_3=\log_23$ (e.g. see~\cite{Fal90}).
Since both $PSL_3(\bR)$ and $PSL_2(\bC)$ contain a subgroup isomorphic
to the group of affine transformations of the plane, the Sierpinski gasket 
can also be seen as a real (respectively complex) self-projective fractal of 
$\bR P^2$ (respectively $\bC P^1$).

A semigroup generating the Sierpinski fractal in $\RPt$ is,
for example, the one generated by the matrices
$$
S^\bR_1 = \frac{1}{^3\sqrt{2}}\begin{pmatrix}
  2&0&0\cr
  1&1&0\cr
  1&0&1\cr
  \end{pmatrix},\,
S^\bR_2 = \frac{1}{^3\sqrt{2}}\begin{pmatrix}
  1&1&0\cr
  0&2&0\cr
  0&1&1\cr
  \end{pmatrix},\,
S^\bR_3 = \frac{1}{^3\sqrt{2}}\begin{pmatrix}
  1&0&1\cr
  0&1&1\cr
  0&0&2\cr
  \end{pmatrix}.
$$
\begin{figure}
  \centering
  \includegraphics[width=5cm,clip=true,trim=80 250 20 100]{gl3z8-l2-d7}\hskip.75cm\includegraphics[height=5.5cm]{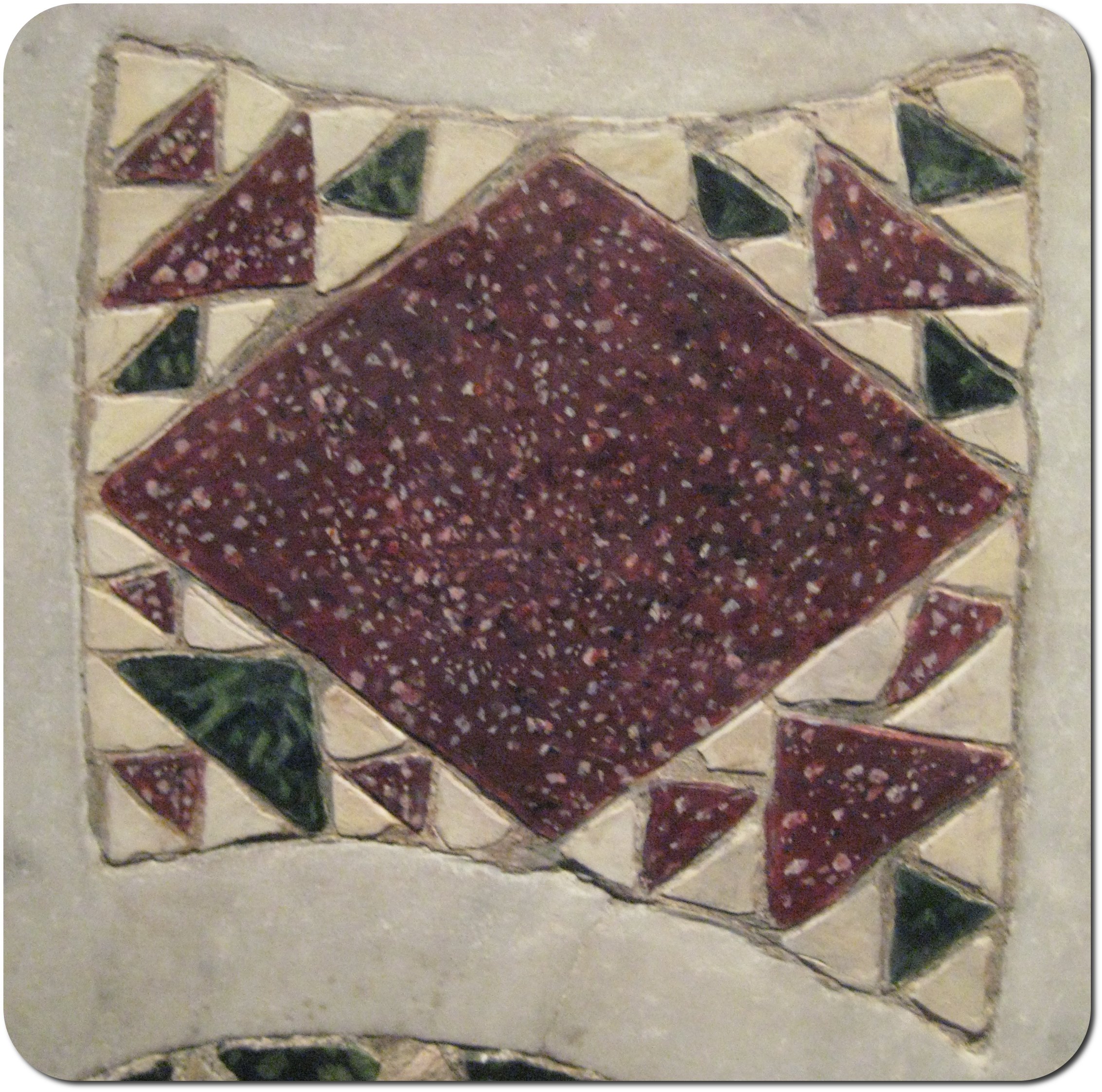}
  \caption{%
    \small
    (left) Image of the Sierpinski gasket in the triangle $T$ with 
    vertices $(0,0)$, $(1,0)$, $(0,1)$.
    In the picture it is shown, in green, the set $T_{7,\boS_3}$. 
    (right) Detail of a cosmatesque~\cite{PA02} mosaic dated about 11th--12th 
    century (photo taken by the author at the Phillips Museum in Washington DC).
  }
  \label{fig:Sie}
\end{figure}

One generating it in $\CPo$ is, for example, the one induced by the matrices
$$
S^\bC_1 = \frac{1}{\sqrt{2}}\begin{pmatrix}
  1&i\cr
  0&2\cr
  \end{pmatrix},\,
S^\bC_2 = \frac{1}{\sqrt{2}}\begin{pmatrix}
  1&1\cr
  0&2\cr
  \end{pmatrix},\,
S^\bC_3 = \frac{1}{\sqrt{2}}\begin{pmatrix}
  1&-1\cr
  0&\phantom{-}2\cr
  \end{pmatrix}.
$$
The regularity of the matrices $S^\bR_i$ and $S^\bC_i$ makes possible to
perform simple direct calculations that illustrate the main points of this paper.

Consider first the real version. Let $\|S\|_\infty$ be the norm given by
the maximum absolute row sum of $S$. Since all lines of the $S^\bR_i$
sum to 2 then $\|S^{\bR}_{i_1}\cdots S^{\bR}_{i_p}\|=(2^{-1/3})^p\cdot 2^p=2^{2p/3}$
for every $p\geq1$. Hence in the sphere of radius $k$ lie 
$$
N(k)=\sum_{p=0}^{\lfloor\frac{3}{2}\log_2k\rfloor}3^p=\frac{3^{\lfloor\frac{3}{2}\log_2k+1\rfloor}-1}{2}
$$
products of the $S^\bR_i$, where $\lfloor3\log_2k/2\rfloor$ is the integer
part of $3\log_2k/2$, and therefore
$$
s_{\bR}=\lim_{k\to\infty}\frac{\log_2N(k)}{\log_2k}=\frac{3}{2}\log_23.
$$
Note that $s_\bR$ is also the exponent that separates the values for which
the series $\sum_{S\in\langle S^\bR_i\rangle}\|S\|^{-s}$ diverges from those for
which it diverges, where the sum is extended to all
elements of the semigroup freely generated by the $S^\bR_i$.
Finally, note that the following relation holds between the Hausdorff
dimension of the Sierpinski gasket and the rate growth: $3\dim_H\boS_3=2s_\bR$.

Consider now the complex version. Endow $M_2(\bC)$ with the norm
$\|S\|$ given by the largest modulus of the entries of $S$.
Since the last row of each $S^\bC_i$ is $(0,2)$, then 
$\|S^{\bC}_{i_1}\cdots S^{\bC}_{i_p}\|=(2^{-1/2})^p\cdot 2^p=2^{p/2}$
for every $p\geq1$. Hence in this case
$$
N(k)=\sum_{p=0}^{\lfloor2\log_2k\rfloor}3^p=\frac{3^{\lfloor2\log_2k+1\rfloor}-1}{2}
$$
and therefore
$$
s_{\bC}=\lim_{k\to\infty}\frac{\log_2N(k)}{\log_2k}=2\log_23.
$$
Similarly to what happens in the real case, 
$s_\bC$ is also the exponent that separates the values for which
the series $\sum_{S\in\langle S^\bC_i\rangle}\|S\|^{-s}$ diverges from those for
which it diverges, where the sum is extended to all
elements of the semigroup freely generated by the $S^\bC_i$.
Note that in this case the relation between the Hausdorff
dimension of the Sierpinski gasket and the norms' growth
rate is the following: $2\dim_H\boS_3=s_\bC$.
%
\vskip .35cm
For thorough surveys on the Sierpinski gasket and especially on the
more challenging Apollonian gasket we refer the reader to the book by 
A.A. Kirillov~\cite{Kir07}, the series of papers by Lagarias, Mallows, 
Wilks and Yan~\cite{G03,G05,G06} and the recent article by 
Sarnak~\cite{Sar11}.

The present paper is structured in the following way.

In Section~\ref{sec:norms} we generalize Boyd's arguments on the asymptotics
of the sequence of radii of Soddy's circles in a Apollonian gasket and use 
them to obtain similar results on the asymptotics of norms of matrices 
in subsemigroups $S\subset M_n(K)$, $K=\bR,\bC$,
by introducing a sufficient condition for the existence of 
$\lim_{k\to\infty}\log N_S(k)/\log k$, where $N_S(k)$ is the number of
matrices in $S$ whose norm is not larger than $k$, and
relating this limit to the critical exponent of a natural zeta-function
defined on $S$.

In Section~\ref{sec:fract} we consider the action induced by these 
semigroups on the corresponding real or complex projective
spaces and study, in particular but significant cases, the relation, 
observed above in case of the Sierpinski gasket, between the critical 
exponent of the semigroup and the Hausdorff (for $n=2$) 
or box dimension (for $n\geq3$)
of the limit set of a point under its action.

We use Section~\ref{sec:defs} below to 
introduce the main concepts, notations and definition used 
throughout the paper and to state the main results of the paper.
%
\section{Notations, definitions and main results.}
\label{sec:defs}
{\bf Matrices and Norms.} We endow the vector space $M_n(K)$ of all $n\times n$ matrices
with coefficients in $K$ with the max norm, namely, given a matrix $M=(M^i_j)$,
$$
\|M\|=\max_{i,j=1,\dots,n}\left\{\left|M^i_j\right|\right\}.
$$
We denote by $B^n_r\subset M_n(K)$ the closed ball of radius 
$r>0$ in this norm.
Note that this norm is not sub-multiplicative but rather
%
\begin{equation}
  \label{eq:PQ}
  \sup_{P,Q\in M_n(K)}\frac{\|PQ\|}{\|P\|\|Q\|}=n\,.
\end{equation}
%
%
%
%
%
Since in finite dimension all norms are equivalent, the 
main results of the paper will not depend on this particular choice.

\vskip .35cm\noindent
{\bf The multi-indices semigroup.} We denote by $\cI^m$ the infinite $m$-ary tree of multi-indices 
of integers ranging from 1 to $m$ defined as follow.
The root of the tree is the number 0. The $m$ children ({\em 1-indices})
of 0 are the integers from 1 to $m$. Their children ({\em 2-indices}) 
are the ordered pairs $1i,\dots,mi$ and so on recursively for the 
{\em k-indices}, $k>2$.
We denote by $\cI^m_k$ the set of all $k$-indices of $\cI^m$. 
Since we will use them often, we denote by $\cD^m_\ell$, $\ell\geq0$,
the set of all {\em diagonal} multi-indices $I=i_1\dots i_k\in\cI^m$, 
$k\leq\ell$, i.e.
such that $i_1=\dots=i_k$, and set $\cD^m=\cup_{\ell\geq0}\cD^m_\ell$.
Similarly, we denote by $\cJ^m_\ell$, $\ell\geq2$, the set of all 
{\em next-to-diagonal} multi-indices $I=i_1i_2\dots i_k\in\cI^m$, $k\leq\ell$,
i.e. those such that $i_1\neq i_2=\dots=i_k$, and set
$\cJ^m=\cup_{\ell\geq2}\cJ^m_\ell$. 

We endow $\cI^m$ with the canonical structure of semigroup given by
$i_1\dots i_k\cdot i'_1\dots i'_{k'}=i_1\dots i_ki'_1\dots i'_{k'}$
with $0$ as identity element. We also endow $\cI^m$ with a partial order by
saying that $I\geq J$ if $I$ can be factorized as $I=LJ$ for some
multi-index $L\neq0$. 
Finally, we denote by 
$I'=i_1\dots i_k$ the $k$-index obtained from the $(k+1)$-index 
$I=i_0i_1\dots i_k$ by dropping the first index on the left.
%
%

\vskip .35cm\noindent
{\bf Gaskets of matrices.} Given $m$ matrices $A_1,\dots,A_m\in M_n(K)$,
$K=\bR$ or $\bC$, we denote by $\bA=\langle A_1,\dots,A_m\rangle$ 
the semigroup they generate, given by the intersection of all subsemigroups 
of $M_n(K)$ containing all the $A_i$ and the unit matrix.
  
In this paper we are mainly interested in the 
asymptotic growth of norms of matrices in free semigroups but, since our 
results hold for the more general case when there are relations between 
the generators, we often formulate theorems using 
the more general concept
of semigroup homomorphisms $\cI^m\to M_n(K)$. We often denote such objects
with the letter $\cA$ and use the notation
$$
A_I\bydef\cA({i_1\dots i_k})=A_{i_1}\cdot\dots\cdot A_{i_k},
$$ 
where $A_i\bydef\cA(i)$. We say that the matrices $A_1,\dots,A_m$ 
{\em generate} $\cA$.

Notice that, when there is no relation between the $A_i$, then 
there is a bijection between $\cA(\cI^m)$ and $\{\one_n\}\cup\bA$,
so when the $A_i$ are free generators it is essentially equivalent referring to 
either the semigroup homomorphism $\cA$ or the semigroup $\bA$.

%
Given any $M\in\GL_n(K)$ we denote by $\cAM$ the ``right coset'' 
map defined by $\cA_M(I)\bydef A_IM$. If $\boldsymbol{A}$ is
free then there is a bijection between $\cA_M(\cI^m)$ and 
$\{M\}\cup\bA M$, where $\bA M$ is a right coset of $\bA$. 
Clearly $\cA_{\one_n}=\cA$.
\begin{definition}
  We denote by $N_\cAM(r)$ the cardinality of the set $B^n_r\cap\cAM(\cI^m)$
  and say that $\cAM$ is a {\em $m$-gasket} (or simply a {\em gasket}) 
  if $N_\cAM(r)<\infty$ for every $r>0$. 
  We say that the gasket $\cAM$ is {\em hyperbolic} if 
  the sequence $a_k=\min_{I\in\cI^m_k}\|A_IM\|$ diverges exponentially,
  namely if there exists $\alpha>1$ such that $a_k\asymp \alpha^k$,
  where $\asymp$ means that the ratio of the terms on either side
  is bounded away from 0 and $\infty$ for all $k$. 
  When $a_k$ is slower than exponential we say that $\cAM$ is 
  {\em parabolic}\footnote{Note that $a_k$ cannot be faster than 
  exponential so this covers all possible cases.}.
\end{definition}
\begin{example}
  Every semigroup $\cA:\cI^m\to GL_n(\bC)$ whose generators have all 
  their spectrum outside the unit circle is a hyperbolic gasket.
  Consider for example the simple case of $\cA:\cI^2\to GL_2(\bC)$ with
  $$
  A_1=\begin{pmatrix}\lambda&0\cr 0&1\cr\end{pmatrix},
  A_2=\begin{pmatrix}1&0\cr 0&\lambda\cr\end{pmatrix},
  $$
  where $|\lambda|>1$. Then $\min_{|I|=2k}{\|A_I\|}=\|A_1^kA_2^k\|=\|\lambda\|^k$.
\end{example}
\begin{example}
  Every semigroup $\cA:\cI^m\to GL_n(\bC)$ whose generators have all 
  non-zero coefficients strictly larger than 1 
  is a hyperbolic gasket,
  since the norm of $k$ of such matrices will be not smaller
  than the $k$-th power of their smallest non-zero entry.
\end{example}
\begin{example}
  \label{ex:dyn}
  The (free) semigroup $\boC_2\subset SL_2(\bN)$ generated by the two 
  {\em parabolic} (i.e. with trace equal to $\pm2$) matrices
  $$
  C_1=\begin{pmatrix}1&0\cr 1&1\cr\end{pmatrix},
  C_2=\begin{pmatrix}1&1\cr 0&1\cr\end{pmatrix}
  $$
  is a parabolic gasket.
  It is a gasket because, if $M\in SL_2(\bN)$ is distinct
  from the identity, then $\|C_iM\|\geq\|M\|+1$ since $M$ has at least
  a column with two entries different from 0.
  It is parabolic because 
  $$
  \min_{I\in\cI^m_k}\{\|C_I\|\}\leq\|C^k_{1}\|=k.
  $$
  Note that $\boC_2$ contains also {\em hyperbolic} elements 
  (namely matrices $C_I$ with $|\tr C_I|>2$), e.g. 
  $$
  C_1C_2=\begin{pmatrix}2&1\cr 1&1\cr\end{pmatrix},
  $$ so that in the 
  sets $\boC_{2,k}=\{C_I,|I|=k\}$ there are some elements whose norm grows
  polynomially and some others whose norm grows exponentially with $k$,
  similarly to what happens for the cubic and Apollonian gaskets
  (see Figs.~\ref{fig:dc} and~\ref{fig:apollonian}).

  This elementary but still non-trivial example was suggested to the author by 
  I.A.~Dynnikov and was the starting point for the author's study of the 
  asymptotics of norm's growth in semigroups of linear maps in full generality.
\end{example}
\begin{example}
  Suppose that $A_1,\dots,A_m\in M_n(Z)$, where $Z=\bZ$ or $\bZ[i]$,   
  generate {\em freely} $\boldsymbol{A}$. 
  Then $\boldsymbol{A}$
  is a gasket, since
  in $M_n(Z)$ there are only finitely 
  many matrices whose norm is smaller than any fixed $r>0$
  and by the freedom hypothesis the products of any number of
  $A_i$ are all distinct.
\end{example}  
%
%
%
Note that clearly if $\bA$ is a gasket then so is also every semigroup 
conjugated to it,
as well as every semigroup obtained from it
by multiplying all elements by a constant $\lambda$
such that $|\lambda|>1$.
%
%
%
%
%
\begin{definition}
  Let $\cA$ be a $n$-gasket 
  and $M\in\GL_n(K)$.
  We call {\em zeta function} 
  of $\cAM$ the series 
  $$
  \zeta_{\cAM}(s) = \sum_{I\in\cI^m}\frac{1\phantom{^s}}{\|A_IM\|^s}.
  $$
  We call {\em exponent} of $\cAM$ the number $s_{\cAM}$ defined as follows:
  $$
  s_\cAM = \sup_{s\geq0}\{\,s\,|\,\zeta_\cAM(s)=\infty\}.
  $$
\end{definition}
Note that, if $s_\cAM<\infty$, we also have that 
$s_\cAM=\inf_{s\geq0}\{s\,|\,\zeta_\cAM(s)<\infty\}$.
\begin{example}
  \label{ex:AB}
  Let $\cA$ be the (parabolic) gasket generated by 
  $$
  A_1=\begin{pmatrix}1&1\cr 0&1\cr\end{pmatrix},\;
  A_2=A_1\in\SL_2(\bN).
  $$
  In this case $\|A_{I_k}\|=k$, so that 
  $\zeta_\cA(s) = \sum_{k\in\bN} 2^kk^{-s}$
  diverges for all $s$, i.e. $s_\cA=\infty$.

  On the contrary, let $\cB$ be the (hyperbolic) gasket generated by 
  $$
  B_1=\begin{pmatrix}2&1\cr 1&1\cr\end{pmatrix},\;
  B_2=B_1\in\GL_2(\bN).
  $$
  Then $\|B_{I_k}\|=F_{2k+2}$ where $F=(0,1,1,2,3,5,\dots)$
  is the Fibonacci sequence. Hence asymptotically 
  $\|B_{I_k}\|\simeq g^{2k}$, where $g=\frac{1+\sqrt{5}}{2}$ 
  is the golden ratio, and so 
  $\zeta_\cB(s)$ diverges or converges with 
  $\sum_{k\in\bN} 2^k g^{-2s k},$
  i.e. $s_\cB=\frac{1}{2\log_2g}$.
\end{example}
Next proposition shows that norms of matrices in $\cAM$ have 
the same asymptotic properties of those in $\cA$ for every
$M\in\GL_n(K)$:
%
\begin{proposition}
  Let $\cA$ be a $m$-gasket and $M\in\GL_n(K)$. 
  Then $\cA_M$ is a $m$-gasket and $s_\cAM=s_\cA$.
%
\end{proposition}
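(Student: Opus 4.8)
The whole statement rests on one observation: right multiplication by the fixed invertible matrix $M$ distorts norms only by uniformly bounded multiplicative factors. First I would record the two-sided estimate that makes everything work. By the inequality \eqref{eq:PQ} for the max norm, $\|A_IM\|\leq n\|M\|\,\|A_I\|$ for every $I\in\cI^m$; conversely, writing $A_I=(A_IM)M^{-1}$ and applying \eqref{eq:PQ} again gives $\|A_I\|\leq n\|M^{-1}\|\,\|A_IM\|$. Setting $c_1=(n\|M^{-1}\|)^{-1}$ and $c_2=n\|M\|$, both strictly positive and independent of $I$, I obtain
$$
c_1\|A_I\|\;\leq\;\|A_IM\|\;\leq\;c_2\|A_I\|\qquad\text{for all }I\in\cI^m.
$$

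Next I would deduce the gasket property. Right multiplication by $M$ is a bijection of $M_n(K)$ onto itself (its inverse being right multiplication by $M^{-1}$), and it carries $\cA(\cI^m)$ bijectively onto $\cA_M(\cI^m)$. If $A_IM\in B^n_r$, then the left-hand estimate gives $\|A_I\|\leq r/c_1$, so this bijection sends $B^n_r\cap\cA_M(\cI^m)$ injectively into $B^n_{r/c_1}\cap\cA(\cI^m)$; hence $N_{\cA_M}(r)\leq N_\cA(r/c_1)$, which is finite because $\cA$ is a gasket. Thus $\cA_M$ is an $m$-gasket.

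Finally I would compare the two zeta functions. Raising the two-sided estimate to the power $-s$ (for fixed $s\geq0$, which reverses the inequalities) and summing over $I\in\cI^m$ yields
$$
c_2^{-s}\,\zeta_\cA(s)\;\leq\;\zeta_{\cA_M}(s)\;\leq\;c_1^{-s}\,\zeta_\cA(s).
$$
For each fixed $s\geq0$ the factors $c_1^{-s}$ and $c_2^{-s}$ are finite and strictly positive, so $\zeta_{\cA_M}(s)$ and $\zeta_\cA(s)$ are simultaneously finite or infinite. Consequently the sets $\{s\geq0:\zeta_{\cA_M}(s)=\infty\}$ and $\{s\geq0:\zeta_\cA(s)=\infty\}$ coincide, and taking suprema gives $s_{\cA_M}=s_\cA$.

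I do not expect any serious obstacle here: the only delicate point is that the max norm is not submultiplicative, which is exactly why the factor $n$ from \eqref{eq:PQ} enters the constants $c_1$ and $c_2$; and one must be careful to count \emph{distinct} matrices, which is guaranteed by the bijectivity of right multiplication by $M$ and therefore requires no freeness hypothesis on the generators.
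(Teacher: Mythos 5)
Your proof is correct and follows essentially the same route as the paper, whose entire argument is the two-sided estimate $n\|P\|\|M\|\geq\|PM\|\geq\|P\|/(n\|M^{-1}\|)$ that you derive from \eqref{eq:PQ}. You have simply made explicit the two consequences (finiteness of $N_{\cA_M}(r)$ via the injection given by right multiplication by $M^{-1}$, and the sandwich on the zeta functions) that the paper leaves as a ``direct consequence.''
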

\begin{proof}
 It is a direct consequence of 
 $n\|P\|\|M\|\geq\|PM\|\geq\frac{\|P\|}{n\|M^{-1}\|}$.
\end{proof}
Showing that hyperbolic gaskets have a finite exponent does not require
any effort:
%
\begin{proposition}
  Let $\cA:\cI^m\to M_n(K)$ be a hyperbolic gasket. Then $s_\cA$ is finite.
\end{proposition}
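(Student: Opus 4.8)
The plan is to exploit the tension between two exponential rates. On one hand, the number of multi-indices of length $k$ is exactly $m^k$; on the other, hyperbolicity forces every matrix indexed by such a multi-index to have norm at least comparable to $\alpha^k$. Once this is made precise, the zeta function is dominated by a geometric series with ratio $m/\alpha^s$, and convergence is immediate for $s$ large.

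First I would record the one consequence of hyperbolicity that is actually needed. By definition $a_k = \min_{I \in \cI^m_k}\|A_I\| \asymp \alpha^k$ for some $\alpha > 1$; the lower half of $\asymp$ furnishes a constant $c > 0$ with $a_k \ge c\,\alpha^k$ for all $k \ge 1$. Hence, for every $k$ and every $I \in \cI^m_k$, one has $\|A_I\| \ge a_k \ge c\,\alpha^k$. Next I would count: since each of the $k$ entries of a $k$-index ranges independently over $\{1,\dots,m\}$, we have $|\cI^m_k| = m^k$. Grouping the terms of the series by the length $k=|I|$ (the root contributes the single term $1$) and inserting the uniform lower bound gives
$$
\zeta_\cA(s) = \sum_{I \in \cI^m}\frac{1}{\|A_I\|^s} \le 1 + \sum_{k \ge 1}\frac{m^k}{a_k^{\,s}} \le 1 + \frac{1}{c^s}\sum_{k \ge 1}\Bigl(\frac{m}{\alpha^s}\Bigr)^k .
$$
The geometric series on the right converges exactly when $\alpha^s > m$, i.e. when $s > \log m/\log\alpha$. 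Therefore $\zeta_\cA(s)<\infty$ for every such $s$, and consequently
$$
s_\cA \le \frac{\log m}{\log \alpha} < \infty,
$$
which is the assertion.

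There is no real obstacle here: the estimate is insensitive to whether the generators are free, since the sum runs over all multi-indices $I$ irrespective of coincidences among the $A_I$, so it applies verbatim to the homomorphism $\cA$ rather than only to the underlying semigroup $\bA$. The sole genuine input is the one-sided inequality $a_k \ge c\,\alpha^k$; the full strength of $\asymp$ is not used, and the entire content of the proof is matching the counting rate $m^k$ against the growth rate of the minimal norm.
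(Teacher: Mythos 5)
Your proof is correct and is essentially the paper's own argument: both match the count $m^k$ of $k$-indices against the hyperbolic lower bound $\|A_I\|\geq c\,\alpha^{|I|}$ and dominate $\zeta_\cA(s)$ by a geometric series with ratio $m/\alpha^s$, concluding $s_\cA\leq\log_\alpha m<\infty$. If anything, your write-up is the cleaner of the two, since the paper's displayed estimate carries sign typos in the exponents ($A^s\alpha^{ks}$ where $A^{-s}\alpha^{-ks}$ is meant) that your version avoids.
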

\begin{proof}
  Since $\cA$ is hyperbolic then $\|A_I\|\geq A\alpha^{|I|}$ for some $A>0$ and
  $\alpha>1$. Hence 
  $$
  \zeta_\cA(s)\leq \sum_{k=0}^\infty m^kA^s\alpha^{ks}
  =
  \sum_{k=0}^\infty A^s m^{k(1-s\log_m\alpha)},
  $$ 
  so that $\zeta_\cA(s)\leq\infty$ for $s\geq\log_\alpha m$, 
  namely $s_\cA\leq\log_\alpha m$. 
\end{proof}
As Example~\ref{ex:AB} shows, 
proving a similar statement for the parabolic case we must
require some growth condition on the norms of products.
\begin{definition}
  We say that a gasket $\cA:\cI^m\to M_n(K)$ is {\em fast} if 
  %
  there is a constant $c>0$
  such that
  $$
  \|A_{IJ}\| \geq c \|A_I\| \|A_{J}\|.
  $$ 
  for every multi-indices $I\in\cI^m$ and $J\in\cJ^m\cdot\cI^m$.
  We call 
  $$
  c_\bA=\inf_{\substack{I\in\cI^m\\ J\in\cJ^m\cdot\cI^m}}\frac{\|A_{IJ}\|}{\|A_I\| \|A_{J}\|}
  $$ 
  the {\em coefficient} of the gasket.
\end{definition}
%
%
\begin{example}
  Consider the parabolic and hyperbolic gaskets of Example~\ref{ex:AB}.
  Since 
  $$
  \|A_1^{k'} A_2 A_1^k\| = \|A_1^{k+k'+1}\|=1+k+k'
  $$ 
  and $\inf_{k,k'\geq1}\{(1+k+k')/(kk')\}=0$, $\cA$ is not fast.

  On the contrary, since any product of $N=k+k'$ copies of $B_{1,2}$ 
  is equal to $B^N_{1}$ and
  $$
  \|B_1^{k'}B_1^k\|=\|B_1^{k+k'}\|=F_{2k+2k'} > F_{2k'} F_{2k-2} = \|B_1^{k'}\|\|B_1^{k-1}\|,
  $$
  then $\cB$ is fast with $c_\cB\geq1$.
\end{example}
\begin{example}
  \label{ex:dynfast}
  The (parabolic) cubic gasket $\boC_2$ 
  of Example~\ref{ex:dyn} is fast.
  Indeed consider first $J=21L$, 
  with $C_L=\begin{pmatrix}a&b\cr c&d\cr\end{pmatrix}$, so that
  $$
  C_J = \begin{pmatrix}1&0\cr 1&1\cr\end{pmatrix}
        \begin{pmatrix}1&1\cr 0&1\cr\end{pmatrix}
        \begin{pmatrix}a&b\cr c&d\cr\end{pmatrix}
      =
        \begin{pmatrix}a+c&b+d\cr a+2c&b+2d\cr\end{pmatrix}.
  $$
  Clearly $\|C_J\|=\max\{a+2c,b+2d\}\leq 2\max\{a+c,b+d\}$ and therefore
  $$
  \|MC_J\|\geq\frac{1}{2}\|M\|\|C_J\| 
  $$
  for every $M\in SL_2(\bN)$. The same argument applies to $J=12L$.
  Since $\|C_1^{k'}C_1C_2C_1^k\|=k'(k+1)$, $\|C_1^{k'}\|=k'$ and
  $\|C_1C_2C_1^k\|=2k+1$ it follows at once that in fact $c_{\boC_2}=1/2$.
\end{example}
\begin{example}
  A hyperbolic gasket is not necessarily fast. Consider for instance
  $\cA:\cI^2\to GL_2(\bZ[i])$ with
  $$
  A_1 = \begin{pmatrix}2&0\cr 0&\alpha\cr\end{pmatrix},
  A_2 = \begin{pmatrix}\beta&0\cr 0&2\cr\end{pmatrix},
  $$
  where $|\alpha|,|\beta|=1$.
  $\cA$ is hyperbolic since every $A_I$, $|I|=2k$, contains an entry 
  with modulus $2^{k'}$ and $k'\geq k$.
  On the other side $\cA$ is not fast. Indeed  
  $\|A_1^kA_1A_2^{k'}\|=2^{-k}\|A_1^k\|\|A_1A_2^{k'}\|$ for all $k\leq k'$
  and so 
  $$\inf_{I\in\cI^2,J\in\cJ^2}\frac{\|A_{IJ}\|}{\|A_I\|\|A_J\|}=0.$$
\end{example}
\begin{proposition}
  If $\cA$ is a fast gasket with coefficient $c$, the gasket $\lambda\cA$ 
  is a fast gasket for every $\lambda\neq0$ with coefficient $c/|\lambda|$.
\end{proposition}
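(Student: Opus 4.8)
The plan is to reduce the statement to the defining infimum for the coefficient and to exploit the single fact that multiplication by the nonzero scalar $\lambda$ rescales every norm by exactly the same factor $|\lambda|$. First I would fix the meaning of $\lambda\cA$: following the convention recorded just before the zeta-function definition (``multiplying all elements by a constant''), $\lambda\cA$ is the map sending each multi-index $I$ to $\lambda A_I$, so that $\|(\lambda\cA)(I)\| = |\lambda|\,\|A_I\|$ for every $I\in\cI^m$. (Equivalently $\lambda\cA=\cA_{\lambda\one_n}$ with $\lambda\one_n\in\GL_n(K)$, since $\lambda A_I = A_I(\lambda\one_n)$.) With this in hand the gasket property of $\lambda\cA$ is immediate, since $\|\lambda A_I\|\leq r$ iff $\|A_I\|\leq r/|\lambda|$, so $N_{\lambda\cA}(r)=N_\cA(r/|\lambda|)<\infty$ for every $r>0$.

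Next I would compute the ratio that defines the coefficient. For any $I\in\cI^m$ and $J\in\cJ^m\cdot\cI^m$ one has
$$
\frac{\|(\lambda\cA)(IJ)\|}{\|(\lambda\cA)(I)\|\,\|(\lambda\cA)(J)\|}
= \frac{|\lambda|\,\|A_{IJ}\|}{|\lambda|\,\|A_I\|\cdot|\lambda|\,\|A_J\|}
= \frac{1}{|\lambda|}\cdot\frac{\|A_{IJ}\|}{\|A_I\|\,\|A_J\|}.
$$
Taking the infimum over the very index set appearing in the definition of $c_\bA$, and pulling the positive constant $1/|\lambda|$ out of the infimum, then yields $c_{\lambda\cA}=c_\bA/|\lambda|=c/|\lambda|$. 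In particular $c_{\lambda\cA}>0$ because $\cA$ fast forces $c>0$, which is exactly the assertion that $\lambda\cA$ is fast with the claimed coefficient.

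The only genuinely delicate point — and the one I would double-check — is the bookkeeping of the scalar factors, namely that $\lambda$ enters \emph{linearly} rather than with an exponent depending on the lengths $|I|,|J|$. This is where the chosen meaning of $\lambda\cA$ is decisive: had $\lambda\cA$ instead denoted the semigroup homomorphism \emph{generated} by the rescaled matrices $\lambda A_i$, each product $A_I$ would pick up a factor $\lambda^{|I|}$, and since $|IJ|=|I|+|J|$ these factors would cancel identically in the ratio above, leaving the coefficient unchanged. It is precisely the ``multiply each element once'' convention that produces the single surviving factor $1/|\lambda|$. Once the definition is read correctly the argument is a one-line rescaling, so I do not expect any real obstacle beyond this interpretive check.
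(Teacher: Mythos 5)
Your proof is correct and takes essentially the same route as the paper, whose entire proof is the one-line rescaling $\|\lambda A_{IJ}\|=|\lambda|\,\|A_{IJ}\|\geq c\,|\lambda|\,\|A_I\|\,\|A_{J}\|=\frac{c}{|\lambda|}\|\lambda A_I\|\,\|\lambda A_J\|$. Your interpretive choice (one factor of $\lambda$ per element, i.e.\ $\lambda\cA=\cA_{\lambda\one_n}$, rather than per generator) is exactly the convention the paper's computation uses, and your extra verifications of the gasket property and of the exact value of the infimum are harmless additions to the same argument.
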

\begin{proof}
  $\|\lambda A_{IJ}\|=|\lambda|\|A_{IJ}\|\geq c|\lambda|\|A_I\|\|A_{J'}\|=
  \frac{c}{|\lambda|}\|\lambda A_I\|\|\lambda A_J\|$
\end{proof}
Our main {\em algebraic} results on the exponent of a gasket are the following:
\begin{theorem1}[Exponent of a fast gasket]
  If $\cA:\cI^m\to M_n(K)$ is a fast gasket 
  then $0<s_\cA<\infty$.
\end{theorem1}
In the more detailed discussion in Section~\ref{sec:mu}
we also show how to build explicitly two sequences of monotonically 
decreasing functions 
$f_{\cA,k}(s)$ and $g_{\cA,k}(s)$ such that, for all $k$ from some $\bar k$ on,
$s_\cA\in[g_{\cA,k}^{-1}(1),f_{\cA,k}^{-1}(1)]$ and 
$|f_{\cA,k}^{-1}(1)-g_{\cA,k}^{-1}(1)|\leq a\log k$ for some $a>0$.
These sequences will be used in Section~\ref{sec:fract} to evaluate
analytical bounds for the exponents of a few semigroups.
%
\begin{theorem2}[Alternate characterization of the exponent of a gasket]
  Let $\cA:\cI^m\to M_n(K)$ be a gasket with $s_\cA<\infty$. Then the function
  $$
  \xi_\cA(s) = \lim_{k\to\infty}\left[\sum_{|I|=k}\|A_I\|^{-s}\right]^\frac{1}{k}
  $$
  is a well defined log-convex positive strictly decreasing function
  and satisfies the following properties:
  \begin{enumerate}
    \item $\xi_\cA(0)=m$;
    \item $\xi_\cA(s)>1$ for $s<s_\cA$;
    \item $\xi_\cA(s_\cA)=1$;
    \item $\xi_\cA(s)<1$ for $s>s_\cA$ if $\cA$ is a hyperbolic gasket;
    \item $\xi_\cA(s)=1$ for $s>s_\cA$ if $\cA$ is a fast parabolic gasket.
  \end{enumerate}
\end{theorem2}
\begin{theorem3}[Exponential growth of norms]
  Let $\cA:\cI^m\to M_n(K)$ be a gasket with $s_\cA<\infty$. Then
  $$
  \lim_{k\to\infty}\frac{\log N_\cAM(k)}{\log k} = s_\cA
  $$
  for every $M\in\GL_n(K)$.
\end{theorem3}
It is natural to ask whether this result can be made stronger, namely
whether $N_\cA(k)\asymp k^{s_\cA}$. This leads to the
following:
\begin{question}
  Based on numerical investigations, Boyd~\cite{Boy82} conjectured that, 
  in case of the Apollonian gasket, the number $N(k)$ of circles whose curvature 
  is not larger than $k$ (which, in our setting, corresponds to the number
  of Hirst matrices whose norm is not larger than $k$) were only {\em weakly}
  asymptotic to $k^{s}$, where $s$ is the dimension of the Apollonian gasket,
  namely that
  $$
  N(k)\simeq Ak^{s}\log^t(k/B)
  $$ 
  for some $A,B,t>0$.
  Recently Kontorovich and Oh~\cite{KH11} disproved this conjecture by 
  showing that actually $N(k)$ is {\em strongly} asymptotic to $k^{s}$, 
  namely 
  $N(k)\asymp k^{s}$
  for all $k\in\bN$.
  
  We pose the following question: is $N_\cA(k)$ {\em strongly}
  asymptotic to $k^{s_\cA}$ for every gasket $\cA$ with a finite
  exponent? Or is there some extra condition that must be put
  on $\cA$ to ensure this behaviour?
\end{question}

\noindent
{\bf Hausdorff dimension of limit sets of discrete subsemigroups of real and complex
projective automorphisms.} 
Recall that the projective space $K P^{n-1}$ is defined as the set of
all 1-dimensional linear subspaces of $K^n$. We denote by $[v]$ the 1-dimensional
subspace of $K$ containing $v$, so that $[\cdot]:K^n\to KP^{n-1}$ is a projection
with kernel $K\setminus\{0\}$. 
Since linear maps preserve linear subspaces, every automorphism $f:K^n\to K^n$ 
induces a projective automorphism $\psi_f:P K^{n-1}\to P K^{n-1}$ defined by
$\psi_f([v])=[f(v)]$. 

We endow $P K^{n-1}$ with the {\em round} metric tensor, 
namely the metric with constant curvature equal to 1, and with the distance
and measure induced by it. In every compact of every chart of $K P^{n-1}$ this 
distance is Lipschitz equivalent to the Euclidean distance in the chart and the 
measure is in the same measure class of the Lebesgue measure in the chart,
so that we can forget about the definition and work directly with the
Euclidean distance and Lebesgue measure in any chart.

In this paper by {\em iterated function system} (IFS) on a metric space $(X,d)$
we mean a homomorphism $F:\cI^m\to C(X)$, where $C(X)$ is the set of continuous
maps from $X$ into itself, such that the generators $f_i\bydef F(i)$, 
$i=1,\dots,m$, are non-expansive maps, namely there exist $K_i\in(0,1]$ such that
$d(f_i(x),f_i(y))\leq K_i d(x,y)$ for all $x,y\in X$. If $K_i<1$ for all $i$
then we say that the IFS is {\em hyperbolic}, otherwise that is {\em parabolic}.
Recall that to a hyperbolic IFS $F$ on $X$ it corresponds a unique compact 
set $R_F\subset X$, which is also equal to the set of limit points of the orbit
of almost every point $x\in X$ under the action of $F$,
whose Hausdorff dimension, which we denote by $\dim_H R_F$,
is often non-integer and can be evaluated through some property of the $f_i$, 
e.g. their coefficients $K_i$ or, if regular enough, their derivatives 
(e.g. see Chapter 9 of~\cite{Fal90} for more details and examples).
Finally, recall that the $f_i$ satisfy the {\em open set} condition
if there exists a relatively compact subset $V\subset X$ such that 
$V\supset\sqcup_{i=1}^m f_i(V)$. Note that this condition in particular
implies that there is no relation between the $f_i$.

In a seminal paper D. Sullivan~\cite{Sul84} 
related the Hausdorff dimension of the limit set $R_\Gamma\subset\bC P^1$ 
of a (geometrically finite) Kleinian (i.e. discrete) subgroup 
$\Gamma\subset PSL_2(\bC)$ to a critical exponent of $\Gamma$ defined
in the context of hyperbolic geometry (the recent results by 
Kontorovich and Oh quoted in Open Question~1 are in fact based on this
result). In the same spirit here we relate (partly in form of conjecture) 
the exponent $s_\bA$ of a 
sub{\em semi}{\hskip1pt}group of $SL^\pm_n(\bR)$ or $SL_n(\bC)$ 
to the Hausdorff or Box dimensions of the limit set $R_\bA\subset KP^{n-1}$, 
$K=\bR$ or $\bC$, of the subsemigroup of $PSL^\pm_n(\bR)$ or $PSL_n(\bC)$ 
naturally induced by $\bA$. This somehow complements recent results
on the geometry of residual sets of real projective IFS by Barnsley 
and Vince~\cite{BV10} and on complex projective IFS by Vince~\cite{Vin12}.

We denote by $f_1,\dots,f_m$ linear volume-preserving automorphisms of 
$K^n$, by $A_i\in SL^\pm_n(K)$ the matrices representing the $f_i$ 
in some coordinate system, by $\bA$ the semigroup generated by the $A_i$
and by $\psi_i\in PSL^\pm_n(K)$ the projective automorphism naturally
induced by $f_i$.

Our main {\em geometric} results on the exponent of a gasket are the following:
\begin{theorem4}[Semigroups of $PSL^\pm_2(\bR)$]
  Let $f_1,\dots,f_m:\Rt\to\Rt$ be such that 
  the induced maps $\psi_i\in PSL^\pm_2(\bR)$ satisfy the open 
  set condition with respect to some proper subset $V\subset\bR P^1$
  and that, in some affine chart $\varphi:\bR P^1\to\bR$, 
  the $\psi_i$ are contractions on $\varphi(\overline{V})$
  with respect to the Euclidean distance.
  Let $R_\bA=\cap_{k=1}^\infty\left(\cup_{|I|=k}\psi_{I}(V)\right)$ 
  be the corresponding residual set. Then $2\dim_HR_\bA=s_\bA$.
\end{theorem4}
The simplicity of the geometry of $\bR P^1$ suggests that the
theorem above can be extended to more general families of $f_i$, 
leading to the following:
\begin{question}
  Let $\bA\subset SL_2^\pm(\bR)$ be a free finitely generated semigroup 
  with finite exponent $s_\bA$ and $\Psi_\cA$ the corresponding free discrete 
  subsemigroup of $PSL_2^\pm(\bR)$. Under which general assumptions is 
  the Hausdorff dimension of the limit set of $\Psi_\bA$ related to $s_\bA$?
\end{question}
An identical theorem holds in the complex case:
\begin{theorem5}[Semigroups of $PSL_2(\bC)$]
  Let $f_1,\dots,f_m:\bC^2\to\bC^2$ be such that 
  the induced maps $\psi_i\in PSL_2(\bC)$ satisfy the open 
  set condition with respect to some proper subset $V\subset\bC P^1$
  and that, in some affine chart $\varphi:\bC P^1\to\bR$, 
  the $\psi_i$ are contractions on $\varphi(\overline{V})$
  with respect to the Euclidean distance.
  Let $R_\bA=\cap_{k=1}^\infty\left(\cup_{|I|=k}\psi_{I}(V)\right)$ 
  be the corresponding residual set. Then $2\dim_HR_\bA=s_\bA$.
\end{theorem5}
Even in this case there is evidence that the claim of the theorem
can be generalized further, at least to some parabolic case, namely
when the $\psi_i$ are not contractive but just non-expanding. This
is, for example, the case for the Apollonian gasket as shown in the
introduction.

For $n>2$ things get geometrically much more complicated for both $\bR$ 
and $\bC$ and we cannot claim any general result. 
In order to provide motivations for further studies of this case
and as a source of examples of non-trivial fast gaskets,
in Section~\ref{sec:fract} we introduce for free semigroups 
$\bA\subset SL^\pm_n(\bR)$ 
the notion of {\em real} and {\em complex self-projective Sierpinski 
gaskets}, the name being due to the fact that to such $\bA$'s 
it correspond the same ``cut-out'' construction of the standard Sierpinski
gasket 
and of its multi-dimensional generalizations.
In particular to each such gaskets it corresponds a curvilinear simplex
$T_\bA\subset\bR P^{n-1}$ invariant by the action induced on it by $\bA$ 
and a compact invariant set $R_\bA\subset T_\bA$ obtained by subtracting 
recursively curvilinear polyhedra from $T_\bA$. 

In particular in Section~\ref{sec:rSG}
we show that in $PSL^\pm_n(\bR)$, $n\geq3$, there exist a 
smooth 1-parameter family of fast free semigroups $\bA_t$, $t\in[1,\infty)$, 
such that: 1) $\bA_t$ is a hyperbolic IFS for $t\in(1,4)$; 2) the extremal
gasket $\bA_1$ is the (parabolic, multi-dimensional generalization of the) 
cubic gasket; 3) $\bA_2$ is the (multi-dimensional real self-projective 
generalization of the) standard Sierpinski gasket. 
Siilarly, in Section~\ref{sec:cSG} we correspondingly show that 
in $PSL_2(\bC)$ there exist a smooth 
1-parameter family of semigroups $\bA_t$, $t\in[1/5,\alpha]$, where 
$\alpha\simeq0.651$, such that: 1) $\bA_t$ is a hyperbolic IFS for 
$t\in(1/5,\alpha)$; 2) the extremal semigroup $\bA_{1/5}$ is the (parabolic) 
Apollonian gasket and $\bA_{1/2}$ is the (complex self-projective 
generalization of the) standard Sierpinski gasket.
This shows that the cubic and Apollonian gaskets are both natural parabolic 
deformations of the standard Sierpinski gasket, the first in the real
setting and the second in the complex setting.

Finally, based on numerical and analytical results in Section~\ref{sec:rSG},
we claim the following about the box dimension of the residual set: 
\begin{conjecture}
  \label{thm:conj}
  Let $\bA\subset SL^\pm_n(\bR)$ be a real projective Sierpinski gasket
  and $R_\bA$ its residual set. 
  Then, under suitable natural assumptions, 
  $$(n+1)\dim_B R_\bA\geq n s_\bA.$$
\end{conjecture}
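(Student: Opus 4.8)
The plan is to prove the lower bound by a mass-distribution argument in which the exponent $\tfrac{n}{n+1}s_\bA$ emerges from a single inequality comparing the \emph{volume} and the \emph{diameter} of each projective piece. I take the generators to be volume preserving on $\bR^{n+1}$, so that $R_\bA\subset\bR P^{n}$ and the exponents match the statement, and I place the open set condition for the $\psi_i$ among the ``natural assumptions''. Observe first that the level-$k$ pieces $\psi_I(T)$, $|I|=k$, then have disjoint interiors and lie in $\bR P^{n}$, so their total volume, which by the computation below is $\asymp\sum_{|I|=k}\|A_I\|^{-(n+1)}$, stays bounded in $k$; by Theorem~2 this forces $\xi_\cA(n+1)\le1$, i.e. $s_\bA\le n+1$, and hence the target exponent $\tfrac{n}{n+1}s_\bA\le n$ is a legitimate dimension.

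\textbf{Volume versus diameter.} Let $A_I$ have singular values $\sigma_1\ge\cdots\ge\sigma_{n+1}>0$ with $\prod_j\sigma_j=1$, and recall $\|A_I\|\asymp\sigma_1$. In an affine chart centred at the attracting direction the differential of $\psi_I$ has singular values $\sigma_2/\sigma_1\ge\cdots\ge\sigma_{n+1}/\sigma_1$, whence
\[
\mathrm{vol}\,\psi_I(T)\asymp\prod_{j=2}^{n+1}\frac{\sigma_j}{\sigma_1}=\sigma_1^{-(n+1)}\asymp\|A_I\|^{-(n+1)},\qquad \mathrm{diam}\,\psi_I(T)\asymp\frac{\sigma_2}{\sigma_1}.
\]
Since $\sigma_2$ is the second largest of $n+1$ numbers of product $1$, we have $\sigma_2^{\,n}\ge\sigma_2\sigma_3\cdots\sigma_{n+1}=\sigma_1^{-1}$, i.e. $\sigma_2\ge\sigma_1^{-1/n}$, with equality exactly when $\sigma_2=\cdots=\sigma_{n+1}$; therefore $\mathrm{diam}\,\psi_I(T)\gtrsim\|A_I\|^{-(n+1)/n}$. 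This is the heart of the matter: at fixed $\|A_I\|$ anisotropy can only \emph{enlarge} the diameter, and the conformal (equality) case is the standard Sierpinski gasket, for which the claimed inequality degenerates to the equality $(n+1)\dim_H R_\bA=ns_\bA$ recorded in the introduction.

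\textbf{A conformal measure and the reduction.} The fast condition together with the bound (\ref{eq:PQ}) makes the norms quasi-multiplicative, $\|A_{IJ}\|\asymp\|A_I\|\,\|A_J\|$ (up to the diagonal exceptions allowed in the definition of fast), so a standard Patterson--Sullivan/Gibbs construction produces a probability measure $\mu$ on $R_\bA$ with $\mu(\psi_I(T))\asymp\|A_I\|^{-s_\bA}$, the normalisation being exactly $\xi_\cA(s_\bA)=1$ from Theorem~2. Fix $\epsilon>0$ and let $\Sigma(\epsilon)$ be the antichain of indices for which $\psi_I(T)$ first attains diameter $\le\epsilon$; by the previous step $\mathrm{diam}\,\psi_I(T)\ge\|A_I\|^{-(n+1)/n}$, so $I\in\Sigma(\epsilon)$ forces $\|A_I\|^{-1}\le\epsilon^{n/(n+1)}$ and hence $\mu(\psi_I(T))\lesssim\epsilon^{\frac{n}{n+1}s_\bA}$. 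Write $N_\epsilon$ for the least number of $\epsilon$-balls covering $R_\bA$ and $L(\epsilon)$ for the largest number of $\Sigma(\epsilon)$-pieces meeting one such ball. Since the $\Sigma(\epsilon)$-pieces cover $R_\bA$ we get $\mu(B(x,\epsilon))\lesssim L(\epsilon)\,\epsilon^{\frac{n}{n+1}s_\bA}$, and from $1=\mu(R_\bA)\le N_\epsilon\max_x\mu(B(x,\epsilon))$,
\[
\dim_B R_\bA=\lim_{\epsilon\to0}\frac{\log N_\epsilon}{\log(1/\epsilon)}\ \ge\ \frac{n}{n+1}\,s_\bA-\limsup_{\epsilon\to0}\frac{\log L(\epsilon)}{\log(1/\epsilon)}.
\]

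\textbf{The main obstacle.} Everything now reduces to a \emph{bounded multiplicity} estimate, $\log L(\epsilon)=o(\log(1/\epsilon))$, and this is precisely where the difficulty lies and where disjointness alone is useless. The pieces $\psi_I(T)$ are long thin curvilinear simplices of volume $\asymp\|A_I\|^{-(n+1)}$ but diameter only $\asymp\|A_I\|^{-(n+1)/n}$, so \emph{a priori} arbitrarily many of them may thread through a single $\epsilon$-ball without overlapping; their volumes lie far below $\epsilon^{n}$, so no volume count bounds $L(\epsilon)$, and the same obstruction defeats the equivalent attempt to extract a large $\epsilon$-separated subset of the attracting points $\psi_I(p)$, $I\in\Sigma(\epsilon)$. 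Controlling $L(\epsilon)$ seems to require genuinely geometric information about how the nested simplices of the Sierpinski construction are arranged in $\bR P^{n}$ --- for instance a bounded-turning or transversality (cone) condition forcing the long axes of neighbouring pieces to point in separated directions --- and this is surely the content hidden in ``suitable natural assumptions''. For $n=1$ there is a single transverse direction, every piece is automatically conformal, $L(\epsilon)=O(1)$, and the estimate above reproduces Theorems~4 and~5; it is only the multi-directional anisotropy present for $n\ge2$ that makes the bound genuinely hard and keeps it at the level of a conjecture.
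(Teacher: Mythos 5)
First, a point of order: the statement you were asked to prove is a \emph{conjecture} in the paper, and the paper itself offers no proof of it. What the paper provides as support is entirely different in nature from your argument: (i) a rigorous verification for the special affine subfamily $\bA_{a,b}$ of Section~\ref{sec:aSG}, where the generators are upper triangular with $\|M_I\|=1$ for all $I$, so that $\zeta_{\bA_{a,b},k}$ is computable in closed form, $s_{\bA_{a,b}}$ solves (\ref{eq:sab}), and a direct comparison of that equation with the Falconer--Lammering formulas (\ref{eq:FL1})--(\ref{eq:FL2}) yields $2s_{\bA_{a,b}}\leq3\dim_B R_{a,b}$ on $T_1\cup T_2$; and (ii) the numerical data of Tables~\ref{tab:aSG} and~\ref{tab:rSG}. (Your normalization, generators acting on $\bR^{n+1}$, is indeed the one consistent with the introduction's claim $\dim_B\boC_3\geq2s/3$ and with those tables, so that reading is fine.) Your mass-distribution reduction is therefore not ``the paper's approach done differently''; it is a genuinely different and more ambitious attack, and, as you candidly state, it does not close the conjecture but trades it for an unproved multiplicity bound on $L(\epsilon)$.

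Beyond that admitted gap, two of your intermediate steps fail precisely in the parabolic (first-kind) case that motivates the conjecture. First, your volume estimate $\mathrm{vol}\,\psi_I(T)\asymp\|A_I\|^{-(n+1)}$ rests on reading off the derivative of $\psi_I$ at one point, i.e.\ on bounded distortion, which parabolic words do not have: for the cubic gasket, in the chart of Section~\ref{sec:cubic} one computes
$$
\psi_1^k(u,v)=\left(\frac{k-(k-1)u}{(k+1)-ku},\ \frac{v}{(k+1)-ku}\right),
$$
whose image of $T$ is a triangle of area $\asymp k^{-2}=\|C_1^k\|^{-2}$, whereas the singular values of $C_1^k$ are $\asymp(k,1,k^{-1})$ and your formula predicts $\asymp k^{-3}$; the paper's own volume proposition records exactly this degeneration (the upper exponent drops to $a_n=n-1$ for first-kind gaskets). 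Second, the Gibbs/Patterson--Sullivan measure with $\mu(\psi_I(T))\asymp\|A_I\|^{-s_\bA}$ and uniform constants requires quasi-multiplicativity of norms over \emph{all} factorizations, but the fast condition only controls $\|A_{IJ}\|$ for $J\in\cJ^m\cdot\cI^m$, and on diagonal words quasi-multiplicativity is simply false for parabolic gaskets: $\|C_1^{2k}\|=2k$ while $\|C_1^k\|^2=k^2$. So the measure you invoke does not exist with the stated bounds for the gaskets the conjecture is chiefly about (for hyperbolic Sierpinski gaskets the construction is plausible). In short: your reduction to a transversality/bounded-multiplicity statement is a reasonable research program and correctly isolates a hard geometric core, but as a proof it has these three holes, and it neither matches nor replaces the paper's (explicitly computational, subfamily-only) evidence.
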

%
\section{Asymptotic growth of norms}
\label{sec:norms}
\subsection{The case $m=1$.}
\label{sec:m=1}
Let us briefly discuss the exponent of a gasket in the trivial case $m=1$, 
since these results will be useful later in this section. 

Here $\cI=\bN$, $\cA$ is generated by a single matrix $A\in M_n(K)$, $K=\bR$ or
$\bC$, 
and $\cA(k)=A^k$. 
In order for $\cA$ to be a gasket then it is necessary and sufficient that
$A$ has an eigenvalue with modulus larger than 1 (in which case it 
will be a hyperbolic gasket) or that has a single eigenvalue
of modulus 1 and its spectral norm larger than 1 (in which case it 
will be a parabolic gasket).

If $\cA$ is hyperbolic then $\|A^k\|$ grows exponentially with $k$
and so $s=0$ is the only exponent that can make the series
$\zeta_\cA(s)=\sum_{k\in\bN}\|A^k\|^{-s}$ divergent, i.e. $s_\cA=0$.

If $\cA$ is parabolic then its generator $A$ has only eigenvalues
of modulus 1 and therefore its norm grows as some polynomial of degree
$d<n$.
Hence 
$\zeta_\cA(s)$ diverges for $s\leq 1/d$
and is finite for $s>1/d$, i.e. $s_\cA=1/d$. 

Now consider the number $N_\cA(r)$ of powers of $A$ 
whose norm is not larger than $r$. When $\|A^k\|$ is a polynomial of order $d$ 
their number grows as $r^{1/d}$, so that the limit
$\lim_{k\to\infty}\log N_\cA(r)/\log r=1/d$ equals $s_\cA$.
When $\|A^k\|$ grows exponentially then $N_\cA(r)$ grows logarithmically
so that, again, $\lim_{k\to\infty}\log N_\cA(r)/\log r=0$
equals $s_\cA$. Incidentally, this proves Theorems 1 and 3 for $m=1$.

\subsection{The case $m>1$.}
When there is more than one generator things are qualitatively different: 
the number of terms of order $k$ (i.e. products of $k$ generators)
increases exponentially and there can be coexistence of polynomial and 
exponential growths of norms for terms of the same order. E.g. 
we already pointed out this behaviour for the $\boC_2$ semigroup
in Example~\ref{ex:dynfast} and the same happens for all semigroups
$\boC_n$ (see Section~\ref{sec:cubic}).
%
%

Following Boyd's arguments in~\cite{Boy73b} we are able to find upper and
lower bounds for $\zeta_\cAM(s)$ using only the series of the norms of the
``diagonal'' terms $A_DM$, $D\in\cD^m$, and of those ``next-to-diagonal''
ones $A_JM$, $J\in\cJ^m$.
The key point of the next arguments is the following elementary recursive
re-writing of the zeta function: 
\begin{equation}
  \label{eq:SAM}
  \zeta_{\cAM}(s) = 
  \sum_{D\in\cD^m} \frac{1\phantom{^s}}{\|A_DM\|^s} + \sum_{J\in\cJ^m} \zeta_{\cA_{ A_JM}}(s).
\end{equation}
\subsubsection{A fundamental set of inequalities}
Using (\ref{eq:SAM}) we can now write the following fundamental inequality:
%
\begin{proposition}
  Let $\cA:\cI^m\to M_n(K)$ be a fast gasket with coefficient
  $c_\cA$ and let $J\in\cJ^m\cdot\cI^m$
  and $I\in\cI^m$.
  Then the following inequalities hold:
  \begin{gather}
    \label{eq:SAAA}
    n^{-s}\|A_J\|^{-s} \zeta_\cA(s)
    \leq 
    \zeta_{\cA_{A_J}}(s) 
    \leq 
    c^{-s}_\cA\|A_J\|^{-s}\zeta_\cA(s) \\
    \label{eq:SAMinLeft}
    \zeta_{\cA_{A_I}}(s)
    \geq
    \nu_{\cA_{A_I}}(s) + n^{-s} \mu_{\cA_{A_I}}(s)\zeta_\cA(s)\\
    \label{eq:SAMinRight}
    \zeta_{\cA_{A_I}}(s)
    \leq
    \nu_{\cA_{A_I}}(s) + c^{-s}_\cA \mu_{\cA_{A_I}}(s)\zeta_\cA(s),
  \end{gather}
  %
  where
  $\displaystyle
  \nu_{\cAM}(s)=\sum_{D\in\cD^m}\frac{1}{\|A_DM\|^{s}}$
  and 
  $\displaystyle
  \mu_{\cAM}(s)=\sum_{J\in\cJ^m}\frac{1}{\|A_JM\|^{s}}$.
\end{proposition}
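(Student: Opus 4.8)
The plan is to treat inequality~(\ref{eq:SAAA}) as the fundamental building block and then obtain~(\ref{eq:SAMinLeft}) and~(\ref{eq:SAMinRight}) by feeding it into the recursive identity~(\ref{eq:SAM}). Since every series below has nonnegative terms, all the rearrangements and term-by-term comparisons are legitimate as identities in $[0,+\infty]$, so I need not worry about convergence while establishing the bounds; convergence or divergence is simply inherited from $\zeta_\cA(s)$ through the multiplicative factors.

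First I would prove~(\ref{eq:SAAA}). By definition $\zeta_{\cA_{A_J}}(s)=\sum_{I\in\cI^m}\|A_IA_J\|^{-s}=\sum_{I\in\cI^m}\|A_{IJ}\|^{-s}$. For the upper bound, since $J\in\cJ^m\cdot\cI^m$ the fast condition applies to every pair $(I,J)$ with $I\in\cI^m$ and gives $\|A_{IJ}\|\geq c_\cA\|A_I\|\|A_J\|$; taking $s$-th powers, reciprocals, and summing over $I$ yields $\zeta_{\cA_{A_J}}(s)\leq c_\cA^{-s}\|A_J\|^{-s}\sum_{I}\|A_I\|^{-s}=c_\cA^{-s}\|A_J\|^{-s}\zeta_\cA(s)$. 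For the lower bound I would use instead the multiplicativity bound $\|A_{IJ}\|\leq n\|A_I\|\|A_J\|$ coming from~(\ref{eq:PQ}), valid for all $I$; the same manipulation then produces $\zeta_{\cA_{A_J}}(s)\geq n^{-s}\|A_J\|^{-s}\zeta_\cA(s)$.

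Next I would apply the recursion~(\ref{eq:SAM}) with $M=A_I$, obtaining $\zeta_{\cA_{A_I}}(s)=\sum_{D\in\cD^m}\|A_DA_I\|^{-s}+\sum_{J\in\cJ^m}\zeta_{\cA_{A_JA_I}}(s)$. The first sum is precisely $\nu_{\cA_{A_I}}(s)$ by definition. For the second, the crucial observation is that for $J\in\cJ^m$ and $I\in\cI^m$ the concatenation $JI$ again lies in $\cJ^m\cdot\cI^m$, since its first two symbols are those of $J$ and therefore differ; hence~(\ref{eq:SAAA}) applies verbatim with $J$ replaced by $JI$, giving $n^{-s}\|A_{JI}\|^{-s}\zeta_\cA(s)\leq\zeta_{\cA_{A_{JI}}}(s)\leq c_\cA^{-s}\|A_{JI}\|^{-s}\zeta_\cA(s)$. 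Summing over $J\in\cJ^m$ and recalling that $\mu_{\cA_{A_I}}(s)=\sum_{J\in\cJ^m}\|A_JA_I\|^{-s}$, I obtain $n^{-s}\mu_{\cA_{A_I}}(s)\zeta_\cA(s)\leq\sum_{J}\zeta_{\cA_{A_{JI}}}(s)\leq c_\cA^{-s}\mu_{\cA_{A_I}}(s)\zeta_\cA(s)$; adding the diagonal contribution $\nu_{\cA_{A_I}}(s)$ to all three members yields exactly~(\ref{eq:SAMinLeft}) and~(\ref{eq:SAMinRight}).

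The only genuinely delicate point is the bookkeeping of the index sets. The fast hypothesis only furnishes a lower bound for second factors in $\cJ^m\cdot\cI^m$, so the whole argument hinges on the combinatorial fact that this set is stable under right multiplication by $\cI^m$, i.e. that $JI\in\cJ^m\cdot\cI^m$ whenever $J\in\cJ^m$ and $I\in\cI^m$; this is exactly what allows~(\ref{eq:SAAA}) to be reused inside the summands of~(\ref{eq:SAM}). The edge case $I=0$ (where $\cA_{A_I}=\cA$) is harmless, since then $JI=J\in\cJ^m\subset\cJ^m\cdot\cI^m$ and the two chains of inequalities collapse to consistency statements for $\zeta_\cA$ itself. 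I expect no analytic obstacle beyond this, precisely because the positivity of all terms removes any convergence issue from the comparisons.
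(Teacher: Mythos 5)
Your proof is correct and follows essentially the same route as the paper's: establish~(\ref{eq:SAAA}) from~(\ref{eq:PQ}) (lower bound) and the fast-gasket definition (upper bound), then plug it into the recursion~(\ref{eq:SAM}) with $M=A_I$ and sum over $J\in\cJ^m$ to obtain~(\ref{eq:SAMinLeft}) and~(\ref{eq:SAMinRight}). Your explicit remarks on the stability of $\cJ^m\cdot\cI^m$ under right concatenation and on working with nonnegative terms in $[0,+\infty]$ only make precise what the paper leaves implicit.
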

\begin{proof}
  The left side of (\ref{eq:SAAA}) is a direct consequence of  
  (\ref{eq:PQ}), its right side of the definition of fast gasket.
  The starting point to prove inequalities 
  (\ref{eq:SAMinLeft},\ref{eq:SAMinRight}) is (\ref{eq:SAM}), from which
  we get 
  $$
  \zeta_{\cA_{A_I}}(s) = 
  \nu_{\cA_{A_I}}(s) + \sum_{J\in\cJ^m} \zeta_{\cA_{A_{JI}}}(s).
  $$
  Applying the right side of (\ref{eq:SAAA}) to the summation above we get that
  $$
  \sum_{J\in\cJ^m} \zeta_{\cA_{A_{JI}}}(s)
  \leq
  \sum_{J\in\cJ^m}\frac{1}{c^s_\cA\|A_{JI}\|^s}\zeta_\cA(s) 
  =
  \frac{1}{c^s_\cA}\mu_{\cA_{A_I}}(s)\zeta_\cA(s). 
  $$
  Hence (\ref{eq:SAMinRight}) follows and analogously it is proven (\ref{eq:SAMinLeft}).
\end{proof}
\begin{remark}
  \label{rmk:nu}
  The function $\nu_\cAM$ has the same complexity of the zeta functions of $1$-gaskets.
  Indeed let $\cA_i=\langle A_i\rangle$ be the 1-gaskets generated by the $m$ 
  generators of $\cA$. Then $\nu_\cAM(s)=\sum_{1\leq i\leq m}\zeta_{\cA_i}(s)$.
  The ultimate idea of this section is to exploit this fact to find upper
  and lower bounds for $\zeta_\cA$ using the much simpler $\nu_\cA$.
\end{remark}
\begin{remark}
  \label{rmk:munu}
  For finite $s$, $\mu_{\cA}$ and $\nu_{\cA}$, converge or diverge
  together. Indeed if $J=iD\in\cJ^m$ then $J'=D\in\cD^m$ and 
  $$
  \frac{\|A_D\|}{n\|A_i^{-1}\|}\leq\|A_{iD}\|\leq n\|A_i\|\|A_D\|,
  $$
  so that
  $$
  (m-1)n^{-s}\min_{1\leq i\leq m}\|A_i\|^{-s}\nu_\cA(s)
  \leq 
  \mu_\cA(s)
  \leq 
  (m-1)n^s\max_{1\leq i\leq m}\|A_i^{-1}\|^s\nu_\cA(s).
  $$
  The same is true for $s\to\infty$ after some finite number of 
  terms (those of ``too small'' norm) is removed from the two series. 
  For similar reasons $\nu_\cAM$ and $\mu_\cAM$ converge
  or diverge together with $\nu_\cA$ for every $M\in\GL_n(K)$.
\end{remark}
Let us now examine closely the two inequalities 
(\ref{eq:SAMinLeft},\ref{eq:SAMinRight}) 
for $I=0$. The first one becomes 
\begin{equation}
  \label{eq:left}
  \zeta_\cA(s) \geq \nu_\cA(s) + n^{-s} \mu_\cA(s) \zeta_\cA(s)
\end{equation}
%
Since we are going to use this inequality to get lower bounds for 
$\zeta_\cA(s)$, we can proceed without loss of generality 
by assuming that $\zeta_\cA(s)<\infty$.
Then we get that, for $n^{-s} \mu_\cA(s)<1$,
\begin{equation}
  \label{eq:lb}
  \zeta_\cA(s)\geq \frac{\nu_\cA(s)}{1-n^{-s} \mu_\cA(s)}.
\end{equation}
Analogously the right one becomes 
\begin{equation}
  \label{eq:right}
\zeta_\cA(s) \leq \nu_\cA(s) + \c^{-s}_\cA \mu_\cA(s) \zeta_\cA(s).
\end{equation}
This time though, since we are going to use this inequality to provide 
upper bounds to the zeta function, we need to truncate the infinite series
to a finite sum to ensure we are dealing with finite numbers.
A natural recursive definition, inspired by the structure of (\ref{eq:SAM}), is
$$
\begin{tabular}{lcl}
  $\zeta^{0}_\cAM(s)$&$=$&$\nu^0_\cAM(s)$\cr
  $\zeta^{\ell}_\cAM(s)$&$=$&$\displaystyle \nu^{\ell}_\cAM(s)
  + \sum_{J\in\cJ^m_{\ell+1}} \zeta^{\ell+1-|J|}_{\cA_{A_JM}}(s),\;\ell\geq1$\cr
\end{tabular}
,
$$
where $\nu^{\ell}_\cAM$ is the necklace sum truncated at the order $\ell$.
\begin{proposition}
  Consider the sets $\cP^m_\ell\subset\cI^m$ recursively defined as
  $$
  \begin{tabular}{lcl}
    $\cP^m_0$&$=$&$\cD^m_0,$\cr
    $\cP^m_{\ell}$&$=$&$\displaystyle\cD^m_\ell
    \bigcup\left[\bigcup_{J\in\cJ^m_{\ell+1}}\cP^m_{\ell+1-|J|}\cdot J\right],
    \;\ell\geq1.$
  \end{tabular}
  $$
  Then the following properties hold:
  \begin{enumerate}
    \item $\cP^m_{\ell}\subset\cP^m_{\ell+1}$ for all $\ell\geq0$;
    \item $\cup_{\ell\geq0}\cP^m_\ell=\cI^m$;
    \item $\zeta^\ell_\cAM(s)=\sum_{I\in\cP^m_\ell}\|A_IM\|^{-s}.$
  \end{enumerate}
\end{proposition}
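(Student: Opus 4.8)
The plan is to reduce everything to a single combinatorial fact about the free monoid $\cI^m$, which I will call \textbf{unique suffix factorization}: every non-diagonal multi-index $I=i_1\dots i_k$ (one for which $i_1=\dots=i_k$ fails) admits a \emph{unique} factorization $I=KJ$ with $J\in\cJ^m$ and $K\in\cI^m$ (possibly $K=0$). To see this, let $r$ be the start of the terminal maximal constant run of $I$, i.e. the smallest $r$ with $i_r=\dots=i_k$; non-diagonality forces $r\geq2$, and then $J\bydef i_{r-1}i_r\dots i_k$ satisfies $i_{r-1}\neq i_r=\dots=i_k$, so $J\in\cJ^m$, while $K\bydef i_1\dots i_{r-2}$. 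Conversely, any next-to-diagonal suffix $i_t\dots i_k$ must have its constant tail coincide with the terminal run and be preceded by a distinct letter, which pins $t=r-1$; hence $J$, and therefore $K$, are determined by $I$. Since concatenation in $\cI^m$ is right-cancellative, for each fixed $J$ the map $K\mapsto KJ$ is injective. This lemma is the engine behind all three properties, and establishing its uniqueness clause is really the only non-formal step.

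\textbf{Properties 1 and 2.} For Property 1 I would argue $\cP^m_\ell\subset\cP^m_{\ell+1}$ by induction on $\ell$, the base $\cP^m_0=\{0\}\subset\cP^m_1$ being immediate. For the step, $\cD^m_\ell\subset\cD^m_{\ell+1}$ is clear, and for each $J\in\cJ^m_{\ell+1}\subset\cJ^m_{\ell+2}$ one has $\ell+1-|J|<\ell$ (as $|J|\geq2$), so the inductive hypothesis gives $\cP^m_{\ell+1-|J|}\subset\cP^m_{\ell+2-|J|}$ and hence $\cP^m_{\ell+1-|J|}\cdot J\subset\cP^m_{\ell+2-|J|}\cdot J\subset\cP^m_{\ell+1}$; taking the union over $J$ and combining with the diagonal part yields the inclusion. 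For Property 2 the inclusion $\cup_\ell\cP^m_\ell\subset\cI^m$ is built into the definition, and for the reverse I would induct on the length $|I|$. If $I$ is diagonal then $I\in\cD^m_{|I|}\subset\cP^m_{|I|}$; otherwise factor $I=KJ$ via the lemma, note $|K|=|I|-|J|<|I|$, and obtain from the inductive hypothesis that $K\in\cP^m_{\ell'}$ for some $\ell'$. By Property 1, $K\in\cP^m_{\ell+1-|J|}$ as soon as $\ell\geq\ell'+|J|-1$; for such $\ell$ (note $\ell\geq|J|-1\geq1$) we conclude $I=KJ\in\cP^m_{\ell+1-|J|}\cdot J\subset\cP^m_\ell$.

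\textbf{Property 3.} This is the main statement, which I would prove by induction on $\ell$ for all base matrices $M$ simultaneously. The base case $\ell=0$ holds because $\cP^m_0=\{0\}$ and $\zeta^0_\cAM(s)=\nu^0_\cAM(s)=\|A_0M\|^{-s}$. For the step I unfold the defining recursion
$$
\zeta^{\ell}_\cAM(s)=\nu^{\ell}_\cAM(s)+\sum_{J\in\cJ^m_{\ell+1}}\zeta^{\ell+1-|J|}_{\cA_{A_JM}}(s),
$$
apply the inductive hypothesis to each inner term (legitimate since $\ell+1-|J|<\ell$) with base matrix $A_JM$, and use $A_K(A_JM)=A_{KJ}M$ to rewrite it as $\sum_{K\in\cP^m_{\ell+1-|J|}}\|A_{KJ}M\|^{-s}$. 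Together with $\nu^\ell_\cAM(s)=\sum_{D\in\cD^m_\ell}\|A_DM\|^{-s}$, this exhibits $\zeta^\ell_\cAM(s)$ as the sum of $\|A_IM\|^{-s}$ over the \emph{multiset} obtained from $\cD^m_\ell$ and the pieces $\cP^m_{\ell+1-|J|}\cdot J$. The final step, which is exactly where the lemma is indispensable and which I expect to be the only genuine obstacle, is to check that this union is disjoint so that the multiset sum collapses to $\sum_{I\in\cP^m_\ell}\|A_IM\|^{-s}$: every element $KJ$ of an off-diagonal piece is non-diagonal (it contains the descent at the head of $J$) and so avoids $\cD^m_\ell$, while $KJ_1=K'J_2$ with $J_1,J_2\in\cJ^m$ forces $J_1=J_2$ and $K=K'$ by the uniqueness clause (combined with right-cancellation within a single piece). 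Once this disjointness bookkeeping is in place, Property 3 follows immediately.
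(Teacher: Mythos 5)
Your proof is correct, and it follows the same overall architecture as the paper's (inductions that mirror the recursive definitions; your Property 1 argument is essentially identical to the paper's), but it differs in two substantive ways that are worth recording. For Property 2 the paper factors a non-diagonal index \emph{completely} into next-to-diagonal blocks, $I=J_1\cdots J_k$, declares this factorization ``clearly unique,'' and then shows by building up that all products of $k$ such blocks lie in $\cP^m_{kK+1}$ with $K=\max_i|J_i|$; you instead strip a \emph{single} next-to-diagonal suffix, $I=KJ$, and induct on word length, invoking Property 1 to place the shorter prefix at the right level. Your route quietly repairs a small imprecision in the paper: an index such as $123$ is neither diagonal nor a product of elements of $\cJ^m$ (any such product has length $\geq 2$ in each factor), so the paper's dichotomy is literally false unless one allows a leading diagonal block, $123=1\cdot 23$; your suffix-stripping never needs the prefix $K$ to be of any special form, so this case is handled automatically. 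For Property 3 the paper writes $\zeta^\ell_\cAM(s)=\sum_{I\in\cG^\ell}\|A_IM\|^{-s}$ and identifies $\cG^\ell=\cP^m_\ell$ by observing that both satisfy the same recursion — implicitly assuming that the recursion never produces the same index twice, i.e.\ that the sum is genuinely a sum over a \emph{set}. You make this multiset-versus-set issue explicit and discharge it with the uniqueness clause of your suffix-factorization lemma, which shows the pieces $\cD^m_\ell$ and $\cP^m_{\ell+1-|J|}\cdot J$, $J\in\cJ^m_{\ell+1}$, are pairwise disjoint; this is exactly the content hidden in the paper's ``clearly unique.'' In exchange, the paper's build-up argument yields a concrete bound on the level at which a given index first appears ($I\in\cP^m_{kK+1}$), which your existence argument does not track; your single lemma, on the other hand, is the cleaner foundation, since it simultaneously powers Property 2 and the disjointness needed for Property 3, and is airtight where the paper is informal.
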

\begin{proof}
  1. We prove this by induction assuming that $\cP^m_{k}\subset\cP^m_{k+1}$
  for all $k\leq\ell-1$. Now, if $I\in\cP^m_{\ell}$ then either $I\in\cD^m_\ell$ 
  or $I=PJ$ with $J\in\cJ^m_{\ell+1}$ and $P\in\cP^m_{\ell+1-|J|}$. 
  In the first case $I\in\cP^m_{\ell+1}$ since $\cD^m_\ell\subset\cD^m_{\ell+1}$.
  In the second case we have that $\ell+1-|J|\leq \ell-1$ since every element
  of $\cJ^m$ has at least rank two and therefore, by the inductive hypothesis, 
  $\cP^m_{\ell+1-|J|}\subset\cP^m_{\ell+1-|J|+1}$.
  Hence, by definition, $I\in\cP^m_{\ell+2-|J|}\cdot J\subset\cP^m_{\ell+1}$.

  2. Notice first of all that every index $I\in\cI^m$ either belongs to
  $\cD^m$ (and so to some $\cP^m_\ell$)
  or it can be factored out as a product $I=J_1\cdots J_k$ with 
  $J_i\in\cJ^m$. This factorization simply consists in singling out the 
  patterns of the form $i_1\neq i_2=i_3=\dots=i_p$ inside $I$ and is 
  clearly unique. Let $K=\max_{1\leq i\leq k}|J_i|$. By construction
  $J_i\in\cP^m_K$ for every $1\leq i\leq k$. Then 
  $\cP^m_{2K+1}\supset\bigcup_{J\in\cJ^m_{2K+2}}\cP^m_K\cdot J$ 
  contains all products $J_i J_j$, $1\leq i,j\leq k$ and similarly
  $\cP^m_{kK+1}$ contains all possible products of $k$ of the $J_i$,
  namely $I\in\cP^m_{kK+1}$.

  3. Let us write $\zeta^{\ell}_\cAM(s)=\sum_{I\in\cG^\ell}\|A_IM\|^{-s}$.
  Then if $I\in\cG^\ell$ either $\|A_IM\|^{-s}$ appears in $\nu^\ell_\cAM(s)$, 
  in which case $I\in\cD^m_\ell$ by definition, or in $\zeta^{\ell+1-|J|}_{\cA_{A_JM}}(s)$, 
  in which case $I=KJ$ with $K\in\cG^{\ell+1-|J|}$. This is exactly
  the rule that defines recursively the $\cP^m_\ell$. Since 
  we also have by definition of $\zeta^0_\cAM(s)$ that $\cG^0=\cD^m_0$ 
  it follows that $\cG^\ell=\cP^m_\ell$.
\end{proof}
\begin{corollary}
  Let $\cA$ be a fast gasket. Then the 
  $\zeta^{\ell}_\cAM(s)$ satisfy the following properties:
  %
  \begin{align}
    &\zeta^{\ell}_\cAM(s)\leq\zeta^{\ell+1}_\cAM(s)\hbox{ for all }\ell\geq0;
    \label{eq:ZAm} \\
    &\lim_{\ell\to\infty}\zeta^{\ell}_\cAM(s)=\zeta_\cAM(s);\label{eq:ZAmLim}\\
    &n^{-s}\|A_J\|^{-s} \zeta^\ell_\cA(s)
    \leq 
    \zeta^\ell_{\cA_{A_J}}(s) 
    \leq 
    c^{-s}_\cA\|A_J\|^{-s}\zeta^\ell_\cA(s);\label{eq:SAMinRightEll} \\
    &\zeta^{\ell}_{\cA_{A_I}}(s)\leq \nu_{\cA_{A_I}}(s) 
      + \c^{-s}_\cA \mu_{\cA_{A_I}}(s) \zeta^{\ell}_\cA(s).\label{eq:SAAAEll}
  \end{align}
  %
\end{corollary}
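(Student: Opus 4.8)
The plan is to reduce all four statements to the set-theoretic description $\zeta^{\ell}_\cAM(s)=\sum_{I\in\cP^m_\ell}\|A_IM\|^{-s}$ established in the preceding proposition, so that each inequality becomes a termwise estimate summed over the explicit index set $\cP^m_\ell$. Once the combinatorics of the $\cP^m_\ell$ is in hand, every bound here is just the corresponding full-series bound restricted to a finite-depth subset of $\cI^m$. I would prove the four items in the order $(\ref{eq:ZAm})$, $(\ref{eq:ZAmLim})$, $(\ref{eq:SAMinRightEll})$, $(\ref{eq:SAAAEll})$, since the last one consumes the two preceding it.

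First I would dispose of $(\ref{eq:ZAm})$ and $(\ref{eq:ZAmLim})$ together. All summands $\|A_IM\|^{-s}$ are nonnegative, and by the preceding proposition $\cP^m_\ell\subset\cP^m_{\ell+1}$; hence the partial sums over this nested family are nondecreasing in $\ell$, which is exactly $(\ref{eq:ZAm})$. Because $\cup_{\ell\geq0}\cP^m_\ell=\cI^m$, this is an increasing exhaustion of $\cI^m$, so for a series of nonnegative terms the monotone limit of partial sums equals the total sum, $\lim_{\ell\to\infty}\sum_{I\in\cP^m_\ell}\|A_IM\|^{-s}=\sum_{I\in\cI^m}\|A_IM\|^{-s}=\zeta_\cAM(s)$, valid whether the total is finite or $+\infty$; this is $(\ref{eq:ZAmLim})$.

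Next I would prove $(\ref{eq:SAMinRightEll})$ as the truncated analogue of $(\ref{eq:SAAA})$, using the very same two pointwise inequalities but now under the sum over $\cP^m_\ell$ instead of over all of $\cI^m$. For each $I\in\cP^m_\ell$, inequality $(\ref{eq:PQ})$ gives $\|A_{IJ}\|^{-s}\geq n^{-s}\|A_I\|^{-s}\|A_J\|^{-s}$, while the fast-gasket hypothesis (applicable because $J\in\cJ^m\cdot\cI^m$) gives $\|A_{IJ}\|^{-s}\leq c_\cA^{-s}\|A_I\|^{-s}\|A_J\|^{-s}$. Summing over $\cP^m_\ell$ and recognizing $\sum_{I\in\cP^m_\ell}\|A_{IJ}\|^{-s}=\zeta^{\ell}_{\cA_{A_J}}(s)$ and $\sum_{I\in\cP^m_\ell}\|A_I\|^{-s}=\zeta^{\ell}_\cA(s)$ yields the two-sided bound.

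Finally, for $(\ref{eq:SAAAEll})$ I would expand $\zeta^{\ell}_{\cA_{A_I}}(s)=\sum_{P\in\cP^m_\ell}\|A_{PI}\|^{-s}$ along the recursion $\cP^m_\ell=\cD^m_\ell\cup\bigcup_{J\in\cJ^m_{\ell+1}}\cP^m_{\ell+1-|J|}\cdot J$. The diagonal part contributes $\nu^{\ell}_{\cA_{A_I}}(s)\leq\nu_{\cA_{A_I}}(s)$, while the block indexed by each $J\in\cJ^m_{\ell+1}$ equals $\zeta^{\,\ell+1-|J|}_{\cA_{A_{JI}}}(s)$; applying the right-hand bound of $(\ref{eq:SAMinRightEll})$, legitimate since $JI\in\cJ^m\cdot\cI^m$, bounds this by $c_\cA^{-s}\|A_{JI}\|^{-s}\zeta^{\,\ell+1-|J|}_\cA(s)$, and summing over $J\in\cJ^m_{\ell+1}\subset\cJ^m$ together with $\sum_J\|A_{JI}\|^{-s}\leq\mu_{\cA_{A_I}}(s)$ completes the estimate. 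The one point that demands care, and the main obstacle, is the index mismatch: the block terms carry truncation depth $\ell+1-|J|$ rather than $\ell$, so before collecting them I must invoke the monotonicity $(\ref{eq:ZAm})$ (here $\ell+1-|J|\leq\ell-1$ because every $J\in\cJ^m$ has length at least two) to replace $\zeta^{\,\ell+1-|J|}_\cA(s)$ by the larger $\zeta^{\ell}_\cA(s)$. This dependence is precisely what dictates the chosen order of proof.
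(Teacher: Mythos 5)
Your proof is correct and follows essentially the same route as the paper's: items (\ref{eq:ZAm}) and (\ref{eq:ZAmLim}) from the nestedness and exhaustion properties of the $\cP^m_\ell$, item (\ref{eq:SAMinRightEll}) by summing the termwise bounds from (\ref{eq:PQ}) and the fast-gasket condition over the truncated index set, and item (\ref{eq:SAAAEll}) by expanding along the recursion defining the truncation, applying the right-hand side of (\ref{eq:SAMinRightEll}), and then using the monotonicity (\ref{eq:ZAm}) to replace $\zeta^{\ell+1-|J|}_{\cA}$ by $\zeta^{\ell}_{\cA}$. The only cosmetic difference is that you phrase the expansion through the sets $\cP^m_\ell$ rather than directly through the recursive definition of $\zeta^\ell_\cAM$, which are equivalent by point 3 of the preceding proposition.
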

\begin{proof}
  (\ref{eq:ZAm},\ref{eq:ZAmLim}) are a direct consequence of points 1. and 2. 
  of the previous proposition.
  (\ref{eq:SAMinRightEll}) is a direct consequence of (\ref{eq:PQ}) (left) 
  and of the definition of fast gasket (right). 
  Using the rhs of (\ref{eq:SAMinRightEll}) and then (\ref{eq:ZAm}) we get that
  $$
  \zeta^{\ell}_\cAM(s)
  \leq
  \nu^{\ell}_\cAM(s)+c^{-s}\sum_{J\in\cJ^m_{\ell+1}}\|A_JM\|^{-s}\zeta^{\ell+1-|J|}_{\cA}(s)
  \leq 
  \nu^{\ell}_\cAM(s)+c^{-s}\mu^{\ell+1}_\cAM(s)\zeta^{\ell}_{\cA}(s)
  $$
  from which (\ref{eq:SAAAEll}) follows.
\end{proof}
%
%
%
Using the monotonicity in $\ell$ of 
$\nu^{\ell}_\cAM(s)$ and $\mu^{\ell}_\cAM(s)$ and setting $M=\one_n$ we get that,
in particular,
$$
\zeta^{\ell}_\cA(s) \leq \nu_\cA(s) + \c^{-s}_\cA \mu_\cA(s) \zeta^\ell_\cA(s).
$$
From this we deduce that, when $\mu_\cA(s)<c^s_\cA$,
$\zeta^{\ell}_\cA(s)\leq \frac{\nu_\cA(s)}{1-\c^{-s}_\cA \mu_\cA(s)}$
for all $\ell$ and therefore, finally,
\begin{equation}
  \label{eq:ub}
  \zeta_\cA(s) \leq \frac{\nu_\cA(s)}{1-c^{-s}_\cA \mu_\cA(s)}.
\end{equation}
In the next section we will use these bounds to build bounds
for $s_\cA$.
\subsubsection{The exponent of a fast gasket}
\label{sec:mu}
%
%
The idea of next theorem comes from the following observation, valid in
the particular case when all generators $A_i$ of $\cA$ have non-negative 
coefficients and norms larger than the maximum between 1 and the inverse 
of the coefficient $c_\cA$ of the gasket.

Consider (\ref{eq:lb}). The function $g_\cA(s) = n^{-s}\mu_\cA(s)$
is clearly monotonically decreasing with $s$, defined on 
$(0,\infty)$ and $g(0,\infty)=(0,\infty)$. 
In particular the set $g_\cA(s)<1$ is not empty and
therefore (\ref{eq:lb}) holds for $s>s_g$, with $s_g=g_\cA^{-1}(1)$.
When we let $s\to s_g^+$ the right hand side of (\ref{eq:lb}) goes to 
infinity so that $\zeta_\cA(s_g)$ diverges, namely $s_\cA\geq s_g$. 

Now consider (\ref{eq:ub}). The function $f_\cA(s)=c^{-s}\mu_\cA(s)$ 
satisfies the same properties listed above for $g_\cA$. 
Let $s_f=f_\cA^{-1}(1)$. Then $f_\cA(s)<1$ for $s>s_f$ 
so that (\ref{eq:lb}) holds.
This proves that $\zeta_\cA(s)<\infty$ for all $s>s_f$,
namely $s_f\geq s_\cA$.

This simple argument not only grants us that $s_\cA$ is 
finite but also provides for it non-trivial lower and upper 
bounds.
In order to obtain a similar result in full generality  
we need to refine (\ref{eq:ub}). 
This will lead us to refine also (\ref{eq:lb}) and to generate 
a pair of sequences converging to $s_\cA$ from the left and 
from the right at logarithmic speed. 
The idea is to apply over and over recursively first the inequalities 
(\ref{eq:SAMinLeft},\ref{eq:SAMinRight}) and then the inequality 
(\ref{eq:SAAA}) to the truncated zeta function.

The starting point is the sequence of sets of multi-indices
$\cQ_{\cA,k}$ built as follows. We define $\cQ^0_{\cA,k}=\cJ^m$. 
Then we consider the sets recursively defined as
$$
\cQ^\ell_{\cA,k} = 
  \left\{J\in\cQ^{\ell-1}_{\cA,k}\,|\,\|A_J\|> k\right\}
  \bigcup
  \cJ^m\cdot\left\{J\in\cJ^m\,|\,J\in\cQ^{\ell-1}_{\cA,k}, \|A_J\|\leq k\right\}
$$
with $\ell\geq1$.
%
\begin{proposition}
  For every gasket $\cA$ and every $k>0$ there exists a $\bar\ell$ such that
  $\cQ^{\bar\ell}_{\cA,k}=\cQ^{\bar\ell+1}_{\cA,k}$.
\end{proposition}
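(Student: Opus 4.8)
The plan is to show that the recursion can act only on the \emph{small} multi-indices of $\cQ^\ell_{\cA,k}$, namely those $J$ with $\|A_J\|\le k$, that each such $J$ is deleted and replaced by strictly longer multi-indices, and that the gasket hypothesis caps how long a small multi-index can be; once the small ones are exhausted the recursion is stationary.

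First I would rewrite the update rule as $\cQ^\ell=\{J\in\cQ^{\ell-1}:\|A_J\|>k\}\cup(\cJ^m\cdot S_{\ell-1})$, where $S_{\ell-1}=\{J\in\cQ^{\ell-1}:\|A_J\|\le k\}$ collects the small multi-indices. The first clause freezes every index of norm $>k$: such an index is copied unchanged and, having norm $>k$, can never re-enter any $S_{\ell'}$. Hence from one stage to the next the only indices that are created or destroyed are the small ones, and each small $J$ is deleted and replaced by the family $\cJ^m\cdot\{J\}$, whose members $J'J$ all satisfy $|J'J|=|J'|+|J|>|J|$ and are thus strictly longer.

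Next I would track the number of next-to-diagonal factors. Since frozen indices are never small, every small index of $\cQ^\ell$ lies in $\cJ^m\cdot S_{\ell-1}$, so $S_\ell\subseteq\cJ^m\cdot S_{\ell-1}$; iterating from $S_0\subseteq\cJ^m$ gives $S_\ell\subseteq(\cJ^m)^{\ell+1}$. Hence every surviving small index at stage $\ell$ is a product of $\ell+1$ elements of $\cJ^m$, and since each factor has length at least $2$ it has total length at least $2(\ell+1)$. I would then invoke the finiteness forced by the gasket hypothesis — that only finitely many multi-indices $J$ satisfy $\|A_J\|\le k$ — to bound the length of every small index by a finite $L^*$. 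Choosing $\bar\ell$ with $2(\bar\ell+1)>L^*$ forces $S_{\bar\ell}=\emptyset$; then $\cJ^m\cdot S_{\bar\ell}=\emptyset$ and every index of $\cQ^{\bar\ell}$ already has norm $>k$, so the first clause returns all of $\cQ^{\bar\ell}$ and $\cQ^{\bar\ell+1}=\cQ^{\bar\ell}$, as required.

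The step carrying the whole argument — and the one needing care — is the uniform length bound $L^*$, i.e.\ that norm-bounded products cannot involve arbitrarily many next-to-diagonal factors. For \emph{free} generators this is immediate, being exactly the defining finiteness $N_\cA(k)<\infty$ read across the bijection between multi-indices and matrices. When relations are present it is genuinely the crux: a next-to-diagonal word $J_0$ with $A_{J_0}$ of finite order would place every power $J_0^r$ in $S_{r-1}$ and destroy termination, so this is precisely where the structural hypotheses on $\cA$ must be exploited to guarantee that $\{J\in\cI^m:\|A_J\|\le k\}$ is finite.
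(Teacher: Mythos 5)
Your proof is correct and takes essentially the same route as the paper's own: small-norm indices are deleted and replaced by strictly longer ones, so finiteness of the set of indices of norm at most $k$ forces stabilization after finitely many steps — your containment $S_\ell\subseteq(\cJ^m)^{\ell+1}$ merely makes quantitative what the paper states informally as ``replacing \dots with indices of higher order.'' Your closing caveat is also justified, and is in fact more careful than the paper, whose proof asserts the finiteness ``by definition'': the gasket condition bounds the number of \emph{matrices} in $B^n_k\cap\cA(\cI^m)$, not of multi-indices, so finiteness of $\{J\in\cI^m:\|A_J\|\le k\}$ is automatic only for free (or otherwise restricted) $\cA$, exactly as you note.
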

\begin{proof}
  The only thing that the recursive algorithm does is replacing the indices 
  of $\cQ^{\ell}_{\cA,k}$ corresponding to matrices with norm not larger 
  than $k$ with indices of higher order. The set $\cQ^{\ell+1}_{\cA,k}$
  thus obtained can still contain indices corresponding to matrices 
  with norm not larger than $k$ but in a finite number of steps all
  such indices will disappear because, by definition, there is only
  a finite amount of them. Hence there exists a finite $\bar\ell$
  such that $\|A_I\|>k$ for all $I\in\cQ^{\bar\ell}_{\cA,k}$.
  Then the algorithm leaves $\cQ^{\bar\ell}_{\cA,k}$ unchanged.
\end{proof}
\begin{definition}
  We use the notation
  $\cQ_{\cA,k}=\cQ^{\bar\ell}_{\cA,k}$ and, correspondingly, 
  \begin{align}
    f^\ell_{\cA,k}(s) &= c^{-s}_\cA\sum_{J\in\cQ^\ell_{\cA,k}}\|A_J\|^{-s},&
    f_{\cA,k}(s) &= c^{-s}_\cA\sum_{J\in\cQ_{\cA,k}}\|A_J\|^{-s}\\
    g^\ell_{\cA,k}(s) &= n^{-s}\sum_{J\in\cQ^\ell_{\cA,k}}\|A_J\|^{-s},&
    g_{\cA,k}(s) &= n^{-s}\sum_{J\in\cQ_{\cA,k}}\|A_J\|^{-s}.
  \end{align}
\end{definition}
Note that $f_{\cA,0}(s)=f_\cA(s)$ and $g_{\cA,0}(s)=g_\cA(s)$.
Next proposition though shows that, 
for $k$ big enough, $f_{\cA,k}$ and $g_{\cA,k}$ have a nicer 
behaviour than $f_\cA$ and $g_\cA$ do for a general $\cA$.
\begin{proposition}
  \label{thm:bigK}
  For every fast gasket $\cA$ there exists a 
  $\bar k$ and a $\gamma_{\cA,k}<\infty$ such that $f_{\cA,k}$ and $g_{\cA,k}$ 
  are strictly decreasing continuous functions of $s$ defined in 
  $(\gamma_{\cA,k},\infty)$ and with image $(0,\infty)$ for all $k>\bar k$.
\end{proposition}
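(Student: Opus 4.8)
The plan is to treat $f_{\cA,k}$ and $g_{\cA,k}$ as genuine positive Dirichlet series and to read off all four asserted properties from the position of their common abscissa of convergence. First I would absorb the prefactors into the exponentials, writing $f_{\cA,k}(s)=\sum_{J\in\cQ_{\cA,k}}(c_\cA\|A_J\|)^{-s}$ and $g_{\cA,k}(s)=\sum_{J\in\cQ_{\cA,k}}(n\|A_J\|)^{-s}$. By the previous proposition every $J\in\cQ_{\cA,k}$ has $\|A_J\|>k$, so choosing $\bar k\geq\max\{1,1/c_\cA\}$ makes every base $c_\cA\|A_J\|$ and $n\|A_J\|$ strictly larger than $1$ as soon as $k>\bar k$. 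This is precisely the purpose of the threshold $\bar k$: it rescues monotonicity in the awkward regime $c_\cA<1$, where the bare factor $c_\cA^{-s}$ is itself increasing.

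With all bases exceeding $1$, the routine properties are immediate. Each summand $(c_\cA\|A_J\|)^{-s}$ is strictly decreasing in $s$, so the sum is strictly decreasing wherever it is finite; for $s\geq s_1>\gamma_{\cA,k}$ one has $(c_\cA\|A_J\|)^{-s}\leq(c_\cA\|A_J\|)^{-s_1}$, so the Weierstrass $M$-test gives uniform convergence on $[s_1,\infty)$ and hence continuity on the whole half-line of convergence $(\gamma_{\cA,k},\infty)$. Here $\gamma_{\cA,k}$ is the common abscissa of convergence of the two series, common because the factors $c_\cA^{-s}$ and $n^{-s}$ are finite and nonzero and so do not affect convergence. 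Domination by the least base $b=\min_J c_\cA\|A_J\|>1$ (a true minimum, since the gasket hypothesis leaves only finitely many $J$ with norm below any bound) gives $f_{\cA,k}(s)\leq b^{-(s-s_1)}f_{\cA,k}(s_1)\to0$ as $s\to\infty$, and likewise for $g_{\cA,k}$.

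What remains, and what I expect to be the only real obstacle, is to locate $\gamma_{\cA,k}$ and to prove that the functions actually diverge as $s\to\gamma_{\cA,k}^+$, the sole mechanism by which the image can exhaust all of $(0,\infty)$, since a generic positive Dirichlet series may well converge at its abscissa. I would show that $\gamma_{\cA,k}$ equals $\sigma^*$, the abscissa of convergence of $\mu_\cA$, and that $D_{\cA,k}(s):=\sum_{J\in\cQ_{\cA,k}}\|A_J\|^{-s}$ diverges there. For $\gamma_{\cA,k}\leq\sigma^*$ I would track the refinement that produces $\cQ_{\cA,k}$ out of $\cJ^m$ quantitatively: by the fast-gasket inequality (valid since every $J$ occurring lies in $\cJ^m\cdot\cI^m$), replacing a term $\|A_J\|^{-s}$ by $\sum_{J'\in\cJ^m}\|A_{J'J}\|^{-s}$ costs at most a factor $f_\cA(s)=c_\cA^{-s}\mu_\cA(s)$, so after the finite number $\bar\ell$ of steps supplied by the previous proposition one gets $D_{\cA,k}(s)\leq\max\{1,f_\cA(s)\}^{\bar\ell}\mu_\cA(s)$, which is finite for every $s>\sigma^*$ (finiteness of $\bar\ell$ is exactly what lets this survive the range where $f_\cA(s)>1$). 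For the reverse inequality and the divergence, the key structural remark is that an element $ij^{p}$ of a one-generator tail is never eligible for replacement once its norm exceeds $k$, so each such tail sits inside $\cQ_{\cA,k}$; picking the index $j$ that realizes $\sigma^*$ and using $\|A_iA_j^{p}\|\asymp\|A_j^{p}\|$, the corresponding sub-series of $D_{\cA,k}$ is $\sum_p\|A_j^{p}\|^{-s}$ up to constants, which diverges exactly at $s=\sigma^*$ (as a harmonic series $\sum_p p^{-1}$ when $A_j$ is parabolic of polynomial degree $1/\sigma^*$, and as $\sum_p 1$ at $s=0$ when every generator is hyperbolic and $\sigma^*=0$). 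Hence $D_{\cA,k}(\sigma^*)=\infty$, so $\gamma_{\cA,k}=\sigma^*\leq 1<\infty$, and monotone convergence forces $f_{\cA,k}(s),g_{\cA,k}(s)\to\infty$ as $s\to\gamma_{\cA,k}^+$. Combined with continuity, strict monotonicity and the limit $0$ at $+\infty$, the intermediate value theorem yields image $(0,\infty)$.
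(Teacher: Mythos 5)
Your proof is correct, and while it reaches the same basic reduction as the paper---controlling $f_{\cA,k}$ and $g_{\cA,k}$ through one-generator asymptotics---it implements every key step differently. For monotonicity, continuity and decay you pick $\bar k\geq\max\{1,1/c_\cA\}$ so that every base $c_\cA\|A_J\|$, $n\|A_J\|$ exceeds $1$ and argue directly on the series; the paper instead takes $\bar k=n^2\max_i\|A_i^{-1}\|\max_i\|A_i\|$ and works through a comparison series, observing that $\sum_{J\in\cQ_{\cA,k}}\|A_J\|^{-s}$ decomposes exactly into finitely many truncated functions $\mu^{(k)}_{\cA_{A_P}}$ ($P$ running over the replaced indices), which by Remark~\ref{rmk:munu} are squeezed between truncated one-generator series $\nu^{(k)}$, whose terms are $s$-powers of numbers uniformly below $1$ for $k>\bar k$. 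Your inductive replacement-cost bound $D_{\cA,k}(s)\leq\max\{1,f_\cA(s)\}^{\bar\ell}\mu_\cA(s)$ is a clean substitute for that exact decomposition and has the merit of showing precisely where fastness and the finiteness of $\bar\ell$ are used. The most substantial difference is your final step: you prove that the series diverges as $s\to\gamma_{\cA,k}^{+}$ (via the tails $ij^{p}$ that persist in $\cQ_{\cA,k}$), which is genuinely needed for the image to be all of $(0,\infty)$, since a positive Dirichlet series may converge at its abscissa; the paper asserts the image claim without spelling this out (it is recoverable from its lower bound by $\nu^{(k)}$, because one-generator zeta functions diverge at their abscissas, but the point is left implicit). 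One caveat: your two-sided claim $\|A_iA_j^{p}\|\asymp\|A_j^{p}\|$ requires $A_i$ invertible for the lower bound (exactly the assumption underlying Remark~\ref{rmk:munu}); fortunately only the upper bound $\|A_iA_j^{p}\|\leq n\|A_i\|\|A_j^{p}\|$, i.e.\ (\ref{eq:PQ}), is needed for the divergence direction, so nothing is lost.
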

\begin{proof}
  By construction $f_{\cA,k}(s)$ and $g_{\cA,k}(s)$ are proportional to
  each other and, respectively with constants $c^{-s}_\cA$ and $n^{-s}$,  
  to the sum of a finite number of functions $\mu^{(k)}_{\cA_{A_I}}$
  defined as the series $\mu_{\cA_{A_I}}$ from which all terms 
  with norm smaller than $k$ have been subtracted. 

  By Remarks~\ref{rmk:nu} and~\ref{rmk:munu} then
  $$
  n^{-s}\min_{1\leq i\leq m}\|A_i\|^{-s}\nu^{(k)}_{\cA_{A_I}}(s)
  \leq 
  \mu^{(k)}_{\cA_{A_I}}(s)
  \leq
  (m-1)n^s\max_{1\leq i\leq m}\|A_i^{-1}\|^s\sum_{I\in\cG}\nu^{(k)}_{\cA_{A_I}}(s),
  $$
  where $\cG\subset\cI^m$ is some finite set of indices and the series 
  $\nu^{(k)}_{\cA_{A_I}}$ is equal to $\nu_{\cA_{A_I}}$ minus those terms with 
  indices $D\in\cD^m$ such that  $\|A_{JI}\|\leq k$ for all $J\in\cJ^m$ 
  with $J'=D$.
  In particular then also $f_{\cA,k}$ and $g_{\cA,k}(s)$ are bounded by
  a finite sum of series $\nu^{(k)}_{\cA_{A_I}}(s)$ and so,
  by the case $m=1$ of Theorem 1 discussed in Section~\ref{sec:m=1}, 
  they are finite on some connected non-empty 
  interval $(\gamma_{\cA,k},\infty)$. 

  Now let $\bar k = n^2\max_{1\leq i\leq m}\|A_i^{-1}\|\max_{1\leq i\leq m}\|A_i\|$.  
  For every $k>\bar k$ we have that, if the index $DI$ appears 
  in the series $\nu^{(k)}_{\cA_{A_I}}$ and $J$ is one of the indices
  such that $\|A_{JI}\|>k$,
  $$
  \|A_{DI}\|
  \geq
  \frac{\|A_{JI}\|}{n\|A_i\|}
  >
  \frac{k}{n\|A_i\|}
  >
  n\max_{1\leq i\leq m}\|A_i^{-1}\|,
  $$
  namely 
  $$
  \frac{n\displaystyle\max_{1\leq i\leq m}\|A_i^{-1}\|}{\|A_{DI}\|}\leq
  \frac{n^2\displaystyle\max_{1\leq i\leq m}\|A_i^{-1}\|\max_{1\leq i\leq m}\|A_i\|}{k}<1.
  $$
  This means that all summands in $f_{\cA,k}$ and $g_{\cA,k}$ are $s$-powers 
  of numbers uniformly bounded from above by some number strictly smaller 
  than 1 and therefore $f_{\cA,k}$ and $g_{\cA,k}$ are 
  strictly decreasing with $s$ and their image equals $(0,\infty)$.
\end{proof}
%
%
\begin{theorem1}
  Let $\cA$ be a fast gasket of $m$ matrices. Then $0<s_\cA<\infty$
  and both $s_{g,k}=g_{\cA,k}^{-1}(1)$ and $s_{f,k}=f_{\cA,k}^{-1}(1)$ 
  are uniquely defined and converge, respectively from left and right, 
  to $s_\cA$ as $k\to\infty$ with speed at least logarithmic. 
\end{theorem1}
\begin{proof}
  
  We start by showing that
  \begin{equation}
    \label{eq:zone}
    \zeta_\cA(s)\geq h_{\cA,k}(s) + g_{\cA,k}(s)\zeta_\cA(s)
  \end{equation}
  and
  \begin{equation}
    \label{eq:ztwo}
    \zeta_\cA(s)\leq h_{\cA,k}(s) + f_{\cA,k}(s)\zeta_\cA(s)
  \end{equation}
  for every $k$, where $h_{\cA,k}$ is some positive continuous function 
  that plays no role here.
  We prove it in detail for the most complicated case, namely the second one,
  when we need to pass through the partial sums $\zeta^{\ell}_\cA$.
  First we notice that (\ref{eq:SAAAEll}) writes as
  \begin{equation}
    \label{eq:T1.1}
  \zeta^\ell_{\cA_{A_I}}(s)\leq h^0_{\cA_{A_I},k} + f^0_{\cA_{A_I},k}(s)\zeta^\ell_\cA(s)
  \end{equation}
  after putting $h^0_{\cA_{A_I},k}(s)=\nu_{\cA_{A_I}}(s)$. 
  Now we write the definition of $\zeta^\ell_{\cA_{A_I}}(s)$ splitting
  the last term in two
  %
  \begin{equation}
    \label{eq:split}
    \zeta^{\ell}_{\cA_{A_I}}(s)
    = \nu^{\ell}_{\cA_{A_I}}(s) 
    + \sum_{\substack{J\in\cJ^m_{\ell+1}\\ \|A_J\|\leq k}} \zeta^{\ell+1-|J|}_{\cA_{A_{JI}}}(s) 
    + \sum_{\substack{J\in\cJ^m_{\ell+1}\\ \|A_J\|>k}} \zeta^{\ell+1-|J|}_{\cA_{A_{JI}}}(s)
  \end{equation}
  and apply (\ref{eq:SAAAEll}) to the first term in $\zeta$ and 
  (\ref{eq:T1.1}) to the second, getting
  $$
  \zeta^{\ell}_{\cA_{A_I}}(s) \leq \nu_{\cA_{A_I}}(s) 
  + \sum_{\substack{J\in\cJ^m\\ \|A_J\|\leq k}}\left[\nu^{\ell}_{\cA_{A_{JI}}}(s)
  +c^{-s}_\cA\mu_{\cA_{A_{JI}}}(s)\zeta^\ell_\cA(s)\right] 
  +c^{-s}_\cA\sum_{\substack{J\in\cJ^m\\ \|A_J\|>k}}\|A_{JI}\|^{-s}\zeta^\ell_\cA(s)
  $$
%
  $$
  =h^1_{\cA_{A_I},k}(s)+c^{-s}_\cA\sum_{L\in\cQ^1_{\cA,k}}\|A_L\|^{-s}\zeta^\ell_\cA(s)
  =h^1_{\cA_{A_I},k}(s)+f^1_{\cA_{A_I},k}(s)\zeta^\ell_\cA(s),
  $$
  where we set
  $h^1_{\cA_{A_I},k}(s)=h^0_{\cA_{A_I},k}(s)+\sum_{\substack{J\in\cJ^m\\ \|A_J\|\leq k}}\nu_{\cA_{A_{JI}}}(s)$.
%
 
  By repeating recursively this procedure we get that 
  $$
  \zeta^{\ell}_\cA(s) \leq h^r_{\cA,k}(s) + f^r_{\cA,k}(s) \zeta_\cA(s)
  $$
  for all $r\geq0$. Since the $\cQ^r_{\cA,k}$ 
  stabilize into the $\cQ_{\cA,k}$, we proved that
  $$
  \zeta^{\ell}_\cA(s) \leq h_{\cA,k}(s) + f_{\cA,k}(s) \zeta_\cA(s).
  $$
  for every $\ell$ and therefore (\ref{eq:ztwo}) follows;
  (\ref{eq:zone}) 
  is proved analogously.

  By Proposition~\ref{thm:bigK}, for $k$ big enough the points $s_{f,k}=f_{\cA,k}^{-1}(1)$
  and $s_{g,k}=g_{\cA,k}^{-1}(1)$ are uniquely defined and strictly between
  $0$ and $\infty$.
  An argument analog to the one at the beginning of the section
  shows that these inequalities imply that $s_{g,k}\leq s_\cA\leq s_{f,k}$ 
  for every such $k$.
  In particular $0<s_\cA<\infty$ since every $s_{f,k}$ is finite
  and every $s_{g,k}$ is larger than 0.
  
  To prove the last part of the theorem we point out that
  $g_{\cA,k}(s)=c_\cA^sn^{-s}f_{\cA,k}(s)$ and that, for any $s'>0$,
  $$
  f_{\cA,k}(s)=\frac{m-1}{c^s_\cA}\sum_{J\in\cQ_{\cA,k}}\|A_J\|^{-s}
  >
  (c_\cA k)^{s'}\frac{m-1}{c^{-s-s'}_\cA}\sum_{J\in\cQ_{\cA,k}}\|A_J\|^{-s-s'}
  = (c_\cA k)^{s'}f_{\cA,k}(s').
  $$
  Hence
  $$
  1
  =
  g_{\cA,k}(s_{g,k}) 
  = 
  c^{s_{g,k}}n^{-s_{g,k}}f_{\cA,k}(s_{g,k})
  >
  $$
  $$
  >
  c^{s_{g,k}}_\cA n^{-s_{g,k}}(c_\cA k)^{s_{f,k}-s_{g,k}}f_{\cA,k}(s_{f,k})
  =
  c^{s_{f,k}}_\cA n^{-s_{g,k}}k^{s_{f,k}-s_{g,k}}
  $$
  so that
  $$
  0>s_{f,k}(\log k+\log c_\cA)-s_{g,k}(\log k+\log n)
  $$
  and finally
  $$
  0 \leq s_{f,k}-s_{g,k} < s_{g,k}\left(\frac{\log k+\log n\phantom{_\cA}}{\log k+\log c_\cA}-1\right)
  \leq s_\cA\frac{\log n-\log c_\cA}{\log k+\log c_\cA}.
  $$
\end{proof}
\subsubsection{An alternate characterization of $s_\cA$}
The exponent $s_\cA$ of a $m$-gasket $\cA$ can also be extracted
from the asymptotics of the partial sums of the $\|A_I\|^{-s}$ over 
same-rank multi-indices, namely from the sequence of functions
$\zeta_{\cA,k}(s)=\sum_{I\in\cI^m_k}\|A_I\|^{-s}$.
\begin{lemma}
  The sequence of analytical log-convex monotonically decreasing functions 
  $\zeta^{1/k}_{\cA,k}(s)$ converges pointwise for every $s\in[0,\infty)$ to a 
  bounded continuous log-convex monotonically decreasing function $\xi_\cA(s)$.
\end{lemma}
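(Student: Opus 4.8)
The plan is to combine three ingredients: elementary closure properties of finite sums of log-linear functions, Fekete's subadditive (here superadditive) lemma to get pointwise convergence, and standard facts about limits of convex functions to transfer the regularity to $\xi_\cA$.

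First I would record the properties of the prelimit functions. For each fixed $I$ the map $s\mapsto\|A_I\|^{-s}=e^{-s\log\|A_I\|}$ is positive, analytic, strictly decreasing and log-linear, its logarithm being the affine function $-s\log\|A_I\|$ (using $\|A_I\|>0$). Since a finite sum of positive log-convex functions is again log-convex, $\zeta_{\cA,k}(s)=\sum_{I\in\cI^m_k}\|A_I\|^{-s}$ is analytic, strictly decreasing and log-convex; hence so is its positive $k$-th root $\zeta^{1/k}_{\cA,k}$, whose logarithm is $\tfrac1k\log\zeta_{\cA,k}$. Monotonicity also yields the uniform bound $\zeta_{\cA,k}(s)\le\zeta_{\cA,k}(0)=m^k$ for $s\ge0$, so that $\zeta^{1/k}_{\cA,k}(s)\le m$ for all $k$ and all $s$.

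The heart of the argument is pointwise convergence. Concatenation is a bijection $\cI^m_k\times\cI^m_l\to\cI^m_{k+l}$ and $A_{JL}=A_JA_L$, so using $\|PQ\|\le n\|P\|\|Q\|$ from (\ref{eq:PQ}) and the monotonicity of $x\mapsto x^{-s}$ I obtain the supermultiplicative estimate
$$
\zeta_{\cA,k+l}(s)=\sum_{|J|=k}\sum_{|L|=l}\|A_JA_L\|^{-s}\ge n^{-s}\zeta_{\cA,k}(s)\,\zeta_{\cA,l}(s).
$$
Setting $b_k(s)=\log\zeta_{\cA,k}(s)-s\log n$ this reads $b_{k+l}(s)\ge b_k(s)+b_l(s)$, i.e. $(b_k(s))_k$ is superadditive for each fixed $s$. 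Fekete's lemma then gives $\tfrac1k\log\zeta_{\cA,k}(s)\to\sup_k\tfrac{b_k(s)}{k}=:\log\xi_\cA(s)$, the term $-\tfrac{s\log n}{k}$ being negligible in the limit; the supremum is finite because $\tfrac1k\log\zeta_{\cA,k}(s)\le\log m$, and it is $>-\infty$ because $b_1(s)=\log\big(\sum_{i=1}^m\|A_i\|^{-s}\big)-s\log n$ is finite. Exponentiating proves $\zeta^{1/k}_{\cA,k}(s)\to\xi_\cA(s)\in(0,m]$ for every $s\in[0,\infty)$.

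Finally I would transfer regularity to the limit. As a pointwise limit of decreasing functions $\xi_\cA$ is decreasing, and as a pointwise limit of the log-convex functions $\zeta^{1/k}_{\cA,k}$ it is log-convex; boundedness $0<\xi_\cA\le m$ was already noted. Convexity of $\log\xi_\cA$ gives continuity on the open half-line $(0,\infty)$ automatically. The one point needing care is the endpoint $s=0$: here I would use the squeeze $b_1(s)\le\log\xi_\cA(s)\le\log m$ together with $\lim_{s\to0^+}b_1(s)=\log m$ to conclude $\lim_{s\to0^+}\xi_\cA(s)=m=\xi_\cA(0)$, establishing right-continuity. The main (though mild) obstacle is thus not the convergence itself, which Fekete delivers cleanly once the factor $n^{-s}$ is absorbed into $b_k$, but rather ensuring the Fekete limit is finite and strictly positive and that continuity survives at the boundary point $s=0$, where convexity of $\log\xi_\cA$ alone does not suffice.
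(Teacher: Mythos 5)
Your proof is correct and takes essentially the same approach as the paper: both derive supermultiplicativity of $\zeta_{\cA,k}(s)$ from the (sub)multiplicative bound on the norm and then run the standard Fekete subadditivity argument, identifying $\xi_\cA(s)$ with a supremum of normalized logarithms. The only differences are minor: the paper passes to an equivalent submultiplicative norm (so the constant disappears) and reproves the Fekete convergence by hand, while you keep the max norm, absorb the factor $n^{-s}$ into the shifted sequence $b_k(s)=\log\zeta_{\cA,k}(s)-s\log n$, and cite Fekete directly; your explicit verification of continuity at the endpoint $s=0$ is a worthwhile detail that the paper leaves implicit.
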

\begin{proof}
  The $\zeta^{1/k}_{\cA,k}(s)$ are analytical log-convex monotonically 
  decreasing functions because every summand $\|A_I\|^{-s}$ of the 
  $\zeta_{\cA,k}$  satisfies those properties and so does every finite 
  or infinite (converging) sum of them and positive power. 

  In order to prove the convergence of the sequence we can replace 
  without loss of generality in the expression of the $\zeta_{\cA,k}$ 
  (by abuse of notation we will denote the new functions still by 
  $\zeta_{\cA,k}$) the max norm with any submultiplicative norm 
  $\|\cdot\|'$ and notice 
  that, since $\|A_{IJ}\|'\leq\|A_I\|'\|A_J\|'$, then
  $\zeta_{\cA,k+k'}(s)\geq\zeta_{\cA,k}(s)\zeta_{\cA,k'}(s)$.
  From this follows that, for every $s$, the sequence $\zeta^{1/k}_{\cA,k}(s)$ 
  can have only one accumulation point and this point must be equal to 
  $\sup_{k\in\bN}\zeta^{1/k}_{\cA,k}(s)$. The main point is that for every element
  $\zeta^{1/k_0}_{\cA,k_0}(s)$ almost all other elements of the sequence are
  not smaller than $\zeta^{1/k}_{\cA,k_0}(s)-\varepsilon$ for every 
  $\varepsilon>0$. Indeed if $k=Nk_0$ then immediately
  $\zeta^{1/k}_{\cA,k}(s)\geq\left[\zeta^{N}_{\cA,k_0}(s)\right]^{1/k_0}
  =\zeta^{1/k_0}_{\cA,k_0}(s)$, 
  while if $k=Nk_0+\ell$, with $1\leq\ell\leq k_0-1$, 
  then $\zeta_{\cA,k}(s)\geq\zeta^N_{\cA,k_0}(s)\zeta_{\cA,\ell}(s)$, so that
  $$
  \zeta^{1/k}_{\cA,k}(s)
  \geq
  \zeta^{\frac{N}{Nk_0+\ell}}_{\cA,k_0}(s)\zeta^{\frac{1}{Nk_0+\ell}}_{\cA,\ell}(s)
  =
  \left[\zeta^{1/k_0}_{\cA,k_0}(s)\right]^{\frac{1}{1+\ell/(Nk_0)}}\zeta^{\frac{1}{Nk_0+\ell}}_{\cA,\ell}(s)  
  $$
  Since there are only a finite number of possible values of $\ell$,
  for every $\varepsilon'>0$ we can find a $N$ big enough s.t. both 
  $\left|\left[\zeta^{1/k_0}_{\cA,k_0}(s)\right]^{\frac{1}{1+\ell/(Nk_0)}}
  -\zeta^{1/k_0}_{\cA,k_0}(s)\right|<\varepsilon'$ and 
  $\left|\zeta^{\frac{1}{Nk_0+\ell}}_{\cA,\ell}(s)-1\right|<\varepsilon'$
  hold for all $\ell$. Hence 
  $$
  \zeta^{1/k}_{\cA,k}(s)
  \geq 
  \zeta^{1/k_0}_{\cA,k_0}(s)-\varepsilon'\left(\zeta^{1/k_0}_{\cA,k_0}(s)+1-\varepsilon'\right)
  \geq\zeta^{1/k_0}_{\cA,k_0}(s)-\varepsilon
  $$
  for small enough $\varepsilon'$.

  That $\xi_\cA(s)=\sup_{k\in\bN}\zeta^{1/k}_{\cA,k}(s)$ be finite for all $s$
  is clear from the fact that all $\zeta^{1/k}_{\cA,k}(s)$ are positive
  decreasing functions bounded by $\zeta^{1/k}_{\cA,k}(0)=m$.
\end{proof}
\begin{theorem2}
  The function $\xi_\cA$ satisfies the following properties:
  \begin{enumerate}
    \item $\xi_\cA(s)>1$ for $s<s_\cA$;
    \item $\xi_\cA(s_\cA)=1$;
    \item $\xi_\cA(s)<1$ for $s>s_\cA$ if $\cA$ is a hyperbolic gasket;
    \item $\xi_\cA(s)=1$ for $s>s_\cA$ if $\cA$ is a parabolic fast gasket.
  \end{enumerate}
\end{theorem2}
\begin{proof}
  Directly from the $n$-th root test we get that $\xi_\cA(s)\geq1$ for
  $s<s_\cA$ and $\xi_\cA(s)\leq1$ for $s>s_\cA$, so that in particular
  $\xi_\cA(s_\cA)=1$. 

  Assume first that $\cA$ is hyperbolic, so that there exist
  constants $\alpha>1$ and $K>0$ s.t. $\|A_I\|\geq K\alpha^{|I|}$ 
  for every $I\in\cI^m$. Hence
  $$
  \frac{d\phantom{s}}{ds}\ln\zeta^{1/k}_k(s)=
  -\frac{1}{k}\frac{\sum_{|I|=k}\left(\|A_I\|^{-s}\ln\|A_I\|\right)}
  {\sum_{|I|=k}\|A_I\|^{-s}}\leq-\ln\alpha-\frac{\ln K}{k},
  $$
  namely for every $\varepsilon>0$ we can find a $\alpha'>1$, with
  $|\alpha-\alpha'|\leq\varepsilon$, and a $\bar k>0$ such that
  $(\ln\zeta^{1/k}_k(s))'\leq -\ln\alpha'$ for all $k\geq\bar k$.
  Since $\log_\alpha\zeta^{1/k}_k(s_\cA)=0$ this means that, for every $k\in\bN$ and 
  $s>0$, 
  $\ln\zeta^{1/k}_k(s_\cA+s)\leq -s\ln\alpha'$ and 
  $\ln\zeta^{1/k}_k(s_\cA-s)\geq s\ln\alpha'$,
  namely $\zeta^{1/k}_k(s_\cA)\geq(\alpha')^s>1$ at the left of $s_\cA$ and 
  $\zeta^{1/k}_k(s_\cA)\leq(\alpha')^{-s}<1$ at its right. Since this is true for 
  almost all $k$, the same properties hold for $\xi_\cA$.

  Assume now that $\cA$ is fast parabolic and that $s_\cA<\infty$ (e.g. in case
  that $\cA$ is fast). In this case the sequence
  $a_k=\min_{|I|=k}\{\|A_I\|\}$ grows polynomially and therefore, for $s>s_\cA$, 
  $$
  1\geq\zeta^{1/k}_k(s)\geq a_k^{-s/k}=\left(a_k^{-\frac{1}{k}}\right)^{-s}\buildrel k\to\infty \over 
  \longrightarrow 1,
  \hbox{ i.e. }\xi_\cA(s)=\lim_{k\to\infty}\zeta^{1/k}_k(s)=1.
  $$
  Proving that $\xi_\cA(s)>1$ for $s<s_\cA$ requires much more work.
  We start by observing that, analogously to (\ref{eq:SAAA}) and 
  (\ref{eq:SAM}), we have the inequality  
  \begin{equation}
    \label{eq:SkAAA}
    \zeta_{\cA_{A_I},k}(s)\geq\frac{1}{n^s\|A_I\|^s}\zeta_{\cA,k}(s)
  \end{equation}
  and we can re-write $\zeta_{\cA_{A_I},k}$ as follows:
  \begin{equation}
    \label{eq:SkAM}
    \zeta_{\cA_{{A_I},k}}(s) = \sum_{\substack{D\in\cD^m_k\\ |D|=k}}\zeta_{\cA_{A_{DI}},0}(s)
                     +\sum_{J\in\cJ^m_{k}}\zeta_{\cA_{A_{JI}},k-|J|}(s).
  \end{equation}
  %
  Applying (\ref{eq:SkAAA}) to (\ref{eq:SkAM}) we get that
  $$
  \zeta_{\cA_{{A_I},k}}(s) \geq \sum_{\substack{D\in\cD^m_k\\ |D|=k}}\zeta_{\cA_{A_{DI}},0}(s)
  +n^{-s}\sum_{J\in\cJ^m_{k}}\|A_{JI}\|^{-s}\zeta_{{\cA},k-|J|}(s).
  $$
  We set $w^{0,j}_{\cA_{A_I},\kappa}(s)=\sum_{\substack{J\in\cJ^m_{k}\\ |J|=j}}n^{-s}\|A_{JI}\|^{-s}$,
  so that 
  \begin{equation}
    \label{eq:uff}
    \zeta_{\cA_{{A_I},k}}(s) \geq \sum_{2\leq j\leq k}w^{0,j}_{\cA_{A_I},\kappa}(s)\zeta_{{\cA},k-j}(s).
  \end{equation}
  Now we split the second summation in (\ref{eq:SkAM}) as we did in (\ref{eq:split})
  and obtain
  $$
  \zeta_{\cA_{{A_I},k}}(s) = \sum_{\substack{D\in\cD^m_k\\ |D|=k}}\zeta_{\cA_{A_{DI}},0}(s)
                     +\sum_{\substack{J\in\cJ^m_{k}\\ \|A_J\|\leq\kappa}}\zeta_{\cA_{A_{JI}},k-|J|}(s)
                     +\sum_{\substack{J\in\cJ^m_{k}\\ \|A_J\|>\kappa}}\zeta_{\cA_{A_{JI}},k-|J|}(s).
  $$
  Now we apply recursively (\ref{eq:uff}) to the first term in the splitting
  and (\ref{eq:SkAAA}) to the second, obtaining
  $$
  \zeta_{\cA_{{A_I},k}}(s) 
  \geq
  \sum_{\substack{J\in\cJ^m_{k}\\ \|A_J\|\leq\kappa}}\zeta_{\cA_{A_{JI}},k-|J|}(s)
  +
  \sum_{\substack{J\in\cJ^m_{k}\\ \|A_J\|>\kappa}}\zeta_{\cA_{A_{JI}},k-|J|}(s)\geq
  $$
  $$
  \geq \sum_{2\leq j\leq k} \sum_{\substack{J\in\cJ^m_{k}\\ \|A_J\|\leq\kappa}}
  w^{0,j}_{\cA_{A_{JI}},\kappa}(s)\zeta_{{\cA},k-j}(s) 
  + n^{-s}\sum_{\substack{J\in\cJ^m_{k}\\ \|A_J\|>\kappa}}\|A_{JI}\|^{-s}\zeta_{{\cA},k-|J|}(s)=
  $$
  $$
  =\sum_{2\leq j\leq k}w^{1,j}_{\cA_{A_I},\kappa}(s)\zeta_{{\cA},k-j}(s),
  $$
  where 
  $$
  w^{1,j}_{\cA_{A_I},\kappa}(s) = 
  \sum_{\substack{J\in\cJ^m_{k}\\ |J|=j\\ \|A_J\|\leq\kappa}}w^{0,j}_{\cA_{A_{JI}},\kappa}(s)
  +
  n^{-s}\sum_{\substack{J\in\cJ^m_{k}\\ |J|=j\\ \|A_J\|>\kappa}}\|A_{JI}\|^{-s}.
  $$ 
  Since there are only finitely many terms in $\cA$ with norm not larger than
  $\kappa$, by repeating recursively this procedure, the $w^{i,j}_{\cA_{A_I},\kappa}$
  stabilize to some functions $w^{j}_{\cA_{A_I},\kappa}$ such that
  $\zeta_{\cA_{{A_I},k}}(s) 
  \geq
  \sum_{2\leq j\leq k}w^{j}_{\cA_{A_I},\kappa}(s)\zeta_{{\cA},k-j}(s)$.
  For $I=0$ we get
  $$
  \zeta_{\cA_,k}(s) 
  \geq
  \sum_{2\leq j\leq k}w^j_{\cA,\kappa}(s)\zeta_{{\cA},k-j}(s).
  $$
  Now consider the polynomials 
  $p_k(x)=x^k-\sum_{2\leq j\leq k}w^j_{\cA,\kappa}(s)x^{k-j}$.
  By Descartes' rule of signs they all have a single positive root.
  Moreover for $k$ big enough this root is larger than 1. Indeed
  by comparing the recursive algorithm that generate the $w^j_{\cA,\kappa}$ 
  with the one generating $g_{\cA,\kappa}$ it is clear that the function
  $W^k_{\cA_{A_I},\kappa}(s)=\sum_{2\leq j\leq k}w^{j}_{\cA_{A_I},\kappa}(s)$ is equal 
  to the truncation of
  the series $g_{\cA_{A_I},\kappa}$ to the matrices $A_I$ with $|I|\leq k$.
  In particular this means that when $s<s_\cA$ there is some $\bar k$ 
  large enough that $W^{\bar k}_{\cA_{A_I},\kappa}(s)>1$. Then $p_{\bar k}(1)<0$
  and therefore the positive root $\sigma$ of $p_{\bar k}$ is larger than 1.
  Hence 
  $$
  \mu\bydef
  \min_{0\leq j\leq \bar k}\{\zeta_{\cA_,j}(s)\sigma^{-j}\}=
  \inf_{0\leq j\leq\infty}\{\zeta_{\cA_,j}(s)\sigma^{-j}\},
  $$
  which follows by induction as a consequence of the following observation:
  $$
  \zeta_{\cA_,\bar k+1}(s)
  \geq
  \sum_{2\leq j\leq \bar k}w^{j}_{\cA_{A_I},\kappa}(s)\zeta_{{\cA},\bar k+1-j}(s)
  \geq
  \mu\sum_{2\leq j\leq \bar k}w^{j}_{\cA_{A_I},\kappa}(s)\sigma^{\bar k+1-j}
  \geq
  \mu\sigma^{\bar k+1}.
  $$
  Hence $\lim_{j\to\infty}\zeta^{1/j}_{\cA_,j}(s)\geq\sigma>1$ for 
  every $s\geq s_\cA$.
\end{proof}
\begin{example}
  \label{ex:cool}
  Let $M_1,\dots,M_m$ be upper triangular matrices of the form
  $$
  M_i=\begin{pmatrix}\alpha_i&\beta_i\cr 0&1\cr\end{pmatrix}
  $$
  and assume that 
  $$
  \max_{1\leq i\leq m}\{\beta_i\}\leq\frac{1}{1-\max_{1\leq i\leq m}\{1-\alpha_i\}}.
  $$
  It is easy to prove by induction that under this assumption the non-zero 
  off-diagonal term never gets larger than 1, so that $\|M_I\|=1$ 
  for every $I\in\cI^m$. 

  Now consider the gasket $\cA$ generated by $A_i=\rho_i M_i$, $\rho_i>1$. 
  By the observation above for every $I=i_1\dots i_k$ we have that
  $\|A_I\|=\rho_{i_1}\cdots\rho_{i_k}$ and therefore
  $$
  \zeta_{\cA,k} = \sum_{I\in\cI^m_k}\|A_I\|^{-s}=
  \sum_{I\in\cI^m_k}\rho_{i_1}^{-s}\cdots\rho_{i_k}^{-s}=
  \left(\rho_1^{-s}+\dots+\rho_m^{-s}\right)^k.
  $$
  Since $\cA$ is clearly a hyperbolic gasket, by Theorem~2 its
  exponent $s_\cA$ is the unique solution of the equation 
  $\rho_1^{-s}+\dots+\rho_m^{-s}=1$.
  Similar but more complicated conditions can be found for 
  upper triangular matrices in higher dimension (see Section~\ref{sec:aSG}
  for a similar case with $3\times3$ matrices).
\end{example}
\subsubsection{Norm asymptotics of fast gaskets}
We can use the previous section's results to prove Theorem 2.
\begin{lemma}
  Let $\cA:\cI^m\to M_n(K)$ be a semigroup homomorphism
  and $M\in\GL_n(K)$. Then 
  \begin{equation}
    \label{eq:N(r)}
    N_\cAM(r)>N_\cA\left(\frac{r}{n\|M\|}\right).
  \end{equation}
\end{lemma}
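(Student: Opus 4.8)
The plan is to exploit the invertibility of $M$. Right multiplication $R_M\colon X\mapsto XM$ is a bijection of $M_n(K)$ onto itself, and since $\cAM(I)=A_IM=R_M(A_I)$ it restricts to a bijection $R_M\colon\cA(\cI^m)\to\cAM(\cI^m)$. The whole statement therefore reduces to understanding how $R_M$ moves the closed balls $B^n_r$, i.e. to comparing $\|A_I\|$ with $\|A_IM\|$, and then counting.

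First I would record the two norm estimates that follow from (\ref{eq:PQ}), exactly as in the proof of the first Proposition: for every $X\in M_n(K)$,
\[
  \frac{\|X\|}{n\|M^{-1}\|}\;\leq\;\|XM\|\;\leq\;n\|X\|\,\|M\|.
\]
The right-hand inequality is the one that drives the Lemma: if $\|A_I\|\leq r/(n\|M\|)$ then $\|A_IM\|\leq r$, so $R_M$ maps $B^n_{r/(n\|M\|)}\cap\cA(\cI^m)$ into $B^n_r\cap\cAM(\cI^m)$. Because $R_M$ is injective on matrices, distinct $A_I$ produce distinct $A_IM$, and counting cardinalities yields at once $N_\cAM(r)\geq N_\cA\bigl(r/(n\|M\|)\bigr)$. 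This is the essential content of the estimate and is immediate once the bijection and (\ref{eq:PQ}) are in place.

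The genuinely delicate point is upgrading $\geq$ to the strict inequality as written. Here I would argue that the injection above is not onto $B^n_r\cap\cAM(\cI^m)$: the left-hand norm estimate shows that any $A_IM$ with $\|A_IM\|\leq r$ satisfies only $\|A_I\|\leq rn\|M^{-1}\|$, so $R_M^{-1}\bigl(B^n_r\cap\cAM(\cI^m)\bigr)$ sits inside the strictly larger set $B^n_{rn\|M^{-1}\|}\cap\cA(\cI^m)$; the two radii differ by the factor $n^2\|M\|\,\|M^{-1}\|\geq n$, so there is room for points of $B^n_r\cap\cAM(\cI^m)$ that are not captured by the strict-radius count on the $\cA$-side. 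The natural witness is the empty-product element $\one_n=A_0$, whose image $A_0M=M$ lies in $\cAM(\cI^m)$ and falls in this gap precisely when $\|M\|\leq r<n\|M\|$. I expect this surjectivity-failure step to be the main obstacle, since the strictness is not literally uniform in $r$ — for very small $r$ both counts vanish — so the clean and fully general statement is the bound $N_\cAM(r)\geq N_\cA\bigl(r/(n\|M\|)\bigr)$, with the extra element $M=A_0M$ supplying the strict gain in the range where it is meaningful.
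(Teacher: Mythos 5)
Your proof is essentially the paper's own: the key step in both is the bound $\|A_IM\|\leq n\|A_I\|\,\|M\|$ from (\ref{eq:PQ}), which shows that right multiplication by the invertible $M$ injects $B^n_{r/(n\|M\|)}\cap\cA(\cI^m)$ into $B^n_r\cap\cAM(\cI^m)$, and counting finishes the argument (the paper states this as a one-line set inclusion, leaving the injectivity of $X\mapsto XM$ implicit where you make it explicit). Your caveat about strictness is also correct and worth noting: the paper's argument really only yields $N_\cAM(r)\geq N_\cA\bigl(r/(n\|M\|)\bigr)$, the strict form fails in general (e.g.\ when $r$ is small enough that both counts are zero), and this is harmless because only the weak inequality is invoked later, in the proof of Theorem~3.
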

\begin{proof}
  Since $\|AM\|\leq n\|A\|\|M\|$, we have that 
  $\|A_I\|\leq\frac{r}{n\|M\|}\implies\|A_IM\|\leq r$,
  namely $\{A_I|\|A_I\|\leq\frac{r}{n\|M\|}\}\subset\{A_IM|\|A_IM\|\leq r\}$
\end{proof}
\begin{theorem3}
  Let $\cA$ be a hyperbolic or fast parabolic gasket. Then
  $$
  \lim_{r\to\infty}\frac{\log N_\cAM(r)}{\log r} = s_\cA.
  $$
  for every $M\in\GL_n(K)$.
\end{theorem3}
\begin{proof}
  Since $\|A\|/(n\|M^{-1}\|)\leq\|AM\|\leq n\|A\|\|M\|$ we can prove the theorem 
  without loss of generality for $M=\one_n$.

  $\displaystyle\boldsymbol{\limsup_{r\to\infty}\frac{\log N_\cA(r)}{\log r} \leq s_\cA}$. 
  Let $s>s_\cA$. Then
  $$
  \infty>\zeta_\cA(s)>\sum_{\|A_I\|\leq r}\|A_I\|^{-s}\geq\sum_{\|A_I\|\leq r}r^{-s}= N_\cA(r)r^{-s},
  $$
  so that 
  $$
  s + \frac{\log \zeta_\cA(s)}{\log r} > \frac{\log N_\cA(r)}{\log r} 
  $$
  and therefore $\limsup_{r\to\infty}\frac{\log N_\cA(r)}{\log r}\leq s$.
  Since this is true for every $s>s_\cA$ it follows at once that 
  $\limsup_{r\to\infty}\frac{\log N_\cA(r)}{\log r} \leq s_\cA$.

  $\displaystyle\boldsymbol{\liminf_{r\to\infty}\frac{\log N_\cA(r)}{\log r} \geq s_\cA}$.
  From the elementary observation that
  $$
  \left\{A_I|\|A_I\|\leq r,I\in\cI^m\right\}\supset
  \bigcup_{J\in\cI^m_k}\{A_{IJ}|\|A_{IJ}\|\leq r,I\in\cI^m\}
  $$
  and using (\ref{eq:N(r)}) we get that, for every $k\in\bN$,
  $$
  N_\cA(r)\geq\sum_{J\in\cI^m_k}N_{\cA_{A_J}}(r)
  \geq
  \sum_{J\in\cI^m_k}N_{\cA}\left(\frac{r}{n\|A_J\|}\right).
  $$

  Assume now $s>s_\cA$. Since $\cA$ is, by hypothesis, either a hyperbolic
  or a fast parabolic gasket, by the definition of gasket and Theorem 2
  we can always choose a $k_0$ such that $\|A_I\|>1/n$ for $|I|\geq k_0$ and 
  $\sum_{I\in\cI^m_{k_0}}\|A_I\|^{-s}>n^s$.

  Now set $a_m=n\min_{I\in\cI^m_{k_0}}\|A_I\|$ and $a_M=n\max_{I\in\cI^m_{k_0}}\|A_I\|$,
  let $r_0>0$ be s.t. $N_\cA(r_0)>0$ and set $r_1=a_M r_0$ and
  $r_i=a_m^{i-1}r_1$, $i\geq2$.
  Similarly to the proof of Theorem~2 we have that, by induction,
  $$
  M\bydef\min_{r\in[r_0,r_1]}N_\cA(r_0)r^{-s}=\inf_{r\in[r_0,\infty]}N_\cA(r_0)r^{-s}.
  $$
  Indeed
%
  note first of all that $\lim_{i\to\infty}r_i=\infty$ since we chose 
  $k_0$ so that $a_m>1$. 
  Assume now 
  that $N_\cA(r)\geq Mr^s$ in $[r_0,r_i]$ and let $r\in[r_i,r_{i+1}]$.
  Then for every $J\in\cI^m_{k_0}$ we have that $r/(n\|A_J\|)\in[r_0,r_i]$
  since 
  $$
  r_i=\frac{r_{i+1}}{a_m}\geq\frac{r}{n\|A_J\|}\geq\frac{r_i}{a_M}=a_m^{i-1}r_0\geq r_0.
  $$
  and therefore
  $$
  N_\cA(r)
  \geq 
  \sum_{J\in\cI^m_k}N_{\cA}\left(\frac{r}{n\|A_J\|}\right)
  \geq
  \sum_{J\in\cI^m_k}M\left[\frac{r}{n\|A_J\|}\right]^{s}
  \geq 
  Mr^s.
  $$
  Hence it follows at once that
  $$
  \frac{\log N_\cA(r)}{\log r} \geq \frac{\log M}{\log r} + s
  $$
  and therefore $\liminf_{r\to\infty}\frac{log N_\cA(r)}{\log r}\geq s$.
  Since this is true for all $s<s_\cA$ it follows that
  $\liminf_{r\to\infty}\frac{log N_\cA(r)}{\log r}\geq s_\cA$.
\end{proof}
%
\section{Hausdorff dimension of limit sets of discrete subsemigroups of real and complex
projective automorphisms.}
\label{sec:fract}
In this section we show how the exponent of a free finitely generated
semigroup $\bA\subset SL_n(K)$ (resp. $\bA\subset SL^\pm_n(K)$ if $n=2n'$ 
and $K=\bR$) is sometimes related to the Hausdorff dimension of the set of 
limit points of a generic orbit in $KP^{n-1}$ of the subsemigroup 
$\Psi(\bA)\subset PSL_n(K)$ (resp. $\Psi(\bA)\subset PSL^\pm_{2n'}(\bR)$) 
naturally induced by $\bA$ (equivalently, to the residual set of the 
IFS corresponding to $\Psi(\bA)$). 

Throughout this section we will make analytical and numerical evaluations 
of the exponent of several semigroups. The analytical bounds are obtained
via the functions $\mu_{\bA,k}$ defined in Section~\ref{sec:mu} thanks to
Theorem 1. The numerical ones are obtained by evaluating numerically the
function $N_\bA(k)$ and interpolating the curve $\log N_\bA(k)$ with respect
to $\log k$ thanks to Theorem 3. 
Since $\bA$ is a gasket and $N_\bA(k)$ is integer-valued, 
a computer program can evaluate {\em exactly} its values, the only 
constraint coming from the running time which increases exponentially
with $k$. All calculations were done with Xeon 2MHz CPUs under Linux.
\subsection{$n=2, K=\bR$}
Let $\{f_1,\dots,f_m\}$ be a free set of linear automorphisms of $\bR^2$
preserving a volume 2-form {\em modulo sign}. 
With respect to any frame $\cE=\{e_1,e_2\}$, these automorphisms are 
represented by matrices $A_i\in SL_2^\pm(\bR)$. 
We denote by $\bA$ the semigroup generated by the $A_i$ and by
$\psi_I\in PSL^\pm_2(\bR)$ the automorphism of $\bR P^1\simeq\bS^1$ naturally 
induced by $f_I$, $I\in\cI^2$. The similarity between the characterization
of the exponent $s_\cA$ given in Theorem 2 and the formula for the Hausdorff
dimension of a 1-dimensional IFS given in~\cite{Fal90} (Theorem~9.9, p.~126)
suggests the following claim:
%
\begin{theorem4}
  \label{thm:SL(2,R)}
  Assume that the $f_i$ have all real distinct eigenvalues and that there exists 
  some proper open set $V\subset\bR P^1$ invariant by the $\psi_i$ and such that, 
  for some affine chart $\varphi:\bR P^1\to\bR$, the $\psi_i$ are contractions 
  on $\varphi(\overline{V})$ with respect to the Euclidean distance and satisfy 
  the ``open set condition'' $\psi_1(V)\cap\psi_2(V)=\emptyset$. 
  Let $R_\bA=\cap_{k=1}^\infty\left(\cup_{|I|=k}\psi_{I}(V)\right)$ 
  be the corresponding residual set. Then $2\dim_HR_\bA=s_\bA$.
\end{theorem4}
\begin{proof}
  Let $(x,y)$ and $[x:y]$ the affine and homogeneous coordinates associated
  to $\varphi$ and assume, for the argument's sake, that $\varphi([x:y])=x/y$.
  Under the hypotheses every $\psi_I$ has two fixed points, an attractive one 
  $a_I$ and a repulsive one $r_I$. 
  Since we are assuming $V$ to be invariant under the $\psi_I$, then it must 
  happen that $a_I\in\overline{V}$ for all $I\in\cI^m$. 
  Let $A_I=\begin{pmatrix}\alpha&\beta\cr \mu&\nu\cr\end{pmatrix}$ 
  be the matrix representing the $f_I$ in the 
  affine coordinates relative to $\varphi$. Let 
  $\Psi_I=\varphi\circ\psi_I\circ\varphi^{-1}(x)$ be the coordinate expression 
  of $\psi_I$ in the chart $\varphi$. Then a direct calculation shows that
  $$
  \Psi_I(\varphi)=\frac{\alpha\varphi+\beta}{\mu\varphi+\nu},\;
  \Psi'_I(\varphi)=\frac{\det A_I}{(\mu\varphi+\nu)^2},\;
  \Big|\Psi'_I(\varphi(a_I))\Big|=\frac{1}{\|A_I\|^2}
  $$
  (recall that $\det A_I=\pm1$).
  Now let $\varphi(\overline{V})=[\varphi_1,\varphi_2]$ and set 
  $\varphi_m=(\varphi_1+\varphi_2)/2$. 
  Then $\|A_I\|/2\leq\mu\varphi_m+\nu\leq\|A_I\|$ and therefore
  $$
  \frac{1}{\|A_I\|^2}\leq\Big|\Psi'_I(\varphi_m)\Big|\leq\frac{4}{\|A_I\|^2}.
  $$
  Hence the limit 
  $$
  \lim_{k\to\infty}\left[\sum_{|I|=k}\Big|\Psi'_I(\varphi_m)\Big|^s\right]^\frac{1}{k}
  $$
  converges iff it converges the limit
  $$
  \lim_{k\to\infty}\left[\sum_{|I|=k}\|A_I\|^{-2s}\right]^\frac{1}{k}.
  $$
  By the result on 1-dimensional contractions quoted above,
  the exponent separating the values of $s$ for which the first limit diverge 
  from those for which it converges is exactly $\dim_H R_\bA$.
  By Theorem 2 this means exactly that $2\dim_H R_\bA=s_\bA$.
\end{proof}
An interesting consequence of the previous theorem is the following
constraint posed by geometry to the (algebraic) exponent of the semigroups 
satisfying its conditions:
\begin{corollary}
  Let $\bA\subset SL_2^\pm(\bR)$ be a semigroup satisfying the conditions
  of the theorem above. Then $s_\bA\leq2$.
\end{corollary}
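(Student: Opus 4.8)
The plan is to read off the result directly from Theorem~4 together with the elementary fact that Hausdorff dimension is monotone under inclusion and that $\bR P^1$ is one-dimensional. First I would invoke Theorem~4, whose conclusion is the identity $2\dim_H R_\bA = s_\bA$, valid precisely under the hypotheses assumed in the corollary. This reduces the claim $s_\bA \leq 2$ to the inequality $\dim_H R_\bA \leq 1$, so the entire burden shifts onto bounding the Hausdorff dimension of the residual set.

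Next I would observe that, by its very definition $R_\bA = \cap_{k=1}^\infty\left(\cup_{|I|=k}\psi_{I}(V)\right)$, the set $R_\bA$ is a subset of $\bR P^1$ (indeed a subset of $\overline{V}\subset\bR P^1$). Since $\bR P^1$ is a smooth one-dimensional manifold, diffeomorphic to $\bS^1$, it has Hausdorff dimension equal to $1$; working as the paper does in a single affine chart $\varphi:\bR P^1\to\bR$, where the round distance is Lipschitz-equivalent to the Euclidean one, one sees that any subset sits inside a Lipschitz copy of a bounded subset of $\bR$. By the monotonicity of Hausdorff dimension under inclusion (and its invariance under bi-Lipschitz maps, which is why the choice of metric on $\bR P^1$ is immaterial), we get $\dim_H R_\bA \leq \dim_H \bR P^1 = 1$.

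Combining the two steps yields $s_\bA = 2\dim_H R_\bA \leq 2$, which is the assertion. I do not expect any genuine obstacle here: the only point requiring a word of care is the passage from the abstract round metric on $\bR P^1$ to a concrete Euclidean estimate, but this is already handled by the Lipschitz-equivalence remarks established earlier in the excerpt, so no new estimate is needed. The corollary is thus essentially a geometric consistency check on the identity in Theorem~4, recording that the exponent $s_\bA$ cannot exceed twice the ambient dimension of the projective line on which the induced IFS acts.
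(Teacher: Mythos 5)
Your proof is correct and is essentially the paper's own argument: the paper also deduces the bound from Theorem~4 together with the fact that the Hausdorff dimension of a subset of $\bR$ (equivalently, of $\bR P^1$ viewed through an affine chart) cannot exceed $1$. Your additional remarks on Lipschitz equivalence of the metrics merely make explicit what the paper treats as trivial.
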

\begin{proof}
  This is a direct consequence of the trivial fact that the Hausdorff
  dimension of a subset of $\bR$ cannot be bigger than 1.
\end{proof}
\subsubsection{Matrices with non-negative entries}
The semigroup $SL_2^\pm(\bR^+)$ is a source of several interesting 
semigroups that satisfy the hypothesis of Theorem 4. Indeed every
linear automorphism \hbox{$f=A^i_je_i\otimes\varepsilon^j$} preserves
the positive cone $C(\cE)$ over $\cE$ and therefore the induced
projective automorphism $\psi_f$ preserves the closed segment 
$S(\cE)\subset\bR P^1$.

In this simple setting there is an easy sufficient condition
to determine whether a gasket is fast:
\begin{proposition}
  Let $\cA:\cI^m\to SL^\pm_2(\bR^+)$ be a gasket and suppose that all
  products $A_I$, $|I|=2$, have no entry equal to zero. Then $\cA$ 
  is fast.
\end{proposition}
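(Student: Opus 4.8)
The plan is to exploit the one structural feature that the max-norm does not display directly: since all entries lie in $\bR^+$, \emph{no cancellation} can occur in a matrix product, so a single entry of $A_{IJ}$ can be bounded below by the product of one entry of $A_I$ with one entry of $A_J$. Concretely I would fix once and for all the two constants attached to the (finitely many) length-two products, all strictly positive by hypothesis: let $\delta=\min\{(A_{ij})_{kl}:1\le i,j\le m,\ 1\le k,l\le 2\}>0$ be the smallest entry occurring in any $A_{ij}$ with $|ij|=2$, and let $\Delta=\max_{|ij|=2}\|A_{ij}\|<\infty$. The target is a uniform $c>0$ with $\|A_{IJ}\|\ge c\,\|A_I\|\,\|A_J\|$ for all $I\in\cI^m$ and $J\in\cJ^m\cdot\cI^m$, and I expect to reach $c_\cA\ge \delta/(2\Delta)$.

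The main step, and the one I expect to be the genuine obstacle, is to show that every $A_J$ with $|J|\ge 2$ is \emph{balanced}, meaning each of its two rows carries an entry comparable to $\|A_J\|$:
\[
  \max_{1\le l\le 2}(A_J)_{kl}\ \ge\ \frac{\delta}{2\Delta}\,\|A_J\|\qquad(k=1,2).
\]
Strict positivity of the \emph{length-two} products — not merely of $A_J$ itself — is essential here, because a general strictly positive $SL_2^\pm(\bR)$ matrix may have a whole row of size $O(1/\|A_J\|)$, and for such a skewed matrix the desired inequality is false; the hypothesis is exactly what forbids this skewing for semigroup elements of length $\ge2$. To prove balancedness I would factor $A_J=A_{j_1j_2}\,A_{J''}$, where $j_1,j_2$ are the first two indices of $J$ (so $A_{j_1j_2}$ is one of the strictly positive length-two products, with all entries $\ge\delta$) and $A_{J''}$ is the non-negative tail, equal to the identity if $|J|=2$. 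By non-negativity, row $k$ of $A_J$ is (row $k$ of $A_{j_1j_2}$) $\cdot\,A_{J''}$ with both coefficients $\ge\delta$, so without cancellation $\max_l(A_J)_{kl}\ge\delta\max_l\sum_h (A_{J''})_{hl}\ge\delta\,\|A_{J''}\|$; on the other hand (\ref{eq:PQ}) gives $\|A_J\|\le 2\,\|A_{j_1j_2}\|\,\|A_{J''}\|\le 2\Delta\,\|A_{J''}\|$, and combining the two yields the claim.

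With balancedness in hand the conclusion is immediate and, pleasantly, uses nothing about $A_I$ beyond non-negativity — which is also why the definition of \emph{fast} may leave $I$ arbitrary while constraining $J$ to have length at least two. Let $(i^{\ast},k^{\ast})$ locate a maximal entry of $A_I$, so that $(A_I)_{i^{\ast}k^{\ast}}=\|A_I\|$. For every column $l$, non-negativity gives $(A_{IJ})_{i^{\ast}l}=\sum_k (A_I)_{i^{\ast}k}(A_J)_{kl}\ge \|A_I\|\,(A_J)_{k^{\ast}l}$, whence $\|A_{IJ}\|\ge \|A_I\|\,\max_l (A_J)_{k^{\ast}l}\ge \tfrac{\delta}{2\Delta}\,\|A_I\|\,\|A_J\|$. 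Since the right factor may itself be a single generator or the identity, this one estimate covers all of $\cI^m\times(\cJ^m\cdot\cI^m)$ uniformly, giving $c_\cA\ge\delta/(2\Delta)>0$ and hence that $\cA$ is fast.
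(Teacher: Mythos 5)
Your proof is correct and takes essentially the same route as the paper's: both factor off the strictly positive length-two prefix, i.e.\ write $A_J=A_{j_1j_2}A_{J''}$, use that positivity to show each row of $A_J$ contains an entry comparable to $\|A_J\|$, and then conclude using only the non-negativity of the left factor $A_I$ (the paper phrases this as $\|MA_{12K}\|\geq c\|M\|\|A_{12K}\|$ for every $M\in M_2(\bR^+)$). The only difference is bookkeeping of the constant: the paper compares the two rows of $A_{j_1j_2}A_{J''}$ column-by-column via entry ratios of $A_{j_1j_2}$, getting $c=\min_{i\neq j}\min\{p,q\}/\max\{r,s\}$, whereas you bound each row below by $\delta\|A_{J''}\|$ and then invoke $\|A_J\|\leq 2\Delta\|A_{J''}\|$ from (\ref{eq:PQ}), getting $\delta/(2\Delta)$.
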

\begin{proof}
  Let $A_{12}=\begin{pmatrix}p&q\cr r&s\cr\end{pmatrix}$ and 
  $A_K=\begin{pmatrix}a&b\cr c&d\cr\end{pmatrix}$, so that
  $$
  A_{12K}=\begin{pmatrix}pa+qc&pb+qd\cr ra+sc&rb+bd\cr\end{pmatrix}.
  $$
  Assume, for the argument's sake, that $\|A_{12K}\|=ra+sb$.
  Then, since 
  $$
  pa+qc\geq\frac{\min\{p,q\}}{\max\{r,s\}}(ra+sb),$$
  we have that, for every $M\in M_2(\bR^+)$,
  $$
  \|MA_{12K}\|\geq\|M\|(pa+qc)\geq\frac{\min\{p,q\}}{\max\{r,s\}}\|M\|\|A_{12K}\|.
  $$
  By repeating this argument for every index of order 2 and denoting
  by $c$ the smallest of these coefficients, we have that 
  $\|MA_{12K}\|\geq c\|M\|\|A_{12K}\|$ for every $M\in M_2(\bR^+)$. 
  In particular then $\cA$ is a fast gasket with coefficient not smaller than $c$.
 \end{proof}
\begin{example}
  \label{ex:Fal}
  Let $\cE=\{e_1,e_2\}$ be a frame on $\Rt$ and $f_{1,2}$ defined by
  $$
  f_1(e_1)=e_1+e_2, f_1(e_2)=e_2;\; f_2(e_1)=2e_1+e_2, f_2(e_2)=e_2.
  $$
  With respect to $\cE$ the $f_i$ are represented by the matrices
  $$
  F_1=\begin{pmatrix}1&1\cr 1&0\cr\end{pmatrix},
  F_2=\begin{pmatrix}2&1\cr 1&0\cr\end{pmatrix}. 
  $$
  The semigroup $\bF=\langle F_1,F_2\rangle\subset SL^-(\bN)$ is free because 
  if $F_I\in\bA$, $I\in\cI^2$, then the entries in $F_I$'s 
  lower row are equal to the entries in the upper row of the matrix $F_{I'}$
  and according to whether the upper left entry of $F_I$ is larger or smaller
  than its lower left entry we get whether $I=2I'$ or $I=1I'$. Proceeding 
  recursively this way we see that there is no other index $J\neq I$ s.t.
  $F_J=F_I$. In particular then $\bF$ is a gasket. Moreover $\bF$ is 
  hyperbolic: indeed clearly $\|F_I\|\geq\|F_1^{|I|}\|$, since $F_2$ has no 
  entry smaller than the corresponding entry of $F_1$, and 
  $\|F_1^k\|\simeq g^k$, where $g$ is the golden ratio, because clearly 
  $F_1^k=\begin{pmatrix}f_{k+2}&f_{k+1}\cr f_{k+1}&f_{k}\cr\end{pmatrix}$,
  where $f_k$, $k\geq1$, is the Fibonacci sequence $0,1,1,2,3,5,\dots$ 
  Finally, $\bF$ is fast (with coefficient not smaller than $1/3$)
  by the previous proposition.

  A direct calculation shows that $\psi_{1,2}$ are not contractions over
  $S(\cE)$ but they are so over
  the smaller set $S(\cE')$, with 
  $$\cE'=\{e'_1=(1+\sqrt{3},2),e'_2=(1+\sqrt{3},1)\}.$$
  Let $[x:y]$ be homogeneous coordinates on $\bR P^1$ corresponding to $\cE'$.
  In the canonical chart $\varphi=x/y$ the maps $\psi_i$ induced by $f_i$ 
  write as
  $$
  \psi_1(\varphi) = \frac{\varphi+1}{\varphi},\;  
  \psi_2(\varphi) = \frac{2\varphi+1}{\varphi},
  $$
  which reveals that this example coincides with Example 9.8 of~\cite{Fal90}, 
  coming from the theory of continued fractions.

  To obtain analytical bounds for $s_\bF$ we can use Theorem 2.
  Since both generators have an eigenvalue larger than 1 the norms
  of the terms $F_1F_2^k$ and $F_2F_1^k$ grow exponentially, so that
  we can get a good approximation of $\mu_{\bF,\ell}$ by truncating the
  sums after just a few terms. By considering only the terms with
  $k\leq10$ in $\mu_{\bF,0}$ and solving the equation $\mu_{\bF,0}(s)=2^s$
  in this approximation we get $s_\bF\geq.51$, with a relative error of about
  $6\%$ on the more precise estimate $s_\bF\geq.54$ obtained by considering
  $k\geq20$. Since $c=1/3$, the first $\mu_{\bF,\ell}$ we can get 
  upper bounds is $\mu_{\bF,3}$. Here we just mention that from
  $\mu_{\bF,8}$, considering the first 30 summands of all series that
  appear in its expression, we get $0.95\leq s_\bF\leq 1.76$. 
  In terms of the dimension of $R_\bF$ this translates in 
  $0.474\leq\dim_H R_\bF\leq0.877$. By evaluating $N_\bF(k)$ for
  $k=2^p$, $1\leq p\leq28$, (taking about 20 minutes of CPU time) we get 
  the estimate
  $s_\bF\simeq1.062$ (see Table~\ref{tab:N} for the corresponding values 
  of $N_\bF$), with a (heuristic) error of 2 on the last digit. 
  This corresponds to the well-known fact $\dim_H R_\bF\simeq 0.531$.
\end{example}
\begin{example} 
  Consider now 
  $$
  f_1(e_1)=e_1, f_1(e_2)=e_1+e_2;\; 
  f_2(e_1)=e_1+e_2, f_2(e_2)=e_2.
  $$
  The corresponding matrices (with respect to $\cE$)
  $$
  C_1=\begin{pmatrix}1&0\cr 1&1\cr\end{pmatrix},
  C_2=\begin{pmatrix}1&1\cr 0&1\cr\end{pmatrix} 
  $$
  generate the semigroup $\boC_2\subset SL_2(\bN)$ we already met 
  in Examples~\ref{ex:dyn} and~\ref{ex:dynfast}. 
  In particular we already know that $\boC_2$ is a parabolic fast
  gasket with coefficient $c=1/2$. 
  A direct check shows that the slowest and fastest growths 
  with respect to the order $k$ of the multi-index $I$ of $C_I\in\cA$ 
  correspond respectively to the pure powers $C_i^k$, for which $\|C_i^k\|=k$,
  and to the ``cyclic'' products $C_i C_{i+1} C_{i+2}\cdots C_{i+k-1}$, for
  which $\|C_i C_{i+1} C_{i+2}\cdots C_{i+k-1}\|\simeq g^k$, where the sums
  in the indices are meant ``modulo 2'' in the sense that 3 means 1, 4 means 2 
  and so on. The reason why the golden ration $g$ appears is that, similarly 
  to the previous case,
  $$
  C_1 C_2 C_1\cdots C_{i+k-1} = 
  \begin{pmatrix}f_{k+2}&f_{k+1}\cr f_{k+1}&f_{k}\cr\end{pmatrix}
  $$
  for $k$ odd while if $k$ is even the two rows get exchanged
  and analogously for the cyclic products beginning by $C_2$.

  In the affine chart $\varphi:[x:y]\to x/(x+y)$ the maps $\psi_i$
  induced by the $f_i$ write as 
  $$
  \psi_1(\varphi) = \frac{\varphi}{1+\varphi},\;
  \psi_2(\varphi) = \frac{1}{2-\varphi}
  $$
  and the segment $S(\cE)$ maps into $[0,1]$. Note that this choice
  of chart corresponds to writing $e_1=e'_1$ and $e_2=e'_1+e'_2$,
  expressing the $f_i$ with respect to $\cE'=\{e'_1,e'_2\}$ and
  using the canonical chart $y'=1$ for the corresponding homogeneous 
  coordinates $[x':y']$. In terms of the semigroup, this corresponds
  to the adjunction via the matrix 
  $M=\begin{pmatrix}1&1\cr 0&1\cr\end{pmatrix}$.
  A direct calculation
  shows that the $|\psi_i'(\varphi)|\leq1$ on $[0,1]$, with the equal sign 
  holding at $0$ for $\psi_1$ and at 1 for $\psi_2$, namely the IFS
  $\{\psi_1,\psi_2\}$ is parabolic. 

  Evaluating the Hausdorff dimension of the limit set $R_{\cC_2}$ of 
  a point $w\in(0,1)$ under the action of $\cC_2$ is nevertheless 
  an easy task. 
  Indeed, since $\psi_1((0,1))=(0,1/2)$ and $\psi_2((0,1))=(1/2,1)$,
  the $\psi_I$, $|I|=k$, subdivide $(0,1)$ into $2^k$ disjoint segments 
  $d_I=\psi_I(0,1)$ in such a way that $\cup_{|I|=k}\overline{d_I}=[0,1]$.
  Moreover the length of these segments goes to zero for $k\to\infty$.
  Indeed if $C_I=\begin{pmatrix}a&b\cr c&d\cr\end{pmatrix}\in\cC_2$ 
  then $\psi_I(\varphi)=\frac{(a-c)\varphi+c}{(a+b-c-d)\varphi+c+d}$
  and therefore
  $$
  |d_I| = \Big|\frac{c}{c+d}-\frac{a}{a+b}\Big|=\frac{1}{(a+b)(c+d)},
  $$
  from which we get 
  $$
  \frac{1}{4\|C_I\|^2}\leq|d_I|\leq\frac{1}{\|C_I\|}.
  $$
  Hence the orbit under the $\psi_I$ of every $w\in(0,1)$ is dense in
  $(0,1)$ and therefore $\dim_H R_{\boC_2}=1$. Note that, since this IFS is 
  not hyperbolic, Theorem 4 does not apply to it.

  Let us get analytical bounds for $s_\cA$ via $\mu_{\boC_2,\ell}$.
  Unlike the previous example, the presence of parabolic elements 
  in the semigroup does not allow to truncate the series to just 
  a few terms because of its very slow convergence.
  Since $\|C_iC_{i+1}^k\|=k+1$ we get easily that
  $$
  \mu_{\boC_2,0}(s)=\sum_{J\in\cJ^2}\|C_J\|^{-s}=2\sum_{k=1}^\infty\|C_1C_2^k\|^{-s}
  =2\sum_{k=2}^\infty k^{-s}=2(\zeta(s)-1),
  $$
  where $\zeta(s)$ is the Riemann's zeta function.
  The solution of $\mu_{\boC_2,0}(s)=2^s$ gives us the bound $s_{\boC_2}\geq1.54$.
  The first upper bound can be gotten from $\mu_{\boC_2,2}$,
  obtained by replacing the two terms of norm 2 in 
  $\mu_{\boC_2,0}(s)$ , namely $\|C_{12}\|^{-s}$ and $\|C_{21}\|^{-s}$, 
  with, respectively, $\mu_{\boC_2{A_{12}}}(s)$ and $\mu_{\boC_2{A_{21}}}(s)$.
  A direct calculation shows that 
  $$
  \mu_{\boC_2{A_{12}}}(s)=\sum_{k=2}^\infty(2k+1)^{-s}+\sum_{k=4}^\infty k^{-s}
  =2^{-s}\zeta(s,\frac{5}{2})+\zeta(s)-1-2^{-s}-3^{-s},
  $$
  where $\zeta(s,t)$ is the Hurwitz zeta function, and by symmetry
  we know that $\mu_{\boC_2{A_{12}}}=\mu_{\boC_2{A_{21}}}$.
  Hence
  $$
  \mu_{\boC_2,2}(s)=2\left(2\zeta(s)+2^{-s}\zeta(s,\frac{5}{2})-2-2^{1-s}-3^{-s}\right)
  $$
  which gives the bounds $1.7\leq s_{\boC_2}\leq3.93$ as solutions of
  $\mu_{\boC_2,2}(s)=2^s$ and $\mu_{\boC_2,2}(s)=2^{-s}$.
  A numerical evaluation of $N_{\boC_2}(k)$ for $k=2^p$, $1\leq p\leq17$,
  (taking about 1 hour of CPU time, see Table~\ref{tab:N} for the
  evaluated values) gives $s_{\boC_2}=2.0001$ with a 
  (heuristic) error of 1 on the last digit.
  This and the evaluation of $\dim_H R_{\boC_2}$ above strongly suggest
  that $s_{\boC_2}=2$.  
\end{example}
It is interesting to consider the following generalization of the 
previous example, namely the free semigroups $\boC_{2,\alpha}\subset SL_2(\bN)$
generated by
$$
C_{1,\alpha}=\begin{pmatrix}\alpha&0\cr 1/\alpha&1/\alpha\cr\end{pmatrix},
C_{2,\alpha}=\begin{pmatrix}1/\alpha&1/\alpha\cr 0&\alpha\cr\end{pmatrix}. 
$$
In this case, in the same framework used above, 
$$
\psi_{1,\alpha}(\varphi) = \frac{\varphi}{\alpha^2+(2-\alpha^2)\varphi},\;
\psi_{2,\alpha}(\varphi) = \frac{1+(\alpha^2-1)\varphi}{2+(\alpha^2-2)\varphi}.
$$
A direct check shows that, for every fixed $\alpha\in(1,2)$, the $\psi_{i,\alpha}$ 
are contractions on the invariant interval $[0,1]$ and that they satisfy 
the open set condition with respect to it. Let $R_{\boC_{2,\alpha}}$ be the limit set
of the orbit of any point $w\in(0,1)$ under the action induced by $\boC_{2,\alpha}$.
The very same argument used in the example above shows that 
$\dim_H R_{\boC_{2,\alpha}}=1$. As a corollary of Theorem 4 we get the following:
\begin{proposition}
$s_{\boC_{2,\alpha}}=2$ for every $\alpha\in(1,2)$.
\end{proposition}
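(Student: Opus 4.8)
The plan is to obtain the statement as an application of Theorem 4: once I verify that the family $\{\psi_{1,\alpha},\psi_{2,\alpha}\}$ meets the hypotheses of that theorem and compute $\dim_H R_{\boC_{2,\alpha}}$, the identity $2\dim_H R_{\boC_{2,\alpha}}=s_{\boC_{2,\alpha}}$ gives the exponent directly.

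First I would check the three hypotheses of Theorem 4 for a fixed $\alpha\in(1,2)$. The matrix $C_{1,\alpha}$ is lower triangular and $C_{2,\alpha}$ is upper triangular, both with diagonal entries $\alpha$ and $1/\alpha$; hence each $f_{i,\alpha}$ has the two real, distinct eigenvalues $\alpha\neq 1/\alpha$. A short computation with the explicit M\"obius expressions gives $\psi_{1,\alpha}(0)=0$, $\psi_{1,\alpha}(1)=\tfrac12$, $\psi_{2,\alpha}(0)=\tfrac12$ and $\psi_{2,\alpha}(1)=1$, and since both maps are orientation preserving (both derivatives being positive) they are increasing homeomorphisms with $\psi_{1,\alpha}([0,1])=[0,\tfrac12]$ and $\psi_{2,\alpha}([0,1])=[\tfrac12,1]$. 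Taking $V=(0,1)$, the images $\psi_{1,\alpha}(V)=(0,\tfrac12)$ and $\psi_{2,\alpha}(V)=(\tfrac12,1)$ are disjoint, which is exactly the open set condition; together with the already-noted fact that the $\psi_{i,\alpha}$ are contractions on $[0,1]$, all hypotheses of Theorem 4 hold.

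Next I would compute $\dim_H R_{\boC_{2,\alpha}}$. The essential feature is that the two level-one images tile $[0,1]$, meeting only at $\tfrac12$. Iterating the increasing homeomorphisms, the $2^k$ closed intervals $\overline{\psi_{I,\alpha}(V)}$, $|I|=k$, again tile $[0,1]$ with pairwise disjoint interiors, so $\cup_{|I|=k}\psi_{I,\alpha}(V)$ is all of $[0,1]$ except for finitely many shared endpoints. Because the $\psi_{i,\alpha}$ are strict contractions, the diameters of these intervals tend to $0$ uniformly in $k$ (the analogue of the estimate $\tfrac{1}{4\|C_I\|^2}\le|d_I|\le\tfrac{1}{\|C_I\|}$ from the preceding example), so no nondegenerate gap survives in the intersection. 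Hence $R_{\boC_{2,\alpha}}=\cap_{k\geq1}\cup_{|I|=k}\psi_{I,\alpha}(V)$ is $[0,1]$ minus the countable set of orbit endpoints, and therefore $\dim_H R_{\boC_{2,\alpha}}=1$.

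Finally, Theorem 4 yields $s_{\boC_{2,\alpha}}=2\dim_H R_{\boC_{2,\alpha}}=2$. The one genuinely delicate point, and the step I would scrutinize most, is the dimension computation: since the $\psi_{i,\alpha}$ are honest contractions one's first expectation is a dimension-$<1$ Cantor set, and it is only the exact tiling $\psi_{1,\alpha}([0,1])\cup\psi_{2,\alpha}([0,1])=[0,1]$ with no removed interval that forces full dimension. I would therefore make the tiling claim airtight at every level — in particular that the interiors remain pairwise disjoint and no subinterval is ever deleted — since this is precisely what pins $s_{\boC_{2,\alpha}}$ to the extremal value $2$ rather than to something strictly smaller, uniformly in $\alpha\in(1,2)$.
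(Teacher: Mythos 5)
Your proposal is correct and follows essentially the same route as the paper: the paper likewise verifies that the $\psi_{i,\alpha}$ are contractions on the invariant interval $[0,1]$ satisfying the open set condition, deduces $\dim_H R_{\boC_{2,\alpha}}=1$ by the same tiling argument used for $\boC_2$ (the level-$k$ images cover $[0,1]$ with lengths shrinking to zero), and then invokes Theorem 4 to get $s_{\boC_{2,\alpha}}=2\dim_H R_{\boC_{2,\alpha}}=2$. Your extra care about the exact tiling forcing full dimension is precisely the point the paper relies on as well.
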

\begin{remark}
  The restriction on the possible values of $\alpha$ looks more like
  an artificial effect of a poor choice for the distance function 
  rather than a true property of the semigroups. 
  We believe that by choosing a ad-hoc metric and maybe
  slightly modifying the argument the proposition above can be extended
  to the half-line $[1,\infty)$.
\end{remark}
%
%
\subsection{$n=2, K=\bC$}
\label{sec:cSG}
Now we consider the case of $2\times2$ {\em complex} matrices.
The corresponding projective space is the {\em Riemann sphere}
$\bC P^1$, namely the complex plane with the addition of a point 
at infinity.

The geometry of Kleinian groups, namely of discrete subgroups of
the M\"obius group 
$PSL_2(\bC)$, is known to be extremely rich and is presumably even 
richer in case of Kleinian sub{\em semi\hskip1pt}groups. A study of such 
semigroups in a general setting is therefore way beyond the scope 
of the present paper. Here we rather state first a somehow general 
property analogous to the real case above and then, as a source
of examples, focus our attention on a particular but interesting 
case that we refer to as {\em complex projective Sierpinski gaskets}.
\begin{theorem5}
  \label{thm:SL(2,C)}
  Let $f_i$ be $m$ linear automorphisms of $C^2$ represented in coordinates
  by the matrices $A_i\in SL_2(\bC)$ and let $\psi_i$ be the induced
  elements in $PSL_2(\bC)$. Assume that there exists 
  some proper open set $V\subset\bC P^1$ invariant by the $\psi_i$ and such that, 
  for some affine chart $\varphi:\bC P^1\to\bC$, the $\psi_i$ are contractions 
  on $\varphi(\overline{V})$ with respect to the Euclidean distance and satisfy 
  the ``open set condition'' $\psi_1(V)\cap\psi_2(V)=\emptyset$. 
  Let $R_\bA=\cap_{k=1}^\infty\left(\cup_{|I|=k}\psi_{I}(V)\right)$ 
  be the corresponding residual set. Then $2\dim_HR_\bA=s_\bA$.
\end{theorem5}
The proof of this theorem is almost verbatim the same of Theorem 4
and, correspondingly, we have the following corollary:
\begin{corollary}
 Under the hypotheses of the previous theorem, $2\leq s_\bA\leq4$. 
\end{corollary}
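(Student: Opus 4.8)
The plan is to read off both inequalities directly from Theorem 5, which asserts $s_\bA = 2\dim_H R_\bA$, by confining $\dim_H R_\bA$ to the interval $[1,2]$. The upper estimate is immediate from the ambient geometry, in exact parallel with the corollary to Theorem 4; the lower estimate carries the genuine content and reflects a feature special to the complex (conformal, two-real-dimensional) setting.

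For the upper bound I would simply note that $R_\bA$ is a subset of $\bC P^1$, which as a real manifold is two-dimensional and hence has Hausdorff dimension $2$, so that $\dim_H R_\bA \leq 2$. Theorem 5 then gives $s_\bA = 2\dim_H R_\bA \leq 4$. This is the verbatim analogue of the argument behind the corollary to Theorem 4, with the line $\bR$ replaced by the sphere $\bC P^1$.

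For the lower bound I would argue that $R_\bA$ is a nondegenerate continuum and then invoke the elementary fact that any connected set $E$ with at least two distinct points has $\dim_H E \geq 1$: orthogonally projecting $E$ onto the line through two of its points gives a connected subset of that line containing both projections, hence a nondegenerate interval; since projection is $1$-Lipschitz one gets $\cH^{1}(E)\geq\cH^{1}(\mathrm{proj}\,E)>0$, so $\dim_H E\geq1$. Granting that $R_\bA$ is such a continuum yields $\dim_H R_\bA \geq 1$ and therefore $s_\bA \geq 2$. The connectedness I would extract from the nested cut-out structure of the construction: setting $K_k = \overline{\cup_{|I|=k}\psi_{I}(V)}$, one has $R_\bA=\cap_{k\geq1}K_k$ with $K_{k+1}\subseteq K_k$, and a nested intersection of continua in a compact Hausdorff space is again a continuum, so it suffices that each $K_k$ be connected.

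The main obstacle is precisely the connectedness of the $K_k$, i.e. that the generation-$k$ pieces $\psi_I(V)$ touch (share boundary points) rather than lie at positive distance from one another. This touching is the defining feature of a \emph{gasket} and does hold for the Apollonian and Sierpinski examples motivating this section, where adjacent Soddy (resp. triangular) pieces are mutually tangent. It is, however, not forced by the bare hypotheses of Theorem 5 — contractions plus the open set condition $\psi_1(V)\cap\psi_2(V)=\emptyset$: strictly separated affine contractions of a disk, which lie in $SL_2(\bC)$ and satisfy all the stated hypotheses, yield a totally disconnected Cantor-type $R_\bA$ whose dimension can be driven below $1$. Thus the lower bound $s_\bA\geq2$ genuinely rests on the continuum (gasket) structure and is not a formal consequence of Theorem 5 alone, so I would state it under the additional assumption — automatic in the motivating cases — that $R_\bA$ is connected.
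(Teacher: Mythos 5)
Your upper bound coincides with the only argument the paper actually has: the corollary is stated with no proof at all, the word ``correspondingly'' pointing back to the corollary of Theorem 4, whose one-line proof (Hausdorff dimension of a subset of $\bR$ is at most $1$) transposes to ``Hausdorff dimension of a subset of $\bC P^1$ is at most $2$'' and, via $s_\bA=2\dim_H R_\bA$, yields $s_\bA\leq4$ --- exactly what you wrote. For the lower bound $2\leq s_\bA$ the paper supplies nothing, and your analysis of it is the substantive part of your proposal.

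On that lower bound you are right, and your objection is not a defect of your proof but of the statement itself: it is not a formal consequence of the hypotheses of Theorem 5. Your counterexample is valid: $z\mapsto\lambda z+a$ lifts to
$\begin{pmatrix}\sqrt{\lambda}&a/\sqrt{\lambda}\cr 0&1/\sqrt{\lambda}\cr\end{pmatrix}\in SL_2(\bC)$,
and two such maps with $|\lambda|$ small and separated images inside a disk $V$ satisfy every stated hypothesis (invariant proper open set, contractions in the affine chart, open set condition) while producing a Cantor residual set with $\dim_H R_\bA=\log 2/\log(1/|\lambda|)<1$, hence $s_\bA<2$ by Theorem 5 itself. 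So the corollary, read literally, is false, and your repair is the right one: if $R_\bA$ is a nondegenerate continuum, the $1$-Lipschitz projection argument gives $\cH^1(R_\bA)>0$, so $\dim_H R_\bA\geq1$ and $s_\bA\geq2$; this added hypothesis is automatic in the cases the corollary is actually invoked for (the Apollonian gasket and the complex projective Sierpinski gaskets, whose residual sets contain circular arcs). Two cautions if you formalize this. First, the paper defines $R_\bA$ as an intersection of unions of \emph{open} sets $\psi_I(V)$, whereas your nested-continua argument needs the closed sets $K_k=\overline{\cup_{|I|=k}\psi_I(V)}$; since Hausdorff dimension can drop when passing from a closure to the set, you should either work with the closed version of $R_\bA$ throughout or observe that for these gaskets the boundary arcs themselves survive at every generation and lie in $R_\bA$. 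Second, connectedness of each $K_k$ must be extracted from the tangency pattern of the generation-$k$ pieces (the ``gasket'' structure), which is extra combinatorial information beyond the open set condition; in the paper's examples it follows from condition 2 of the definition of complex projective Sierpinski gasket, which forces adjacent pieces to share a vertex.
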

%
\begin{definition}
  Let $f_1,f_2,f_3$ be volume-preserving linear automorphisms of $\bC^2$ 
  with real spectrum and denote by $A_1,A_2,A_3\in\SL_2(\bC)$ the corresponding 
  matrices with respect to some coordinate system and by 
  $\psi_1,\psi_2,\psi_3\in PSL_2(\bC)$ their corresponding projective 
  automorphisms acting on the Riemann sphere. Let $[e_i]\in\CPo$ be
  a fixed point for $\psi_i$ corresponding to the largest eigenvalue 
  $\lambda\geq1$ of $f_i$.
  We say that the semigroup $\boldsymbol{F}$ generated by the $f_i$ 
  (or, equivalently, the semigroup $\bA$ generated by the $A_i$) is a 
  {\em complex projective Sierpinski gasket} if 
  the following conditions are satisfied:
  \begin{enumerate}
  \item $[f_i(e_j)]=[f_j(e_i)]$ for every $i,j$ with $i\neq j$;
  \item the circle $\Gamma_k$ passing through $[e_i]$, $[e_j]$ 
    (where $i,j,k$ is a permutation of $1,2,3$) and $[f_i(e_j)]$ 
    is invariant under both $f_i$ and $f_j$, $i\neq j$;
  \item $[f_k(e_i)]$ and $[f_k(e_j)]$ belong to the same connected
    component of $\CPo\setminus\Gamma_k$ for every permutation $(i,j,k)$
    of $(1,2,3)$.
  \item the circles $\Gamma_k$ do not intersect in the interior 
    of the curvilinear triangle $T_\bA\subset\bC P^1$ having 
    as vertices the $[e_i]$ and as sides the segments of the $\Gamma_k$ 
    with vertices the points $[e_i]$ and $[e_j]$ containing 
    the point $[f_i(e_j)]$.
  \end{enumerate}
  %
\end{definition}
By construction every such gasket $\bA$ is free and satisfies the 
open set condition with respect to the interior of $T_\bA$.
Since the M\"obius group $PSL_2(\bC)$ is transitive on triples of distinct
points, we assume without loss of generality in the rest of this section
that $T_\bA$ has vertices $[e_1]=[1:1]$, $[e_2]=[i:1]$, $[e_3]=[-1:1]$ 
with respect to homogeneous coordinates $[z:w]$ and use the affine
chart $w=1$ with complex coordinate $z=x+iy$ for all calculations.
%
\begin{proposition}
  Let $f_1,f_2,f_3$ be volume-preserving linear automorphisms with 
  real spectrum having respectively
  $e_1=(1,1)$, $e_2=(i,1)$, $e_3=(-1,1)$ as eigenvectors corresponding
  to the largest eigenvalue and assume that
  $$
  \begin{cases}
    \psi_1([e_3])=\psi_3([e_1])=u+iv,\\
    \psi_2([e_1])=\psi_1([e_2])=is,\\
    \psi_3([e_2])=\psi_2([e_3])=-u+iv.\\
  \end{cases}
  $$
  A necessary condition for $f_1,f_2,f_3$ to generate 
  a Sierpinski gasket
  {\em symmetric with respect to the imaginary axes}, namely
  such that $f_1(z)=\overline{f_2(-\overline{z})}$ and 
  $f_3(z)=\overline{f_3(-\overline{z})}$, is that 
  $\psi_1([e_3])\in\Gamma$, where $\Gamma$ is the circle
  \begin{equation}
    \label{eq:symm}
    x^2+y^2-x(1-s^2)-s^2=0.
  \end{equation}
  For $s=0$ the condition is sufficient for $u\in[1/5,\alpha]$, 
  where $\alpha\simeq0.651$.
\end{proposition}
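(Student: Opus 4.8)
The plan is to regard each generator as a M\"obius transformation fixed by three incidence conditions, isolate the one scalar whose reality encodes the real--spectrum hypothesis, and show that this reality condition is exactly the equation of $\Gamma$; sufficiency for $s=0$ then reduces to checking, along the resulting one--parameter family, that the four defining conditions of a complex projective Sierpinski gasket survive precisely on $u\in[1/5,\alpha]$. Throughout I work in the affine chart $w=1$, so that $[e_1],[e_2],[e_3]$ become $1,i,-1$.

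First I would use the symmetry to put the data in the stated normal form. Reflection $R(z)=-\overline z$ across the imaginary axis fixes $i$ and interchanges $1\leftrightarrow-1$; requiring the gasket (hence its set of shared images $\psi_i([e_j])=\psi_j([e_i])$, which is condition~1 of the definition) to be $R$--invariant forces the image on the $[e_1][e_2]$ side to lie on the axis, i.e. to be purely imaginary $is$, and the images on the two mirror--paired sides to be mirror points $u+iv$ and $-u+iv$. This is exactly the content of the ``assume that'' clause, and it lets me analyse a single generator: $\psi_2$ is the mirror of $\psi_1$ and the third generator is mirror--invariant, so the reality condition need only be imposed once. Since a M\"obius map is determined by three point--image pairs, $\psi_1$ is the unique element of $PSL_2(\bC)$ with $\psi_1(1)=1$, $\psi_1(i)=is$, $\psi_1(-1)=u+iv$, which I would write from the cross--ratio identity $\frac{(w-1)(is-(u+iv))}{(w-(u+iv))(is-1)}=-i\,\frac{z-1}{z+1}$.

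The heart of necessity is the multiplier of $\psi_1$ at the fixed point $1$. Because $f_1$ is volume preserving with real spectrum and $[e_1]$ is the eigendirection of the largest eigenvalue $\lambda_1$, the induced map is attracting at $1$ with real positive multiplier $\psi_1'(1)=\lambda_1^{-2}$; this is the reality of the quantity whose modulus the proof of Theorem~4 identifies with $\|A_1\|^{-2}$. Differentiating the cross--ratio identity at $z=1$ gives $\psi_1'(1)=\frac{-i\,(is-1)\bigl(1-(u+iv)\bigr)}{2\bigl(is-(u+iv)\bigr)}$, and imposing $\operatorname{Im}\psi_1'(1)=0$ is one real equation in $(u,v)$. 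Simplifying it (I have checked that it passes through $u+iv=1$ and $u+iv=-s^2$, and that for $s=0$ it collapses to $\operatorname{Re}\bigl(1/(u+iv)\bigr)=1$) yields
\[
u^2+v^2-u(1-s^2)-s^2=0,
\]
i.e. $\psi_1([e_3])=u+iv\in\Gamma$, which is the asserted necessary condition.

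For sufficiency at $s=0$ I would set $v=\sqrt{u(1-u)}$, build $f_1,f_2,f_3$ from the explicit $\psi_i$, and verify the four conditions in turn: condition~1 holds by the shared--image normalisation; condition~2 follows because each $\psi_i$ now has real multiplier and two fixed points, and an invariant circle through a vertex is forced through the second fixed point, which is the second intersection of the two adjacent circles $\Gamma_k$; conditions~3 and~4 are the genuinely geometric constraints that the arcs $\Gamma_k$ bound disjoint curvilinear regions inside $T_\bA$. I expect the main obstacle to be condition~4: the range $u\in[1/5,\alpha]$ should be exactly the interval on which $\Gamma_1,\Gamma_2,\Gamma_3$ stay pairwise non--crossing, equivalently on which all three $\psi_i$ are non--expanding in the chart and strictly contracting in the open interior so that Theorem~5 applies. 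I would identify the lower endpoint $u=1/5,\,v=2/5$ with the mutually tangent, parabolic Apollonian configuration of Fig.~\ref{fig:apollonian}, and obtain $\alpha\simeq0.651$ as the relevant root of the polynomial produced by setting a circle--circle intersection discriminant to zero; pinning down $\alpha$ as that root and checking contractivity on the open interval is the step I expect to demand the most care.
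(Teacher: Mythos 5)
Your necessity argument is correct, and it reaches (\ref{eq:symm}) by a genuinely different route than the paper. The paper imposes condition~2 of the gasket definition in the form of concyclicity: each tetruple $[e_1]$, $[e_3]$, $\psi_1([e_3])$, $\psi_{11}([e_3])$ (and likewise with $[e_2]$) must lie on a single circle. You instead impose that the multiplier $\psi_1'(1)$ be real, which is what the volume-preserving, real-spectrum hypothesis forces ($\psi_1'(1)=\lambda_1^{-2}$). These two conditions are secretly the same: conjugating so that the fixed vertex goes to $\infty$ turns $\psi_1$ into $w\mapsto aw+b$ and turns the tetruple condition into collinearity of $w_0$, $aw_0+b$, $a^2w_0+ab+b$, whose consecutive differences are in ratio $a$; collinearity holds iff $a\in\bR$, iff $\psi_1'(1)\in\bR$. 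I verified your multiplier formula and the reduction: with $p=u+iv$, the imaginary part of (numerator times conjugate denominator) is a positive multiple of $u^2+v^2-u(1-s^2)-s^2$, so the vanishing locus is exactly $\Gamma$. (This also explains why the paper finds that both tetruples produce one and the same equation, and why the outcome is insensitive to the fact that the statement's assignment of $is$ versus $u+iv$ to the two touch-points is swapped relative to the worked examples: the two multipliers differ only by a sign.) Your route is shorter than the paper's ``long but straightforward'' calculation; what the paper's route buys is that the condition is tied directly to the gasket axiom (invariance of the circles $\Gamma_k$) rather than to the standing spectral hypothesis.

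The sufficiency half, however, has a genuine gap: beyond being only a plan, it misidentifies the constraint that is binding at the lower endpoint. For $s=0$ and $(u,v)$ on $\Gamma$ one computes that the multiplier at the fixed vertex has modulus $\frac{1}{2}\sqrt{(1-u)/u}$, which exceeds $1$ exactly when $u<1/5$; so what fails below $1/5$ is the hypothesis that $e_1$ is the eigenvector of the \emph{largest} eigenvalue of $f_1$ (the fixed vertex becomes repelling). This is the paper's characterization, and it is not the circle-crossing mechanism you propose; that mechanism is the correct one only at the upper endpoint, where your discriminant computation for the tangency of $\Gamma_{12}$ and $\Gamma_{13}$ agrees with the paper's description of $u=\alpha$. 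Moreover, you characterize the admissible interval as the one on which the $\psi_i$ are contracting ``so that Theorem 5 applies'': that cannot be right, since $u=1/5$ is the parabolic Apollonian configuration, which belongs to $[1/5,\alpha]$ and is a Sierpinski gasket, yet there the $\psi_i$ are only non-expanding and Theorem 5's hypotheses fail. Being a Sierpinski gasket is a matter of conditions 1--4 of the definition, not of contractivity. Relatedly, your justification of condition~2 via ``two fixed points'' degenerates at $u=1/5$ (multiplier $1$, a single fixed point), whereas the concyclicity formulation survives; if you carry out the verification, argue condition~2 through concyclicity rather than through the second fixed point.
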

\begin{proof}
  A long but straightforward direct calculation shows that condition 
  (\ref{eq:symm}) is the only
  one coming from imposing that each one of the tetruples $[e_1]$, $[e_3]$, 
  $\psi_1([e_3])$, $\psi_{11}([e_3])$ and $[e_1]$, $[e_2]$, $\psi_1([e_2])$, 
  $\psi_{11}([e_2])$ identifies a single circumference.
  No further condition comes from $\psi_3$ and by symmetry we obtain an 
  equivalent condition with respect to $\psi_2$.
  
  When $s=0$ another direct calculation shows that if $u<1/5$ then $e_1$
  is not anymore the eigenvector of $f_1$ corresponding to its
  largest eigenvalue. When $u=\alpha$ the circles $\Gamma_{13}$ and 
  $\Gamma_{12}$ are tangent to each other and for $u>\alpha$ they intersect inside 
  $T_\bA$.
\end{proof}
%
%
\begin{example}
  Let us give a short survey of the kind of geometry we meet in case
  of complex projective Sierpinski gaskets symmetric with respect to 
  the imaginary axes. For $u=16/25\simeq\alpha$ we get the gasket
  $$
  A_1=\frac{1}{\sqrt{544}}\begin{pmatrix}20&12i\cr -3i&29\cr\end{pmatrix},
  A_2=\frac{1}{\sqrt{24}}\begin{pmatrix}4&4\cr 1&7\cr\end{pmatrix},
  A_3=\frac{1}{\sqrt{24}}\begin{pmatrix}4&-4\cr -1&7\cr\end{pmatrix}.
  $$
  \begin{figure}
    \centering
    \begin{tabular}{cc}
      \includegraphics[width=6.4cm]{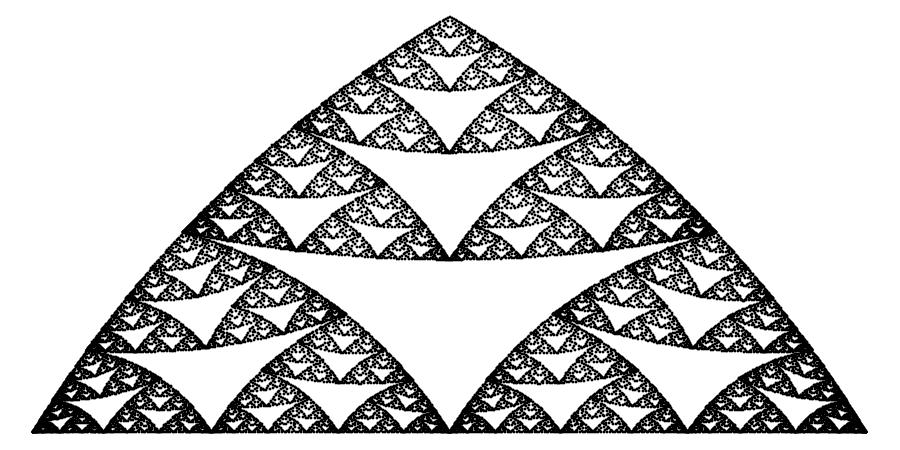}&\includegraphics[width=6.4cm]{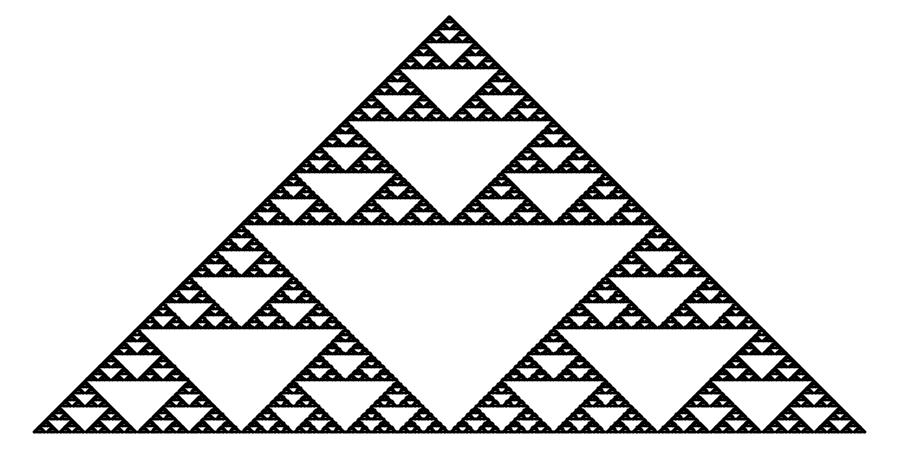}\cr
      a&b\cr
      \noalign{\medskip}
      \includegraphics[width=6.4cm]{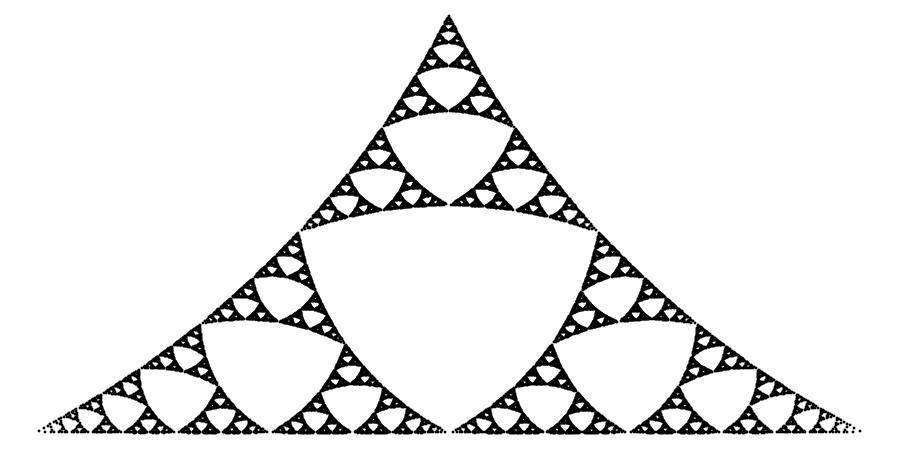}&\includegraphics[width=6.4cm]{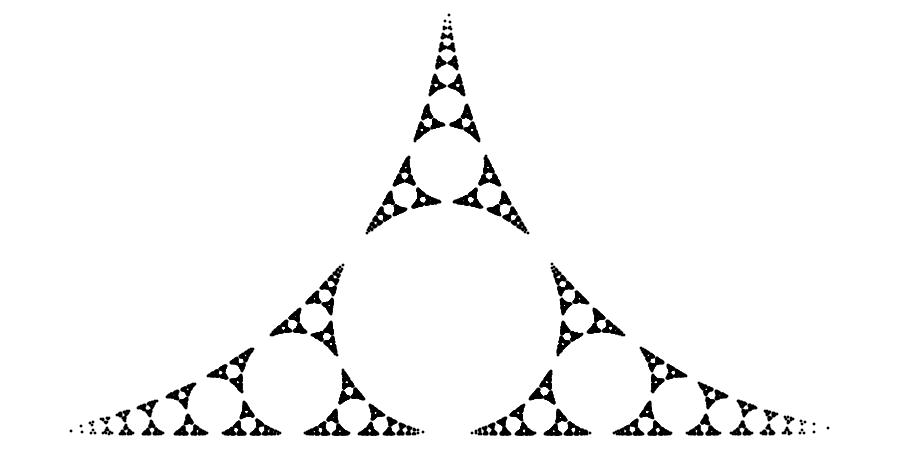}\cr
      c&d\cr
    \end{tabular}
    \caption{%
      \footnotesize
      Limit sets of complex self-projective Sierpinski gaskets: 
      (a) $u=16/25$, $s_\bA\simeq2.88$, $\dim R_\bA\simeq1.44$;
      (b) $u=1/2$, $s_\bA=2\log_23$, $\dim R_\bA=\log_23$;
      (c) $u=9/25$, $s_\bA\simeq2.88$, $\dim R_\bA\simeq1.44$;
      (d) $u=1/5$, $s_\bA\simeq2.61$, $\dim R_\bA\simeq1.305$.
      In figure for each case we show the 19683 points of the
      orbit of a random point under the action of all matrices 
      $A_I$ of the gasket with $|I|=9$.
    }
    \label{fig:cool}
  \end{figure}
  In Fig.~\ref{fig:cool} we show the orbit of a point under the action
  of the semigroup $\bA$ generated by the $A_i$. The triangle $T_\bA$
  is convex and, correspondingly, the triangle 
  $Z_\bA=T_\bA\setminus(\cup_{i=1}^3T_{\bA A_i})$ is concave.
  Each angle is almost zero because the sides of the triangle are almost
  tangent to each other, which corresponds to the fact that the limit
  value $\alpha$ is close to $16/25$.
  The restriction to $T_\bA$ of corresponding maps $\psi_i$ are contractive 
  so Theorem 5 applies. A rough numerical evaluation of the exponent
  of $\bA$ gives $s_\bA\simeq2.88$, so that $\dim R_\bA\simeq1.44$.

  By increasing $u$ the curvature of the sides increases (we consider negative
  the curvature of concave sides) until it gets zero for $u=1/2$. The 
  semigroup is now generated by
  $$
  A_1=\frac{1}{\sqrt{2}}\begin{pmatrix}1&i\cr 0&2\cr\end{pmatrix},
  A_2=\frac{1}{\sqrt{2}}\begin{pmatrix}1&1\cr 0&2\cr\end{pmatrix},
  A_3=\frac{1}{\sqrt{2}}\begin{pmatrix}1&-1\cr 0&2\cr\end{pmatrix}.
  $$
  In this case
  all sides are segments of straight lines and the gasket is diffeomorphic
  to the standard Sierpinski gasket in $\bR^2$. It is easy to prove that
  $s_\bA=2\log_23$ and, correspondingly, we get the well-known
  result $\dim R_\bA=\log_23$.

  By increasing $u$ further the curvature of the sides keeps 
  increasing and therefore $T_\cA$ becomes convex. For $u=9/25$
  the semigroup is generated by
  $$
  A_1=\frac{1}{45}\begin{pmatrix}3&6i\cr 2i&11\cr\end{pmatrix},
  A_2=\frac{1}{\sqrt{24}}\begin{pmatrix}3&3\cr -1&7\cr\end{pmatrix},
  A_3=\frac{1}{\sqrt{24}}\begin{pmatrix}3&-3\cr 1&7\cr\end{pmatrix}.
  $$
  The corresponding $\psi_i$ are contractive over $T_\cA$ so that
  Theorem 5 still applies. A rough numerical evaluation of the exponent
  gives $s_\bA\simeq2.88$ so that $\dim R_\bC\simeq1.44$.

  At the extremal value $u=1/5$ every angle of the triangle 
  is equal to $\pi$, namely every triangle $Z_\bA$ is actually
  a circle. Indeed this Sierpinski gasket is actually
  the Apollonian gasket $\bA_3$, introduced in the Motivational
  Example 2 and generated by
  $$
  A_1=\begin{pmatrix}0&i\cr i&2\cr\end{pmatrix},
  A_2=\frac{1}{2}\begin{pmatrix}\phantom{-}1&1\cr -1&3\cr\end{pmatrix},
  A_3=\frac{1}{2}\begin{pmatrix}1&-1\cr 1&\phantom{-}3\cr\end{pmatrix}.
  $$
  This time the corresponding $\psi_i$ are only non-expansive,
  which corresponds to the fact that $\bA_3$ are parabolic.

  Finally we point out that all these gaskets are fast. Here we
  outline the argument in case of the Apollonian gasket $\bA_3$ but 
  the same argument holds for all complex projective gaskets symmetric 
  with respect to the imaginary axes. 
  Note first of all that it is straightforward proving by induction that 
  $\|A_I\|=|(A_I)_{22}|$ for every matrix $A_I\in\bA_3$. 
  Now consider the case
  $$
  A_I=\begin{pmatrix}\alpha&\beta\cr \gamma&\delta\cr\end{pmatrix}, 
  A_{23}=\frac{1}{2}\begin{pmatrix}1&1\cr 1&5\cr\end{pmatrix}, 
  A_L=\begin{pmatrix}a&b\cr c&d\cr\end{pmatrix}.
  $$
  Then 
  $$
  A_{23L}=\frac{1}{2}\begin{pmatrix}a+c&b+d\cr a+5c&b+5d\cr\end{pmatrix},
  $$
  so that 
  $\|A_{I23L}\|=\frac{1}{2}|\gamma(b+d)+\delta(b+5d)|
  \geq2|\delta||d|\geq\frac{1}{3}\|A_I\|\|A_{23L}\|$.
  The case of $A_{32}$ is completely analogous to this. The remaining four
  combinations are instead analogous to the case of 
  $$
  A_{12}=\frac{1}{2}\begin{pmatrix}-i&3i\cr -2+i&6+i\cr\end{pmatrix}.
  $$
  This time
  $$
  A_{12L}=\frac{1}{2}
  \begin{pmatrix}-ia+3ic&-ib+3id\cr (-2+i)a+(6+i)c&(-2+i)b+(6+i)d\cr\end{pmatrix}
  $$
  and
  $\|A_{I12L}\|=\frac{1}{2}|\gamma(3id-ib)+\delta((i-2)b+(6+i)d)|
  \geq2|\delta||d|\geq\frac{1}{5}\|A_I\|\|A_{12L}\|$.
  Hence $\bA_3$ is a fast gasket with coefficient not smaller than $1/5$.

%
\end{example}
\subsection{$n\geq3, K=\bR$}
\label{sec:rSG}
%
%
%
%
In the real case, projective maps induced by at least $3\times3$ matrices
are not conformal
and we could find any simple way to prove analogues of Theorems 4
and 5. The non-triviality of the matter is granted by the 
well-known non-triviality of the theory of real self-affine 
sets (e.g. see~\cite{Fal88,FL98,ABVW10,FM11}). Indeed,
since $PSL^\pm_n(\bR)$ contains a subgroup
homeomorphic to the $(n-1)$-dimensional affine group, self-projective
sets are at least as non-trivial as the self-affine ones
(see Section~\ref{sec:aSG} for more details).

Because of this and in order to provide motivation for the interest
of real self-projective sets we restrict our attention to the
following particular case:
\begin{definition}
  Let $\bF=\langle f_1,\dots,f_n\rangle$ be a free semigroup of 
  volume-preserving linear automorphisms of $\bR^n$ and 
  $\psi_1,\dots,\psi_n\in PSL_n(\bR)$ the induced projective automorphisms 
  of $\bR P^{n-1}$. 
  Let $\cE=\{e_1,\dots,e_n\}$ be a $n$-frame of $\bR^n$ and
  $\cE^*=\{\varepsilon_1,\dots,\varepsilon_n\}$ its dual frame.
  We say that $\bF$
  is a {\em real projective Sierpinski gasket} over $\cE$ if
  the following conditions are satisfied:
  \begin{enumerate}
  \item $f_i=A_{ij}^k e_k\otimes\varepsilon^j$ with $A_{ij}^k\geq0$;
    \item $f(e_i)=\lambda_i e_i$, with $\lambda_i=\max_{1\leq j\leq n}\{A_{ij}^j\}$;
    \item $f_i(e_j)=\alpha e_i+\beta e_j$ with $\alpha,\beta>0$;
    \item $\psi_i([e_j])=\psi_j([e_i])$, $i\neq j$.
  \end{enumerate}
  We say that $\cE$ is a {\em proper frame} for $\bF$.
  More generally given $m<n$ of the $f_i$ we say that they are
  a Sierpinski gasket if there exist automorphisms $f_{m+1},\dots,f_n$ such
  that $\langle f_1,\dots,f_n\rangle$ is a Sierpinski gasket.
\end{definition}
Note that conditions 1--3 above imply that $\sp\{e_i\}$ is the only 
eigenspace of $f_i$ corresponding to its largest eigenvector, so that
every proper frame for $\bF$ identifies the same $n$ points $[e_i]$ on $\RPnm$.

Denote by $C(\cE)$ the {\em positive cone} over $\c$, namely the convex hull
of the set $\cup_{i=1}^n\{\lambda e_i,\lambda>0\}$. 
Then its projection on $\RPnm$ is the same for every proper frame
of $\bF$ and we denote it by $T_\bF$. This set is a $(n-1)$-simplex 
with the $n$ points $[e_i]$ as vertices. By points 2 and 3 of the
definition above, $[e_i]$
is a fixed point for $\psi_i$ and each set $T_{\bF f_i}\bydef\psi_i(T_\bF)$ is a 
$(n-1)$-simplex having in common with every other $T_{\bF f_j}$, $i\neq j$,
the vertex $\psi_i([e_j])$. Like in case of the $(n-1)$-dimensional 
standard Sierpinski gasket in $\bR^{n-1}$, the difference between
$T_\bF$ and $\cup_{i=1}^n T_{\bF f_i}$ is the interior of a convex 
polyhedron with $n(n-1)/2$ vertices that we denote by $Z_\bF$.

By repeating this procedure recursively we see that, at every step $k>0$,
$$
T_{k,\bF} \bydef \bigcup_{|I|=k}T_{\bF f_I} = T_\bF\setminus\left[\bigcup_{|I|<k}Z_{\bF f_I}\right]
$$
It is standard to call $R_\bF=\cap_{k\geq0}T_{k,\bF}$ the {\em residual set} of $\bF$.

For sake of simplicity and conciseness we limit our discussion
to the following subclass of Sierpinski gaskets:
\begin{definition}
  We say that a Sierpinski gasket $\boldsymbol{F}=\langle f_1,\dots,f_m\rangle$ 
  is {\em simple} when each $f_i$ either has only one eigenvalue 
  (first kind) or has exactly two eigenvalues and the eigenspace 
  corresponding to the larger one is 1-dimensional (second kind).
\end{definition}
\begin{example}
  \label{ex:sg}
  The most important 1-parameter family of real projective
  Sierpinski gaskets we discuss in this paper is
  $F_n^\alpha=\{f_1^\alpha,\dots,f^\alpha_n\}$,
  $\alpha\geq1$, 
  $$
  f_i^\alpha(e_j)=\alpha^{-\frac{1}{3}}
  \begin{cases}
    \alpha e_i,&i=j,\cr
    e_i+e_j,\!\!\!\!&i\neq j.\cr
  \end{cases}
  $$

  For $n=3$ the $f_i^\alpha$ are represented, with respect to any proper
  frame, by the matrices
  $$
  A_1^{\alpha,3}=\alpha^{-\frac{1}{3}}\begin{pmatrix}\alpha&1&1\cr 0&1&0\cr 0&0&1\cr\end{pmatrix},
  A_2^{\alpha,3}=\alpha^{-\frac{1}{3}}\begin{pmatrix}1&0&0\cr 1&\alpha&1\cr 0&0&1\cr\end{pmatrix},
  A_3^{\alpha,3}=\alpha^{-\frac{1}{3}}\begin{pmatrix}1&0&0\cr 0&1&0\cr 1&1&\alpha\cr\end{pmatrix}.
  $$
  As already shown in the introduction, for $\alpha=1$ we get the cubic gasket 
  $\boC_3$ and for $\alpha=2$ the (real projective generalization of the) 
  standard Sierpinski gasket $\boS_3$.
\end{example}
Consider now the dual semigroup $\boldsymbol{F}^*\subset Aut((\bR^n)^*)$
of a Sierpinski gasket.
\begin{proposition}
  Let $\bF$ be a simple Sierpinski gasket over a $n$-frame 
  $\cE=\{e_1,\dots,e_n\}$ of $\bR^n$ generated by maps 
  $$
  f_i(e_i)=\alpha_i e_i,\;f_i(e_j)=\beta_{ij}e_i+\gamma_i e_i.
  $$ 
  Then $\bF^*$ is a simple Sierpinski gasket over the frame
  $\cH=\{\eta^1,\dots,\eta^n\}$ of $(\bR^n)^*$ defined by
  \begin{equation}
    \label{eq:H}
    \eta^i=\sum_{j\neq i}\beta_{ij}\varepsilon^j+(\alpha_i-\gamma_i)\varepsilon^i.
  \end{equation}
\end{proposition}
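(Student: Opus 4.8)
The plan is to realize $\bF^*$ concretely and then simply \emph{check the four defining conditions of a simple Sierpinski gasket} for it over the frame $\cH$ of \eqref{eq:H}. Recall $\bF^*$ is generated by the adjoints $f_i^*$, characterized by $(f_i^*\phi)(v)=\phi(f_i(v))$, whose matrices in the dual basis $\cE^*$ are the transposes of those of the $f_i$. Freeness of $\bF^*$ is inherited from that of $\bF$ because $f\mapsto f^*$ is an injective anti-homomorphism (a relation in $\bF^*$ gives, upon reversing words, one in $\bF$), and volume-preservation is inherited since $\det f_i^*=\det f_i$. The starting point of the computation is the explicit action on $\cE^*$: writing $f_i(e_i)=\alpha_i e_i$ and $f_i(e_j)=\beta_{ij}e_i+\gamma_i e_j$ for $j\neq i$, a one-line dualization gives
\[
f_i^*(\varepsilon^i)=\alpha_i\varepsilon^i+\sum_{k\neq i}\beta_{ik}\varepsilon^k,\qquad f_i^*(\varepsilon^l)=\gamma_i\varepsilon^l\quad(l\neq i).
\]

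First I would verify condition~2, that each $\eta^i$ is the eigenvector of $f_i^*$ for its top eigenvalue. Since $f_i$ and $f_i^*$ share the spectrum ($\alpha_i$ with multiplicity $1$, $\gamma_i$ with multiplicity $n-1$) and $\alpha_i\geq\gamma_i$ (as $e_i$ is the top eigenvector of $f_i$), it suffices to solve $f_i^*(\phi)=\alpha_i\phi$; matching coefficients against the display above forces exactly $\phi\propto(\alpha_i-\gamma_i)\varepsilon^i+\sum_{k\neq i}\beta_{ik}\varepsilon^k=\eta^i$. This simultaneously shows simplicity is preserved: $f_i^*$ has the same eigenvalues and multiplicities as $f_i$, so it is of the first kind ($\alpha_i=\gamma_i$) or the second kind ($\alpha_i>\gamma_i$, top eigenspace $\sp\{\eta^i\}$ one-dimensional) exactly when $f_i$ is.

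The heart of the argument is a direct computation of $f_i^*(\eta^j)$ for $i\neq j$, which yields conditions~1 and~3 at once. Substituting \eqref{eq:H} into the action above and collecting the $\varepsilon^i$, $\varepsilon^j$ and remaining $\varepsilon^l$ coefficients, I expect the clean identity
\[
f_i^*(\eta^j)=\beta_{ji}\,\eta^i+\gamma_i\,\eta^j ,
\]
valid with \emph{no} further hypotheses. This is precisely condition~3 for $\bF^*$ over $\cH$, with strictly positive coefficients $\beta_{ji},\gamma_i>0$, and together with $f_i^*(\eta^i)=\alpha_i\eta^i$ it shows the matrix of $f_i^*$ in the frame $\cH$ has non-negative entries (condition~1) and top diagonal entry $\alpha_i$ (condition~2). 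The pleasant point is that condition~4 is \emph{self-dual}: from $f_i^*(\eta^j)=\beta_{ji}\eta^i+\gamma_i\eta^j$ and $f_j^*(\eta^i)=\beta_{ij}\eta^j+\gamma_j\eta^i$, the requirement $\psi^*_i([\eta^j])=\psi^*_j([\eta^i])$ reduces to the proportionality $\beta_{ji}\gamma_j=\gamma_i\beta_{ij}$, i.e. $\beta_{ij}\beta_{ji}=\gamma_i\gamma_j$ — which is exactly what the original condition~4, $\psi_i([e_j])=\psi_j([e_i])$, asserts for $\bF$. Thus condition~4 transfers for free.

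The one step that is not a formal manipulation — and the place I expect the real work to be — is checking that $\cH$ is a genuine frame, i.e. that the $\eta^i$ are linearly independent, equivalently that the matrix in \eqref{eq:H} (diagonal $\alpha_i-\gamma_i$, off-diagonal entries $\beta_{ij}$) is non-singular. Geometrically this says the $[\eta^i]$ are the vertices of a non-degenerate simplex in the dual space. I would deduce it from the standing non-degeneracy hypotheses — strict positivity of the $\beta_{ij}$ together with $\cE$ being a proper frame, so that $T_\bF$ is an honest $(n-1)$-simplex — rather than from conditions~1--4 alone, since the purely algebraic relations (including $\beta_{ij}\beta_{ji}=\gamma_i\gamma_j$) are not by themselves enough to prevent the determinant from vanishing. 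Once invertibility is in hand, the four verified conditions exhibit $\bF^*$ as a simple Sierpinski gasket over $\cH$, completing the proof.
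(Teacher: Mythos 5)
Your proposal is correct and takes essentially the same route as the paper: the paper's entire proof consists of the direct computation of the two identities $f_i^*(\eta^i)=\alpha_i\eta^i$ and $f_i^*(\eta^k)=\beta_{ki}\eta^i+\gamma_i\eta^k$, which is exactly the heart of your argument (your condition~3 identity and top-eigenvector check). The extra bookkeeping you supply — freeness and volume-preservation of $\bF^*$, the transfer of condition~4 via $\beta_{ij}\beta_{ji}=\gamma_i\gamma_j$, and especially the flagged need to verify that the $\eta^i$ are actually linearly independent — is left entirely implicit in the paper's proof, so your version is, if anything, the more careful one.
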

\begin{proof}
  %
  %
  By direct calculation we see that
  $$
  f_i^*(\eta^i)=
  \sum_{j\neq i}\beta_{ij}f_i^*(\varepsilon^j)
  +
  (\alpha_i-\gamma_i)f_i^*(\varepsilon^i)=
  $$
  $$
  =
  \sum_{j\neq i}\beta_{ij}\varepsilon^j
  +
  (\alpha_i-\gamma_i)(\alpha_{i}\varepsilon^i+\sum_{j\neq i}\beta_{ij}\varepsilon^j)=
  $$
  $$
  =
  \alpha_{i}\left(\sum_{j\neq i}\beta_{ij}\varepsilon^j
  +
  (\alpha_i-\gamma_i)\varepsilon^i\right)=\alpha_i\eta^i
  $$
  and 
  $$
  f_i^*(\eta^k)=
  \sum_{j\neq k}\beta_{kj}f_i^*(\varepsilon^j)
  +
  (\alpha_k-\gamma_k)f_i^*(\varepsilon^k)=
  $$
  $$
  =\sum_{j\neq k,i}\beta_{kj}\varepsilon^j
  +\beta_{ki}\left(
  \sum_{j\neq i}\beta_{ij}\varepsilon^j
  +
  \alpha_i\varepsilon^i\right)
  +
  (\alpha_k-\gamma_k)\varepsilon^k = \beta_{ki}\eta^i+\gamma_i\eta^k.
  $$
\end{proof}
%
%
\begin{lemma}
  \label{thm:induction}
  Let $\boldsymbol{F}=\langle f_i\rangle\subset Aut(V^n)$
  be a simple Sierpinski gasket over $\cE=\{e_i\}$.
  Then the following inequalities hold:
  \begin{gather}
    \label{eq:ineqAsym}
    \|f_I\|_{\ell^1}\leq C
    \min_{\substack{1\leq k,k'\leq n\\k\neq k'}}\{\|f_I(e_k)\|_{\ell^1}+\|f_I(e_{k'})\|_{\ell^1}\},&
    \hbox{ if $\boldsymbol{F}$ is of the first kind.}\cr
    \|f_I\|_{\ell^1}\leq C\min_{1\leq k\leq n}\|f_I(e_k)\|_{\ell^1},&
    \hbox{ if $\boldsymbol{F}$ is of the second kind,}\cr
  \end{gather}
  for some $C>0$, where $\|\omega\|_{\ell^1}=\sum_{1\leq j\leq n}|\omega_j|$.
\end{lemma}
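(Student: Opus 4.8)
The plan is to read both inequalities as statements about the column $\ell^1$-norms of $f_I$. Set $\mathbf 1=\sum_j\varepsilon^j$ and $\omega_I:=f_I^*(\mathbf 1)$; since all entries of the $f_i$ are non-negative (condition 1 in the definition of a Sierpinski gasket), the $k$-th coordinate of $\omega_I$ equals $\langle\mathbf 1,f_I(e_k)\rangle=\|f_I(e_k)\|_{\ell^1}$, i.e. the $k$-th column sum, and the $\ell^1$ operator norm is the largest of these, $\|f_I\|_{\ell^1}=\max_k\|f_I(e_k)\|_{\ell^1}=\max_k(\omega_I)_k$. Thus the two claims say exactly that along the orbit $\{\omega_I\}$ the largest coordinate is dominated by the smallest (second kind) respectively by the sum of the two smallest (first kind). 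First I would record the recursion obtained by appending an index on the right: from $f_{Ii}=f_I\circ f_i$ one gets $\omega_{Ii}=f_i^*(\omega_I)$, and with the normal form $f_i(e_i)=\alpha_i e_i$, $f_i(e_j)=\beta_{ij}e_i+\gamma_i e_j$ this reads
\[
(\omega_{Ii})_i=\alpha_i(\omega_I)_i,\qquad (\omega_{Ii})_k=\gamma_i(\omega_I)_k+\beta_{ik}(\omega_I)_i\quad(k\neq i),
\]
with every $(\omega_I)_k\ge 0$ and $\omega_0=\mathbf 1=(1,\dots,1)$. Because the only off-diagonal entries of $A_i$ lie in row $i$, a one-line row reduction gives $\det A_i=\alpha_i\gamma_i^{\,n-1}$, so volume preservation forces $\gamma_i=\alpha_i^{-1/(n-1)}$; hence $\alpha_i\ge 1\ge\gamma_i$, with $\alpha_i>1>\gamma_i$ precisely in the second kind and $\alpha_i=\gamma_i=1$ (unipotent generators) precisely in the first.

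For a gasket of the second kind I would exhibit a forward-invariant comparability cone. The mechanism is that the active coordinate $i$ is amplified by $\alpha_i>1$ while it feeds every other coordinate the positive amount $\beta_{ik}(\omega_I)_i$, the inherited part decaying by $\gamma_i<1$; along a run of a single generator the ratios $(\omega)_k/(\omega)_i$ contract towards the fixed values $\beta_{ik}/(\alpha_i-\gamma_i)$. I would then pick $C$ large compared with the finitely many constants $\alpha_i/\beta_{ik}$ and $\beta_{ik}/(\alpha_i-\gamma_i)$ and verify that $\mathcal C=\{c\ge 0:\max_k c_k\le C\min_k c_k\}$ satisfies $f_i^*(\mathcal C)\subseteq\mathcal C$ for every $i$. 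This reduces to a finite case analysis of $(\omega_{Ii})_k/(\omega_{Ii})_\ell$ in the three cases $k=i$, $\ell=i$, and $k,\ell\neq i$, each closing once $C$ dominates the constants above. Since $\mathbf 1\in\mathcal C$, invariance gives $\omega_I\in\mathcal C$ for all $I$, which is the second-kind inequality.

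For a gasket of the first kind the generators are unipotent, so the recursion becomes $(\omega_{Ii})_i=(\omega_I)_i$ and $(\omega_{Ii})_k=(\omega_I)_k+\beta_{ik}(\omega_I)_i\ge(\omega_I)_k$: \emph{every coordinate is non-decreasing along the orbit}, and the only coordinate not strictly fed at a given step is the active one. This is the engine of the argument. Heuristically at most one coordinate can lag: making a coordinate large requires applying a generator whose own coordinate is already large, but that same application boosts every other coordinate (in particular any would-be second small one) by a comparable amount, and by monotonicity it can never shrink back. To convert this into the bound $\max_k(\omega_I)_k\le C\{(\omega_I)_a+(\omega_I)_b\}$ for the two smallest $a,b$, I would decompose $I$ into maximal runs of a single letter. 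Within a run of letter $a$ the lagging coordinate $(\omega)_a$ is frozen while each other coordinate grows linearly with slope $\beta_{ak}(\omega)_a$, so the $n-1$ non-lagging coordinates stay comparable, their mutual ratios controlled by the spread $\beta_+/\beta_-$ of the boost coefficients; moreover any step whose letter is a currently non-small coordinate boosts all others and restores full comparability of all $n$ coordinates. Chaining these two facts across the run decomposition yields a lower bound on the second-smallest coordinate relative to the maximum that depends only on $\{\beta_{ik}\}$, hence a uniform $C$.

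The hard part will be exactly this uniformity in the parabolic first-kind case. Naive per-step estimates degrade—each application of the lagging generator appears to cost a multiplicative factor—yet the true ratio stays bounded below, because as a run continues the frozen coordinate becomes negligible against the maximum and the per-step loss tends to $1$; the honest argument must therefore be global (the run decomposition above) rather than a step-by-step induction, and its crux is to show that a single run creates at most one small coordinate and that a change of letter cannot create a second. The second-kind cone verification, by contrast, I expect to be routine once $C$ is fixed.
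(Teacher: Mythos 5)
Your reduction to column sums is correct and is in fact the right frame: with $\omega_I=f_I^*(\mathbf{1})$ one has $(\omega_I)_k=\|f_I(e_k)\|_{\ell^1}$, the recursion $\omega_{Ii}=f_i^*(\omega_I)$, i.e. $(\omega_{Ii})_i=\alpha_i(\omega_I)_i$ and $(\omega_{Ii})_k=\gamma_i(\omega_I)_k+\beta_{ik}(\omega_I)_i$ for $k\ne i$, and $\gamma_i=\alpha_i^{-1/(n-1)}$ (the mismatch between your $\ell^1$ operator norm and the paper's sum of all entries costs only a factor $n$ and is harmless). Your second-kind argument does close: invariance of $\{\,\omega\ge 0:\max_k\omega_k\le C\min_k\omega_k\,\}$ under each $f_i^*$ follows from the mediant inequality $\frac{a+b}{c+d}\le\max\{\frac{a}{c},\frac{b}{d}\}$ in your three cases, with $C\ge\alpha_i/\beta_{ik}$, $C\ge\beta_{ik}/(\alpha_i-\gamma_i)$ and $C\ge\beta_{ik}/\beta_{i\ell}$ respectively (the middle one is exactly where $\alpha_i>\gamma_i$ enters, and the third is dominated by products of the first two). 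That half is a self-contained alternative to the paper, which instead first proves that the dual semigroup $\bF^*$ is again a simple Sierpinski gasket over the frame $\cH=\{\eta^i\}$, $\eta^i=\sum_{j\ne i}\beta_{ij}\varepsilon^j+(\alpha_i-\gamma_i)\varepsilon^i$, so that the cone $C(\cH)$ is invariant, and then reads both inequalities off the expansion $\omega_I=\sum_j\lambda_j\eta^j$ with $\lambda_j\ge0$.

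The first-kind half, however, has a genuine gap, exactly at the point you flag as the crux. Your fact (b) restores the just-unfrozen coordinate only to level $\frac{\beta_-}{1+\beta_+}$ times the current level of the boosting coordinate, and your fact (a) only prevents \emph{further} loss inside a run; so at every change of letter the comparability constant you can certify is multiplied by $\frac{\beta_-}{1+\beta_+}<1$, and recovery inside a run to a fixed level needs run length of order the reciprocal of the incoming level, which is unbounded. Hence for words whose runs all have length one, such as $(12)^k$ or $(123)^k$, the chained estimate decays exponentially in $|I|$ (for $\boC_3$, where $\beta_\pm=1$, it gives $2^{-|I|}$), while the true ratio stabilizes at a positive constant (numerically $\approx 0.54$ for $(123)^k$, with Tribonacci-type growth). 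So "a change of letter cannot create a second small coordinate" is true, but it cannot be obtained by chaining per-run estimates; it needs a globally conserved object. The paper's proof supplies precisely that: for the first kind $\eta^j=\sum_{k\ne j}\beta_{jk}\varepsilon^k$, and in the coordinates $\omega=\sum_j\lambda_j\eta^j$ your recursion becomes $\lambda_i\mapsto\lambda_i+\sum_{j\ne i}\beta_{ji}\lambda_j$ with every other $\lambda_j$ fixed, so nonnegativity of the $\lambda_j$ persists along the whole orbit; then $(\omega_I)_k=\sum_{j\ne k}\beta_{jk}\lambda_j$ misses only $\lambda_k$, whence $(\omega_I)_k+(\omega_I)_{k'}\ge\beta_-\sum_j\lambda_j$ for any $k\ne k'$ while $\|\omega_I\|_{\ell^1}\le(n-1)\beta_+\sum_j\lambda_j$ --- a uniform constant with no chaining. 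Starting the orbit at $\sum_j\eta^j$, which is entrywise comparable to $\mathbf{1}$, converts this into your column-sum statement; so your proof is repaired by replacing the run decomposition with this single change of coordinates.
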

\begin{proof}
  Let $\cH=\{\eta^i=\sum\hat\beta^i_j\varepsilon^j\}$ be the proper frame 
  for $\boldsymbol{F}^*$ introduced in (\ref{eq:H}),
  where $\hat\beta^i_j=\beta_{ij}>0$, $j\neq i$, and 
  $\hat\beta^i_i=\alpha_i-\gamma_i\geq0$. 
  Clearly $\omega=\sum_{1\leq i\leq n}\eta^i\in C(\cH)$. By the previous
  proposition, $\omega_I=f^*_I(\omega)\in C(\cH)$ for all $I\in\cI^n$.
  This means that 
  $\omega_I=\sum_{1\leq i\leq n}(\omega_I)_i\varepsilon^i=\sum_{1\leq i\leq n}\lambda_i\eta^i$
  (with $\lambda_i\geq0$ and $\sum_{1\leq i\leq n}\lambda_i>0$) so that 
  $(\omega_I)_i=\sum_{1\leq j\leq n}\lambda_j\hat\beta^j_i$ and therefore
  $$
  \|\omega_I\|_{\ell^1}=\sum_{1\leq i\leq n}(\omega_I)_i
  =\sum_{1\leq i,j\leq n}\lambda_j\hat\beta^j_i\leq
  n\max_{1\leq i,j\leq n}\{\hat\beta^i_j\}\sum_{1\leq i\leq n}\lambda_i.
  $$
  
  Now note that 
  $(\omega_I)_k\geq(\min_{\hat\beta^i_k>0}\hat\beta^i_k)\sum_{\hat\beta^i_k>0}\lambda_i$
  is always a non-empty condition.

  If $\boldsymbol{F}$ is of the second kind then $\hat\beta^i_j>0$ for all $i,j$,
  so that 
  $$
  \|\omega_I\|\leq n\frac{\displaystyle\max_{1\leq i,j\leq n}\{\hat\beta^i_j\}}
            {\displaystyle\min_{1\leq i\leq n}\{\hat\beta^i_k\}}(\omega_I)_k
  $$
  for all $1\leq k\leq n$.

  If $\boldsymbol{F}$ is of the first kind then $\hat\beta^i_k=0$ iff $i=k$,
  namely every $\omega_k$ misses $\lambda_k$ in its expression, which we can
  recover by adding any other $\omega_{k'}$, $k'\neq k$. Hence in
  this case
  $$
  \|\omega_I\|\leq(n-1)\frac{\displaystyle\max_{1\leq i,j\leq n}\{\hat\beta^i_j\}}
  {\displaystyle\min_{\substack{1\leq i,j\leq n\\i\neq j}}\{\hat\beta^i_j\}}
  ((\omega_I)_k+(\omega_I)_{k'})
  $$
  for all $1\leq k,k'\leq n$, $k\neq k'$.

  Finally note that $f_I=\sum_{1\leq i,j\leq n}A^i_{Ij}e_i\otimes\varepsilon^j$,
  so that 
  $$
  \omega_I=f^*_I(\omega)=\sum_{1\leq i,j,k\leq n}\hat\beta^i_jA^j_{Ik}\varepsilon^k
  $$
  and therefore
  $$
  (\omega_I)_k\leq \max_{1\leq i,j\leq n}\{\hat\beta^i_j\}\sum_{1\leq j\leq n}A^j_{Ik}=
   \max_{1\leq i,j\leq n}\{\hat\beta^i_j\}\|f^*_I(e_k)\|_{\ell^1}
  $$
  and 
  $$
  (\min_{\hat\beta^i_j>0}\hat\beta^i_j)\|f_I\|_{\ell^1}=
  (\min_{\hat\beta^i_j>0}\hat\beta^i_j)\sum_{j,k}A^j_{Ik}\leq\|\omega_I\|_{\ell^1}
  $$
  from which follows the claim of this lemma.
\end{proof}
\begin{example}
  Consider the Sierpinski gaskets $F^{\alpha,n}$ introduced in Example~\ref{ex:sg}.
  A proper frame for $(F^{\alpha,n})^*$ is given by
  $\eta^i=(\alpha-1)\varepsilon^i+\sum_{j\neq i}\varepsilon^j$,
  so that $\omega=\sum_{1\leq i\leq n}\eta^i=(\alpha+n-2)\sum_{1\leq i\leq n}\varepsilon^i$
  and therefore 
  $$
  \omega_I=f^*_I(\omega)=(\alpha+n-2)\sum_{1\leq i\leq n}f^*_I(\varepsilon^i)=
  (\alpha+n-2)\sum_{1\leq i,k\leq n}A^i_{Ik}\varepsilon^k=
  $$
  $$
  =(\alpha+n-2)\sum_{1\leq k\leq n}\|f^*_I(e_k)\|_{\ell^1}\varepsilon^k.
  $$
  If $\alpha=1$ then
  $$
  \|\omega_I\|_{\ell^1}=\sum_{i,j}\lambda_j\hat\beta^j_i=(n-1)\sum_{j}\lambda_j
  $$
  and 
  $$
  (\omega_I)_i=\sum_{j\neq i}\lambda_j
  $$
  so that
  $$
  \|\omega_I\|_{\ell^1}\leq(n-1)\left((\omega_I)_k+(\omega_I)_{k'}\right)
  $$
  or, equivalently, 
  $$
  \|f_I\|_{\ell^1}\leq(n-1)\left(\|f_I(e_k)\|_{\ell^1}+\|f(e_{k'})\|_{\ell^1}\right)
  $$
  for every $k\neq k'$

  If $\alpha>1$ then 
  $$
  \|\omega_I\|_{\ell^1}=\sum_{i,j}\lambda_j\hat\beta^j_i=(n-1)\max\{\alpha-1,1\}
  \sum_{j}\lambda_j
  $$
  and 
  $$
  (\omega_I)_i\geq\min\{\alpha-1,1\}\sum_{i}\lambda_j
  $$
  so that 
  $$
  \|\omega_I\|_{\ell^1}\leq(n-1)\min\{\alpha-1,1\}(\omega_I),
  $$
  or, equivalently, 
  $$
  \|f_I\|_{\ell^1}\leq(n-1)\min\{\alpha-1,1\}\|f_I(e_k)\|_{\ell^1},
  $$
  for all $k$.
\end{example}
\begin{proposition}
  \label{thm:fastSG}
  Every simple Sierpinski gasket is a fast gasket.
\end{proposition}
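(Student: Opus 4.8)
The plan is to carry out the entire estimate in the entrywise norm $\|M\|_{\ell^1}=\sum_{i,j}|M^i_j|$ rather than in the max norm. Since all norms on $M_n(\bR)$ are equivalent, an inequality of the form $\|A_{IJ}\|_{\ell^1}\geq c\,\|A_I\|_{\ell^1}\|A_J\|_{\ell^1}$ transfers at once to the max norm after rescaling $c$ by the equivalence constants, so being fast is a norm-independent property and there is no loss in choosing the most convenient norm. The reason $\ell^1$ is convenient is that, by condition~1 in the definition of a real projective Sierpinski gasket, every $A_i$ has non-negative entries; hence no cancellation ever occurs and the $\ell^1$ norm behaves additively on the non-negative vectors $f_I(e_l)$.

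The first step is to record the resulting key identity. Writing $R_l(J)=\sum_k (A_J)^l_k$ for the $l$-th row sum of $A_J$, the relation $f_{IJ}(e_k)=f_I\bigl(f_J(e_k)\bigr)=\sum_l (A_J)^l_k\,f_I(e_l)$, combined with the non-negativity of each $f_I(e_l)$, gives $\|f_{IJ}(e_k)\|_{\ell^1}=\sum_l (A_J)^l_k\,\|f_I(e_l)\|_{\ell^1}$; summing over $k$ yields
\[
\|A_{IJ}\|_{\ell^1}=\sum_{l=1}^n \|f_I(e_l)\|_{\ell^1}\,R_l(J).
\]
This reduces the whole matter to comparing the weighted sum on the right with $\|A_I\|_{\ell^1}\|A_J\|_{\ell^1}=\bigl(\sum_l\|f_I(e_l)\|_{\ell^1}\bigr)\bigl(\sum_l R_l(J)\bigr)$, and this is exactly where Lemma~\ref{thm:induction} supplies the needed control of the $\|f_I(e_l)\|_{\ell^1}$.

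If $\bF$ is of the second kind the conclusion is immediate, with no hypothesis on $J$ at all: the lemma gives $\|f_I(e_l)\|_{\ell^1}\geq C^{-1}\|A_I\|_{\ell^1}$ for \emph{every} single index $l$, so the identity above forces $\|A_{IJ}\|_{\ell^1}\geq C^{-1}\|A_I\|_{\ell^1}\sum_l R_l(J)=C^{-1}\|A_I\|_{\ell^1}\|A_J\|_{\ell^1}$. The first-kind case is the genuine obstacle, because there the lemma only controls \emph{pairs}, $\|f_I(e_k)\|_{\ell^1}+\|f_I(e_{k'})\|_{\ell^1}\geq C^{-1}\|A_I\|_{\ell^1}$ for $k\neq k'$, so I must exhibit two \emph{distinct} rows $l_1\neq l_2$ of $A_J$ whose row sums are each a fixed fraction $\delta$ of $\|A_J\|_{\ell^1}$; once this is done, plugging $l_1,l_2$ into the identity gives $\|A_{IJ}\|_{\ell^1}\geq \delta\,\|A_J\|_{\ell^1}\bigl(\|f_I(e_{l_1})\|_{\ell^1}+\|f_I(e_{l_2})\|_{\ell^1}\bigr)\geq(\delta/C)\|A_I\|_{\ell^1}\|A_J\|_{\ell^1}$, i.e. fastness with $c=\delta/C$.

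Producing the two comparable rows is where the hypothesis $J\in\cJ^m\cdot\cI^m$ becomes indispensable, and it will be the heart of the proof. The structural fact to exploit is that a first-kind generator $A_i$ equals $\lambda$ times the identity plus positive entries $\alpha_{ij}$ confined to row $i$; consequently left multiplication by $A_i$ obeys the recursion $R_l(A_iX)=\lambda R_l(X)$ for $l\neq i$ and $R_i(A_iX)=\lambda R_i(X)+\sum_{j\neq i}\alpha_{ij}R_j(X)$, so it boosts the $i$-th row sum while merely rescaling all others. I would write $J=i_1 i_2\cdots$ with $i_1\neq i_2$, the inequality $i_1\neq i_2$ being guaranteed precisely because the leading factor of $J$ lies in $\cJ^m$. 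Setting $N=A_{i_2}\cdots$ (i.e. $A_J$ with its leading factor $A_{i_1}$ stripped off), the recursion applied to $A_{i_2}$ shows $R_{i_2}(N)\geq\delta_0\|N\|_{\ell^1}$ for a generator-dependent $\delta_0>0$ (using $\alpha_{\min}=\min_{i\neq j}\alpha_{ij}>0$ and $\lambda$); then, since $i_2\neq i_1$, the final left multiplication by $A_{i_1}$ preserves this via $R_{i_2}(J)=\lambda R_{i_2}(N)$ and simultaneously creates a comparably large $i_1$-th row through the $\alpha_{i_1 i_2}R_{i_2}(N)$ term. A short bookkeeping with $\alpha_{\min},\alpha_{\max},\lambda$ then yields $R_{i_1}(J),R_{i_2}(J)\geq\delta\|A_J\|_{\ell^1}$ for a uniform $\delta>0$, completing the argument. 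It is worth emphasizing that this is exactly why the definition of fast restricts $J$ to $\cJ^m\cdot\cI^m$ and not to all of $\cI^m$: for a pure power $J=i^k$ a first-kind map concentrates all mass in the single row $i$, and since the lemma permits an individual $\|f_I(e_l)\|_{\ell^1}$ to be small, such concentration would break the inequality — the same phenomenon already observed for $\boC_2$ in Example~\ref{ex:dynfast}.
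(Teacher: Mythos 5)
Your proof is correct, and it reaches the conclusion by a route that differs in its key mechanism from the paper's. The two arguments share the same pillars — everything is carried out in the entrywise $\ell^1$ norm, positivity of all entries is what makes the estimates exact, and Lemma~\ref{thm:induction} is the device that upgrades control of one column (second kind) or of a pair of columns (first kind) of $A_I$ to control of $\|A_I\|_{\ell^1}$. Where you diverge is in how the factor $A_J$ is handled. The paper never looks at row sums: it observes that for $i\neq j$ the product $A_iA_j$ has its $i$-th and $j$-th rows strictly positive (a consequence of the gasket axioms $f_i(e_j)=\alpha e_i+\beta e_j$, valid for \emph{every} Sierpinski gasket and uniform in the two kinds), so that the product $A_I A_{ij}A_L$ retains in the two coordinates $i,j$ a fixed positive multiple of the full mass of one factor times two columns of the other; Lemma~\ref{thm:induction} and the cheap bound $\|f_L\|_{\ell^1}\geq C''\|A_{ijL}\|_{\ell^1}$ then finish. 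You instead prove the exact identity $\|A_{IJ}\|_{\ell^1}=\sum_l\|f_I(e_l)\|_{\ell^1}R_l(J)$ and control two row sums of $A_J$ by propagating them through the arrowhead form $\lambda\one_n+E_i$ of first-kind generators, which is where simplicity enters your argument a second time. Each route buys something: yours cleanly separates the two kinds, shows that for second-kind gaskets the fast inequality needs \emph{no} restriction on $J$ whatsoever, and isolates exactly why $J\in\cJ^m\cdot\cI^m$ is indispensable in the first-kind (parabolic-type) case; the paper's two-full-rows observation is shorter and treats both kinds with a single computation, at the cost of always passing through the leading pair $i\neq j$ even when it is not needed. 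A final cosmetic remark in your favor: the paper's displayed intermediate inequality attaches the pair of columns to $f_L$ and the full norm to $f_I$, whereas the positivity mechanism it invokes actually produces the transposed pairing (pair on $f_I$, full norm on $f_L$), which is precisely what your identity yields; your bookkeeping is the more careful of the two.
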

\begin{proof}
  The key fact here is that in every Sierpinski gasket $\boldsymbol{F}$
  with a proper frame $\cE$, 
  for any $i\neq j$ and every $k$, $f_{ij}(e_k)$ is linearly dependent 
  on both $e_i$ and $e_j$. Indeed if $k\neq j$ then 
  $$
  f_{ij}(e_k)=f_i(\beta_{jk}e_j+\alpha_{jk}e_k)
  =\alpha_{jk}f_i(e_k)+\beta_{jk}\left(\beta_{ij}e_i+\alpha_{ij}e_j\right),
  $$
  while if $k=j$ then 
  $$
  f_{ij}(e_j)=\alpha_{jj}(\beta_{ij}e_i+\alpha_{ij}e_j).
  $$ 
  Hence the matrix representing $f_{ij}$ with
  respect to $\cE$ has at least (actually, exactly) two rows with all 
  non-zero coefficients. This means that every column of the matrix
  representing $f_{ijL}$ is a linear combination with strictly positive 
  coefficients of $f_L(e_i)$ and $f_L(e_j)$ with possibly some other 
  positive contribution from the other vectors.

  From this we deduce immediately that 
  $$
  \|f_{IijL}\|_{\ell^1}\geq C\|f_{I}\|_{\ell^1}
  \left(\|f_L(e_i)\|_{\ell^1}+\|f_L(e_j)\|_{\ell^1}\right)
  $$
  for some $C\geq0$ and therefore, by Lemma~\ref{thm:induction}, that
  $$
  \|f_{IijL}\|_{\ell^1}\geq C'\|f_{I}\|_{\ell^1}\|f_{L}\|_{\ell^1}
  $$
  for some $C'\geq0$.
  Since $\|f_{L}\|_{\ell^1}\geq C''\|f_{ijL}\|_{\ell^1}$ for all $i\neq j$
  and some $C''>0$, our claim follows.
\end{proof}
\begin{proposition}
  Let $\boldsymbol{F}\subset Aut(\bR^n)$ be a simple Sierpinski gasket,
  $\cE$ a proper frame of $\boldsymbol{F}$ and $\mu$ any measure of $\RPnm$ 
  in the measure class of the {\em round measure}, namely the measure 
  induced by the metric of sectional curvature identically equal to 1.
  Then there exist constants $A,B,C,D>0$ s.t.
  $$
  \frac{A}{\|f_I\|^{n}}\leq\mu(T_{\bF f_I})\leq\frac{B}{\|f_I\|^{a_n}},\;\;\;
  \frac{C}{\|f_I\|^{n}}\leq\mu(Z_{\bF f_I})\leq\frac{D}{\|f_I\|^{n}},
  $$
  where $a_n=n$ if $\boldsymbol{F}$ is of the second kind and $a_n=n-1$
  if it is of the first kind.
\end{proposition}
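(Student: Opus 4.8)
The plan is to reduce both estimates to a single integral over the fixed simplex $T_\bF$ and to read off the two exponents from the behaviour of the integrand near the vertices $[e_k]$. First I would observe that on the compact set $T_\bF$ any measure in the round measure class is comparable to the round measure $\sigma$, so it suffices to prove the estimates for $\sigma$, absorbing the comparison into $A,B,C,D$. Writing $v=\sum_k t_ke_k$ for a unit representative ($t_k\ge 0$, $\sum_k t_k\asymp 1$ on $T_\bF$), the classical Jacobian of a projective map with respect to $\sigma$ is $|\det A_I|/|A_Iv|^{n}$, and since the $f_i$ are volume preserving this is $|A_Iv|^{-n}$. Hence
\[
\mu(T_{\bF f_I})=\mu(\psi_I(T_\bF))\asymp\int_{T_\bF}\frac{d\sigma}{|A_Iv|^{n}},\qquad
\mu(Z_{\bF f_I})\asymp\int_{Z_\bF}\frac{d\sigma}{|A_Iv|^{n}}.
\]

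The second key point is that, because all $A^k_{Ij}\ge 0$, there is no cancellation and in the $\ell^1$ norm one has the exact identity $\|A_Iv\|_{\ell^1}=\sum_{k}t_k\,\|f_I(e_k)\|_{\ell^1}$. Thus $|A_Iv|$ is governed entirely by the column norms $\|f_I(e_k)\|_{\ell^1}$, which Lemma~\ref{thm:induction} controls: together with the trivial bound $\|f_I(e_k)\|\le\|f_I\|$ it gives, in the second-kind case, $\|f_I(e_k)\|\asymp\|f_I\|$ for every $k$, and in the first-kind case the same for at least $n-1$ indices, with at most one ``degenerate'' column $e_{k_0}$. Consequently in the second-kind case $|A_Iv|\asymp\|f_I\|$ uniformly on $T_\bF$ and the $T$-integral is $\asymp\|f_I\|^{-n}$, giving $a_n=n$.

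For the first-kind case I would split the $T$-integral into a bulk part, away from $[e_{k_0}]$, where $|A_Iv|\asymp\|f_I\|$ and which contributes $\asymp\|f_I\|^{-n}$ (already yielding the lower bound $A\|f_I\|^{-n}$), and a part near the degenerate vertex. Setting $N=\|f_I\|$, $m=\|f_I(e_{k_0})\|$ and $\rho=1-t_{k_0}=\sum_{k\ne k_0}t_k$, the identity above gives $|A_Iv|\asymp m+N\rho$ near $[e_{k_0}]$, while the simplex shell at radius $\rho$ has $(n-2)$-measure $\asymp\rho^{n-2}$; the substitution $\rho=(m/N)u$ then yields
\[
\int_{\rho\ \text{small}}\frac{d\sigma}{(m+N\rho)^n}
\asymp\frac{1}{N^{n-1}m}\int_0^{\infty}\frac{u^{n-2}}{(1+u)^{n}}\,du
\asymp\frac{1}{N^{n-1}m},
\]
the last integral converging. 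Since always $m\le N$ this is $\ge\|f_I\|^{-n}$, consistent with the lower bound; the upper bound $\mu(T_{\bF f_I})\le B\|f_I\|^{-(n-1)}$ requires $m\ge c>0$. This is where the dichotomy truly enters: a first-kind generator has a single eigenvalue, which volume preservation forces to be $1$, so every generator is unipotent with unit diagonal and non-negative entries; the all-diagonal path then shows $(A_I)_{k_0k_0}\ge 1$, whence $m=\|f_I(e_{k_0})\|_{\ell^1}\ge 1$. This gives $a_n=n-1$.

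Finally, the $Z_{\bF f_I}$ estimates are easier and have exponent $n$ on both sides: $Z_\bF$ is a fixed compact set disjoint from every vertex $[e_k]$ (a neighbourhood of $[e_k]$ lies in $T_{\bF f_k}$), so $\max_k t_k\le 1-c_0$ on $Z_\bF$. Hence for whichever index $k_0$ is degenerate one has $\sum_{k\ne k_0}t_k\ge c_0$, so $|A_Iv|\asymp\|f_I\|$ uniformly on $Z_\bF$ and $\mu(Z_{\bF f_I})\asymp\|f_I\|^{-n}$, giving both $C$ and $D$. I expect the main obstacle to be the first-kind upper bound: pinning the correct power $n-1$ rests on the uniform lower bound $m\ge c$ for the smallest column norm (via the unipotent, unit-diagonal structure) together with making the near-vertex shell asymptotics rigorous, in particular the convergence of $\int_0^\infty u^{n-2}(1+u)^{-n}\,du$ and the uniformity of all constants in $I$ and in the choice of $k_0$.
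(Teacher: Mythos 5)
Your proof is correct, but it takes a genuinely different route from the paper's. The paper never integrates: working in an affine chart it computes the measure of $T_{\bF f_I}$ \emph{exactly} as a simplex volume, $\mu(T_{\bF f_I})=\bigl(n!\,\prod_{k=1}^n\|f_I(e_k)\|_{\ell^1}\bigr)^{-1}$ (using $\det A_I=\pm1$), handles $Z_{\bF f_I}$ by cutting it into $n$ explicit $(n-1)$-simplices and computing each of their volumes the same way, and then obtains the two-sided bounds by purely algebraic manipulation of the column norms via Lemma~\ref{thm:induction}. You instead push the round measure through the Jacobian formula $|\det A_I|/|A_Iv|^n$ and estimate the resulting integral, using the same Lemma~\ref{thm:induction} to classify the columns and a shell decomposition near the degenerate vertex. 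What your approach buys: no exact volume formula is needed, $T$ and $Z$ are treated uniformly (the $Z$ estimate becomes a one-line consequence of $Z_\bF$ lying compactly away from the vertices, with no simplicial decomposition), and you get the sharper asymptotic $\mu(T_{\bF f_I})\asymp\|f_I\|^{-(n-1)}\|f_I(e_{k_0})\|^{-1}$, of which the stated bounds are corollaries. What the paper's approach buys: exactness and brevity --- no near-vertex asymptotics, no convergence checks, explicit constants. One point in your favour is worth stressing: the uniform lower bound $\|f_I(e_{k_0})\|_{\ell^1}\geq1$, which you derive from the unipotent (unit-diagonal, non-negative) structure of first-kind generators, is genuinely needed; the paper's first-kind chain of inequalities silently divides by the smallest column norm (its first displayed inequality compares a product over all $n$ columns with one over only $n-1$ of them, which is only legitimate if that smallest norm is bounded below), so your explicit argument supplies a step that the paper's write-up glosses over.
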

\begin{proof}
  It is enough to prove the claim in some chart containing $T_\bF$.
  We fix coordinates $(x^1,\dots,x^n)$ so that the vectors of $\cE$
  are
  $$
  e_1=(1,0,\dots,0,1),\dots,e_{n-1}=(0,\dots,0,1,1),e_{n}=(0,\dots,0,1)
  $$
  and use the chart $x^n=1$. In this chart we pick any smooth measure $\nu$ 
  of finite total volume and with constant density equal to 1 within $T_\bF$.
  
  Note that 
  $f_I=A^i_{Ik}e_i\otimes\varepsilon^k=A^i_{Ik}e^j_i\partial_j\otimes\varepsilon^k$,
  where the last row of the matrix $A^i_{Ik}e^j_i$ contain the $\ell^1$ norms
  of the vectors $f_I(e_i)$.
  A direct calculation shows that 
  $$
  \mu(T_{\bF f_I})
  =
  \frac{1}{n!\displaystyle\prod_{k=1}^{n}A^i_{Ik}e_i^{n}}
  =
  \frac{1}{n!\displaystyle\prod_{k=1}^{n}\|f_I(e_k)\|_{\ell^1}}.
  $$
  Clearly 
  $\displaystyle
  \frac{\displaystyle\prod_{k=1}^{n}\|f_I(e_k)\|_{\ell^1}}{\|f_I\|^n_{\ell^1}}
  \leq
  1.
  $
  By Proposition~\ref{thm:induction}, if $\boldsymbol{F}$ is of the second kind then
  $$
  A
  \leq
  \frac{\displaystyle\prod_{k=1}^{n}\|f_I(e_k)\|_{\ell^1}}{\|f_I\|^n_{\ell^1}}
  $$
  for some $A>0$.
  If it is of the first kind assume, for the argument sake, that
  $\|f_I(e_1)\|_{\ell^1}\leq\dots\|f_I(e_n)\|$. Then
  $$
  \frac{\|f_I\|_{\ell^1}^{n-1}}{\displaystyle\prod_{k=1}^{n}\|f_I(e_k)\|_{\ell^1}}
  =
  \frac{(\displaystyle\sum_{k=1}^{n}\|f_I(e_k)\|_{\ell^1})^{n-1}}{\displaystyle\prod_{k=1}^{n}\|f_I(e_k)\|_{\ell^1}}
  \leq
  $$
  $$
  \leq
  C \prod_{k=1,n-1}\frac{\|f_I(e_k)\|_{\ell^1}+\|f_I(e_{k+1})\|_{\ell^1}}{\|f_I(e_{k+1})\|_{\ell^1}}\leq 2^{n-1}C
  $$
  The geometry of $Z_{\bF f_I}$ is more complex. We divide it in $n$ 
  $(n-1)$-simplices $Z_i$, where $Z_i$'s vertices are the $(n-1)$
  points $[f_i(f_I(e_j))]$, $j\neq i$, plus the point $[\sum_{1\leq i\leq n}f_I(e_i)]$.
  Then
  $$
  \mu(Z_i)=\frac{1}{\displaystyle n!\|f_I(\sum_{1\leq i\leq n}e_i)\|_{\ell^1}
    \prod_{j\neq i}\|\beta_{ij}(e_I)_i+\gamma_i(e_I)_j\|_{\ell^1}}
  = \frac{1}{\displaystyle n!\|f_I\|_{\ell^1}
    \prod_{j\neq i}\|\beta_{ij}(e_I)_i+\gamma_i(e_I)_j\|_{\ell^1}}
  $$
  since all components of all vectors are positive.
  Even in this case then 
  $$
  \frac{1}{\mu(Z_i)\|f_I\|^n_{\ell^1}}
  \leq
  n!
  $$
  and
  $$
  \frac{\|f_I\|_{\ell^1}^{n}}{\|f_I\|_{\ell^1}
    \prod_{j\neq i}\|\beta_{ij}(e_I)_i+\gamma_i(e_I)_j\|_{\ell^1}}
  \leq
  \frac{\|f_I\|_{\ell^1}^{n-1}}{\max\{\beta,\gamma\}\prod_{j\neq i}\|f_I(e_i)+f(e_j)\|_{\ell^1}}
  \leq A
  $$
  for some $A$.
\end{proof}
Next proposition supports the idea that the residual set of a Sierpinski
gasket, or at least of a simple one, have non-integer dimension:
\begin{proposition}
  The residual set of a simple Sierpinski gasket 
  $\bF\subset SL_n^\pm(\bR)$ is a null set with respect to the measure class of 
  the round measure on $\bR P^{n-1}$.
\end{proposition}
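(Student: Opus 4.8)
The plan is to reduce the claim to showing that the decreasing sequence $\mu(T_{k,\bF})$ tends to $0$, where $T_{k,\bF}=\bigcup_{|I|=k}T_{\bF f_I}$. Since the sets $T_{k,\bF}$ decrease to $R_\bF$ and $\mu$ is a finite measure, $\mu(R_\bF)=\lim_{k\to\infty}\mu(T_{k,\bF})$. The open set condition forces the sub-simplices $T_{\bF f_{Ii}}$ ($1\le i\le n$) and the removed polyhedron $Z_{\bF f_I}$ to have pairwise disjoint interiors inside $T_{\bF f_I}$, and their boundaries are null, so that $\mu(T_{\bF f_I})=\sum_{i=1}^{n}\mu(T_{\bF f_{Ii}})+\mu(Z_{\bF f_I})$. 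Summing over $|I|=k$ gives the telescoping relation
\[
\mu(T_{k+1,\bF})=\mu(T_{k,\bF})-\sum_{|I|=k}\mu(Z_{\bF f_I})=\sum_{|I|=k}\bigl(1-\rho_I\bigr)\,\mu(T_{\bF f_I}),
\]
where $\rho_I=\mu(Z_{\bF f_I})/\mu(T_{\bF f_I})$ is the proportion of $T_{\bF f_I}$ destroyed in one step. Everything therefore reduces to a lower bound on $\rho_I$.

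For a gasket of the second kind this is immediate from the measure estimates of the previous proposition. There $a_n=n$, so $\mu(T_{\bF f_I})\le B\|f_I\|^{-n}$ while $\mu(Z_{\bF f_I})\ge C\|f_I\|^{-n}$, whence $\rho_I\ge C/B=:\delta>0$ uniformly in $I$. The telescoping relation then gives $\mu(T_{k+1,\bF})\le(1-\delta)\mu(T_{k,\bF})$, so $\mu(T_{k,\bF})\le(1-\delta)^k\mu(T_\bF)\to0$ and $\mu(R_\bF)=0$.

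The real difficulty is the first kind, where $a_n=n-1$ and the same estimates only yield $\rho_I\ge (C/B)\|f_I\|^{-1}$, a bound that degenerates as $\|f_I\|\to\infty$. This is not an artefact: along the diagonal indices the norm $\|f_i^k\|$ grows only polynomially, the corresponding simplices $T_{\bF f_i^k}$ become arbitrarily elongated, and evaluating $\mu(Z_{\bF f_I})/\mu(T_{\bF f_I})$ from the exact volume formula confirms that $\rho_I\to0$ along such sequences; moreover, near the (parabolic) fixed points of the first-kind generators the maps $\psi_i$ are only non-expanding, so neither a uniform lower bound on $\rho_I$ nor a global bounded-distortion estimate for the Jacobians $|\det D\psi_I|$ on $T_\bF$ is available. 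The plan here is to argue by contradiction through renormalisation: assuming $\mu(R_\bF)>0$, set $\theta_I=\mu(R_\bF\cap T_{\bF f_I})/\mu(T_{\bF f_I})$ and exploit the self-similarity $R_\bF\cap T_{\bF f_I}=\psi_I(R_\bF)$, which writes $\theta_I$ as the ratio of $\int_{R_\bF}|\det D\psi_I|\,d\mu$ to $\int_{T_\bF}|\det D\psi_I|\,d\mu$. Picking a Lebesgue density point $x\in R_\bF$ with address $(I_k)$ and renormalising by $\psi_{I_k}^{-1}$, which carries $Z_{\bF f_{I_k}}$ back onto the fixed hole $Z_\bF$, one would then try to exhibit, for infinitely many $k$, a definite deficit of $R_\bF$ at a controlled scale, contradicting density $1$.

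The crux — and the step I expect to be the main obstacle — is to show that the elongated, near-parabolic shapes are transient for $\mu$-almost every address: concretely, that every simplex $T_{\bF f_I}$ admits a uniformly \emph{fat} descendant $T_{\bF f_{IJ}}$ with $|J|$ bounded and $\rho_{IJ}\ge\delta_0>0$, and that such fat descendants recur with positive frequency along almost every branch. This is a recurrence statement for the action induced by $\bF$ on the compact space of simplex shapes, governed by the positivity and simplicity of the Lyapunov spectrum of the associated cocycle together with controlled distortion of the parabolic maps near their fixed points. In the conformal instance, the Apollonian gasket of Motivational Example~2, this control is exactly what lets the self-conformal techniques of Mauldin and Urbanski~\cite{MU98} go through; the non-conformal first-kind case treated here, of which the cubic gasket $\boC_3$ is the extremal member, is precisely where those techniques break down and where the argument must be supplied by hand.
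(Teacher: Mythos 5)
Your reduction of the claim to a lower bound on the removed proportion $\rho_I=\mu(Z_{\bF f_I})/\mu(T_{\bF f_I})$, and your treatment of gaskets of the second kind (a uniform bound $\rho_I\ge\delta>0$ from the measure estimates, hence geometric decay of $\mu(T_{k,\bF})$), coincide with the paper's own proof. The genuine gap is the first kind: there you declare the telescoping scheme unusable and replace it by a renormalisation programme (density points, recurrence of ``fat'' simplices, positivity of the Lyapunov spectrum) whose crucial step you yourself flag as the main obstacle and leave unproven. As written, that half of the proposition is simply not established.

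The idea you are missing is that the telescoping argument does not need a lower bound on $\rho_I$ that is uniform in $k$; it only needs one whose decay in $k$ is non-summable. For a first-kind gasket the generators are parabolic and $\min_{|J|=k}\|f_J\|$ grows only \emph{linearly} in $k$; the paper's point is that the ratio $\mu(T_{\bF f_I})/\mu(Z_{\bF f_I})$ is controlled by this minimal norm rather than by $\|f_I\|$ itself (this comes from the exact volume formula $\mu(T_{\bF f_I})=\bigl(n!\prod_{k}\|f_I(e_k)\|_{\ell^1}\bigr)^{-1}$, the analogous formula for the hole, and the first-kind inequality of Lemma~\ref{thm:induction}, which show that the removed proportion is comparable to the ratio of the two smallest column norms of $f_I$), so that $\rho_I\ge C/k$ for \emph{every} $|I|=k$. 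Setting $S_k=\sum_{|I|=k}\mu(T_{\bF f_I})$ and $P_k=\sum_{|I|=k}\mu(Z_{\bF f_I})$, this gives $S_{k+1}=S_k-P_k\le S_k\left(1-C/k\right)$, and summing the increments yields
$$
S_1-S_{k+1}=\sum_{j=1}^{k}P_j\;\ge\;C\Bigl(\inf_{m}S_m\Bigr)\sum_{j=1}^{k}\frac{1}{j}\,;
$$
if $\inf_m S_m>0$ the right-hand side diverges as $k\to\infty$ while the left-hand side is bounded by $S_1$, a contradiction, so $\lim_k S_k=0$ and $\mu(R_\bF)=0$. In particular, your own observation that $\rho_I\asymp 1/k$ along diagonal words is not an obstruction but precisely the borderline rate at which $\prod_k(1-C/k)$ still tends to zero; no bounded-distortion estimate, recurrence statement, or Mauldin--Urbanski-type machinery is required.
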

\begin{proof}
  From the previous proposition we see that
  $$
  1\leq\frac{\mu(T_{\bF f_I})}{\mu(Z_{\bF f_I})}\leq\frac{C}{\|f_I\|}
  $$
  if $\boldsymbol{F}$ is of the first kind and
  $$
  1\leq\frac{\mu(T_{\bF f_I})}{\mu(Z_{\bF f_I})}\leq C
  $$
  if it is of the second.
  Let us first assume that $\bF$ is of the second kind and 
  let 
  $$S_k=\sum_{|I|=k+1}\mu(T_{\bF f_I})\hbox{ and }P_k=\sum_{|I|=k}\mu(Z_{\bF f_I}).$$
  Then
  $$
  S_{k+1}=S_{k}-P_{k}\leq S_{k}(1-C)
  $$
  and therefore
  $$
  S_k-S_{k+1}\geq C S_k
  $$
  so that, after making a telescopic sum, we get
  $$
  S_1-\lim_{k\to\infty}S_k\geq C\lim_{k\to\infty}k S_k
  $$
  which immediately implies that $\lim_{k\to\infty}S_k=0$.

  If $\boldsymbol{F}$ is of the first kind then 
  $\mu(T_{\bF f_I})/\mu(Z_{\bF f_I})\leq C/\min_{|J|=|I|}\|f_J\|$.
  It is easy to check that $\min_{|J|=|I|}\|f_J\|$ is proportional to $|I|$
  and therefore
  $$
  S_{k+1}=S_{k}-P_{k}\leq S_{k}(1-C/k)
  $$
  and
  $$
  S_k-S_{k+1}\geq C S_k/k
  $$
  so that
  $$
  S_1-\lim_{k\to\infty}S_k\geq C \lim_{k\to\infty}S_k\sum_{1\leq j\leq k}1/j.
  $$
  Since the series $1/j$ diverges we get again that $\lim_{k\to\infty}S_k=0$.
\end{proof}
%
%
\subsubsection{Affine Sierpinski Gaskets}
\label{sec:aSG}
In~\cite{FL98} Falconer and Lammering studied in detail the family 
of affine Sierpinski gaskets $S_{a,b}$, $a,b\in(0,1)$,  
defined by the affine transformations
\begin{align*}
  S_1\begin{pmatrix}x\cr y\cr\end{pmatrix}&=
  (1-a)\begin{pmatrix}x\cr y\cr\end{pmatrix}
  +\begin{pmatrix}0\cr a\cr\end{pmatrix}\cr
  S_2\begin{pmatrix}x\cr y\cr\end{pmatrix}&=
  \begin{pmatrix}b&0\cr 0&a\cr\end{pmatrix}
\begin{pmatrix}x\cr y\cr\end{pmatrix}\cr
  S_3\begin{pmatrix}x\cr y\cr\end{pmatrix}&=
\begin{pmatrix}1-b&1-a-b\cr 0&a\cr\end{pmatrix}
  +\begin{pmatrix}b\cr 0\cr\end{pmatrix}.\cr
\end{align*}
Notice that the case $a=\frac{1}{2},b=\frac{1}{2}$ 
corresponds to the standard Sierpinski gasket.
In particular they proved that the box dimension of the corresponding
residual set $R_{a,b}$ is given by the unique root of the equation
\begin{equation}
  \label{eq:FL1}
  (1-a)^s+ab^{s-1}+a(1-b)^{s-1}=1
\end{equation}
in the triangle $T_1=\{(a,b)\in(0,1)^2,a\geq\max\{b,1-b\}\}$
and of 
\begin{equation}
  \label{eq:FL2}
  (1-a)^s+a^{s-1}=1
\end{equation}
in the opposite triangle $T_2=\{(a,b)\in(0,1)^2,a\leq\min\{b,1-b\}\}$.
This setting provides a convenient source of examples for comparing 
the box dimension of the 
residual set of a real projective Sierpinski gasket with the corresponding
gasket exponent. Indeed the injection sending the affine transformation
$S(x)=Tx+v$, with $T\in M_2(\bR)$ and $x,v\in\Rt$, into the $3\times3$ 
matrices $S\to\begin{pmatrix}T&v\cr 0&1\end{pmatrix}$ applied to the $S_i$
gives the three matrices
$$
M_1=\begin{pmatrix}1-a&0&0\cr 0&1-a&a\cr 0&0&1\cr\end{pmatrix},
M_2=\begin{pmatrix}b&0&0\cr 0&a&0\cr 0&0&1\cr\end{pmatrix},
M_3=\begin{pmatrix}1-b&1-a-b&b\cr 0&a&0\cr 0&0&1\cr\end{pmatrix}
$$
so that the action induced by the $A_i$ on $\bR P^2$ in the 
affine chart $z=1$ coincides with the action of the $S_i$. 
\begin{figure}
  \centering
  \begin{tabular}{cc}
    \includegraphics[width=6.5cm,clip=true,trim=80 250 20 100]{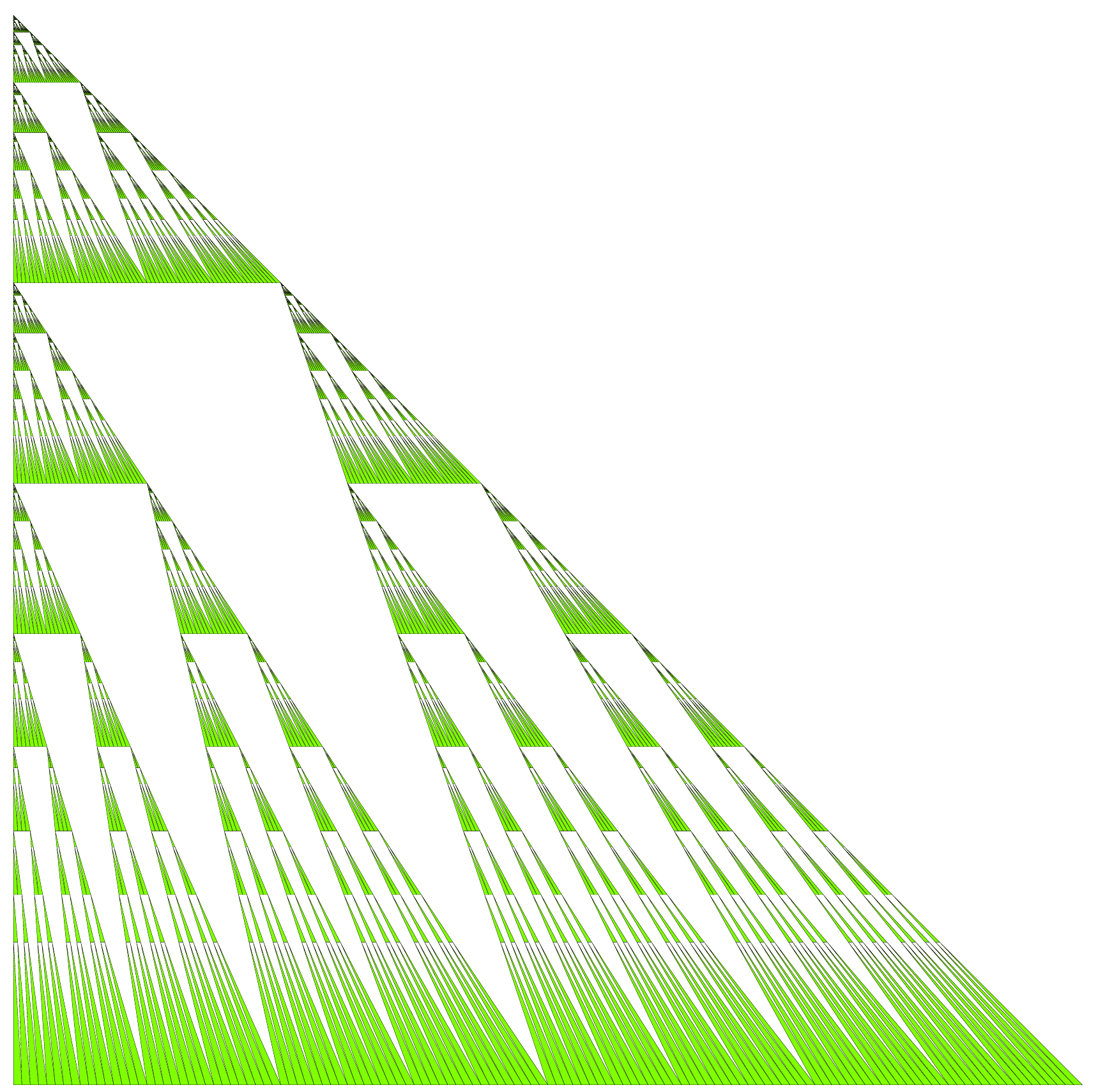}&
    \includegraphics[width=6.5cm,clip=true,trim=80 200 20 100]{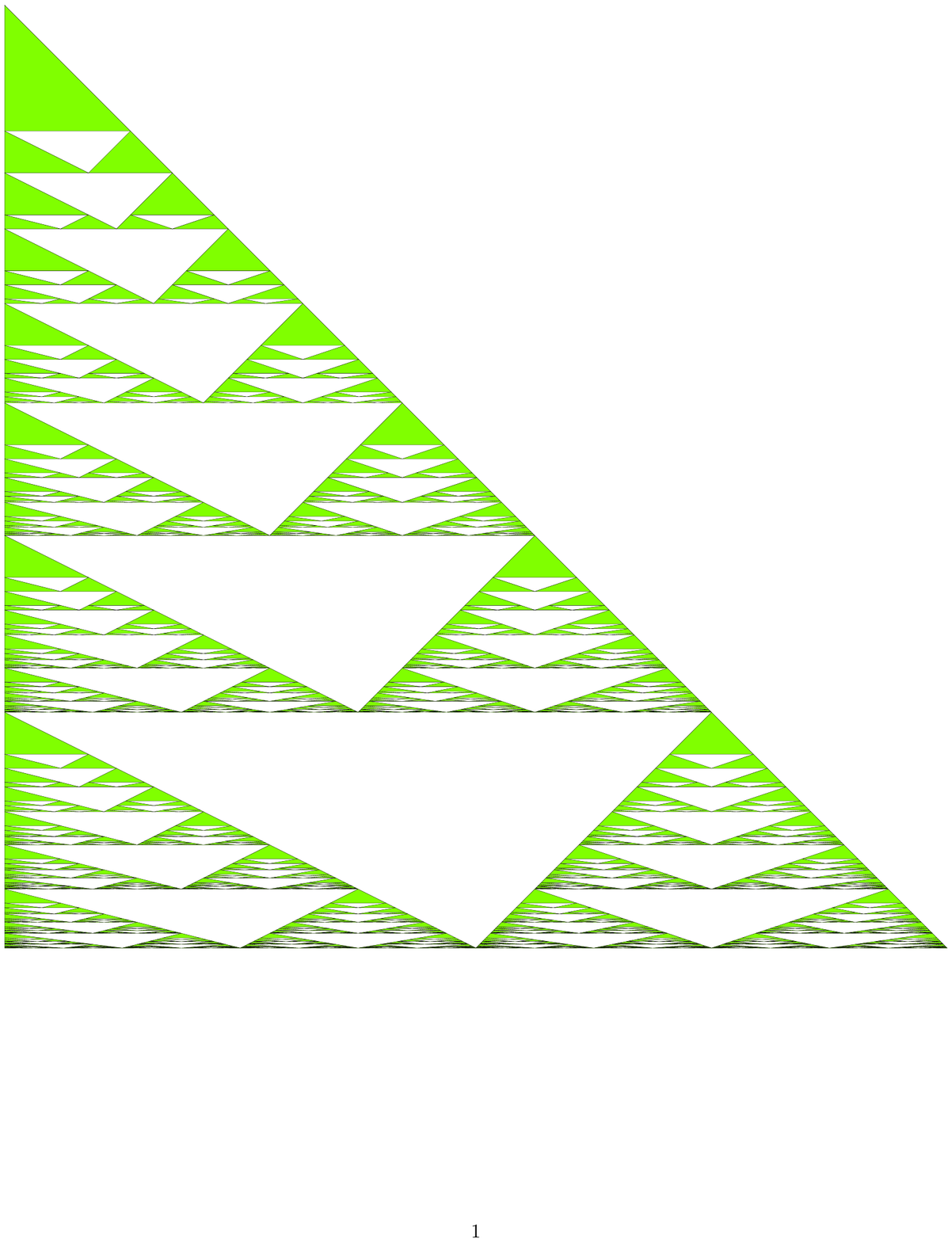}\cr
    $a=\frac{3}{4},b=\frac{1}{2}$&$a=\frac{1}{4},b=\frac{1}{2}$\cr
    \includegraphics[width=6.5cm,clip=true,trim=80 250 20 100]{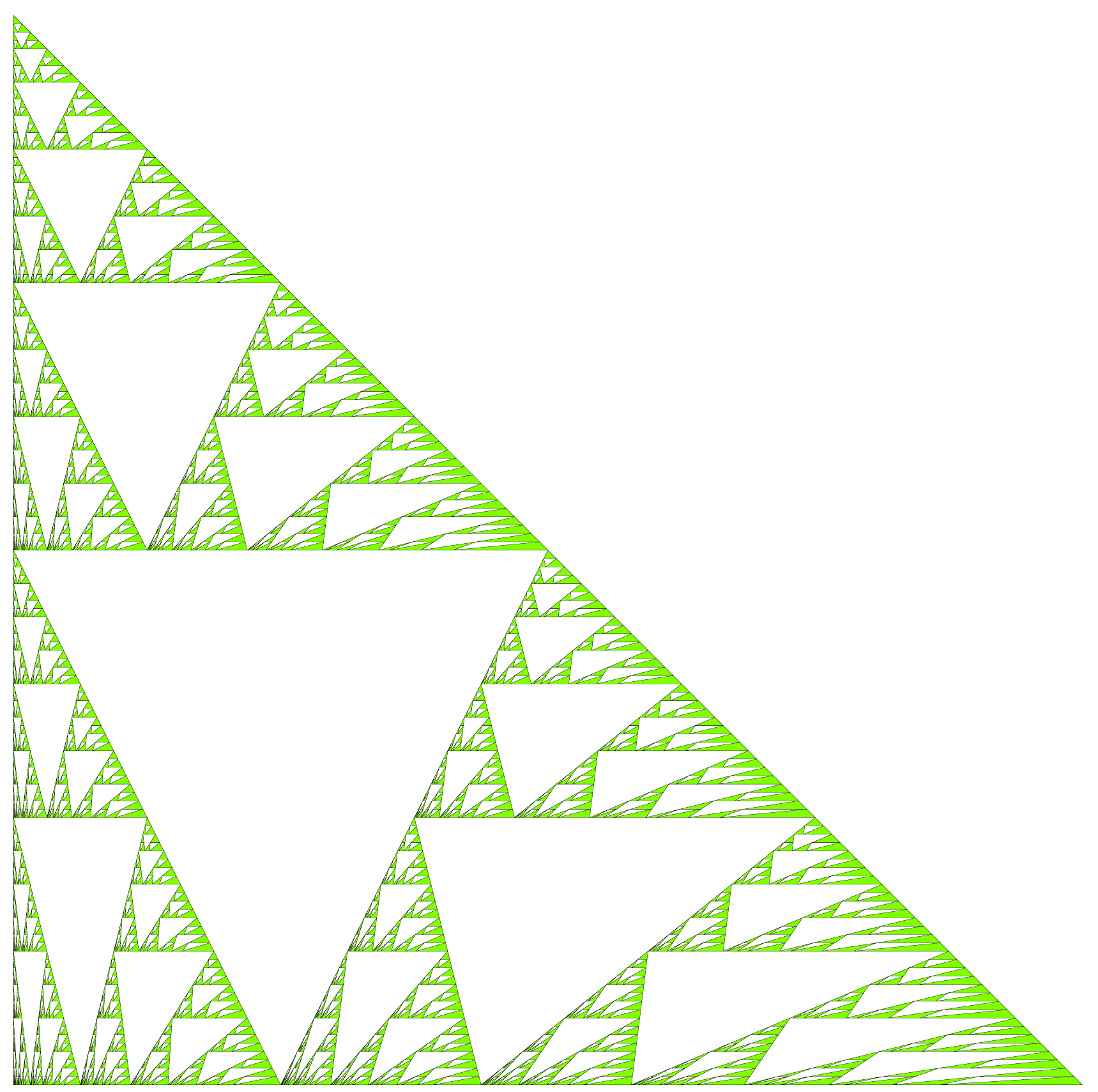}&
    \includegraphics[width=6.5cm,clip=true,trim=80 250 20 100]{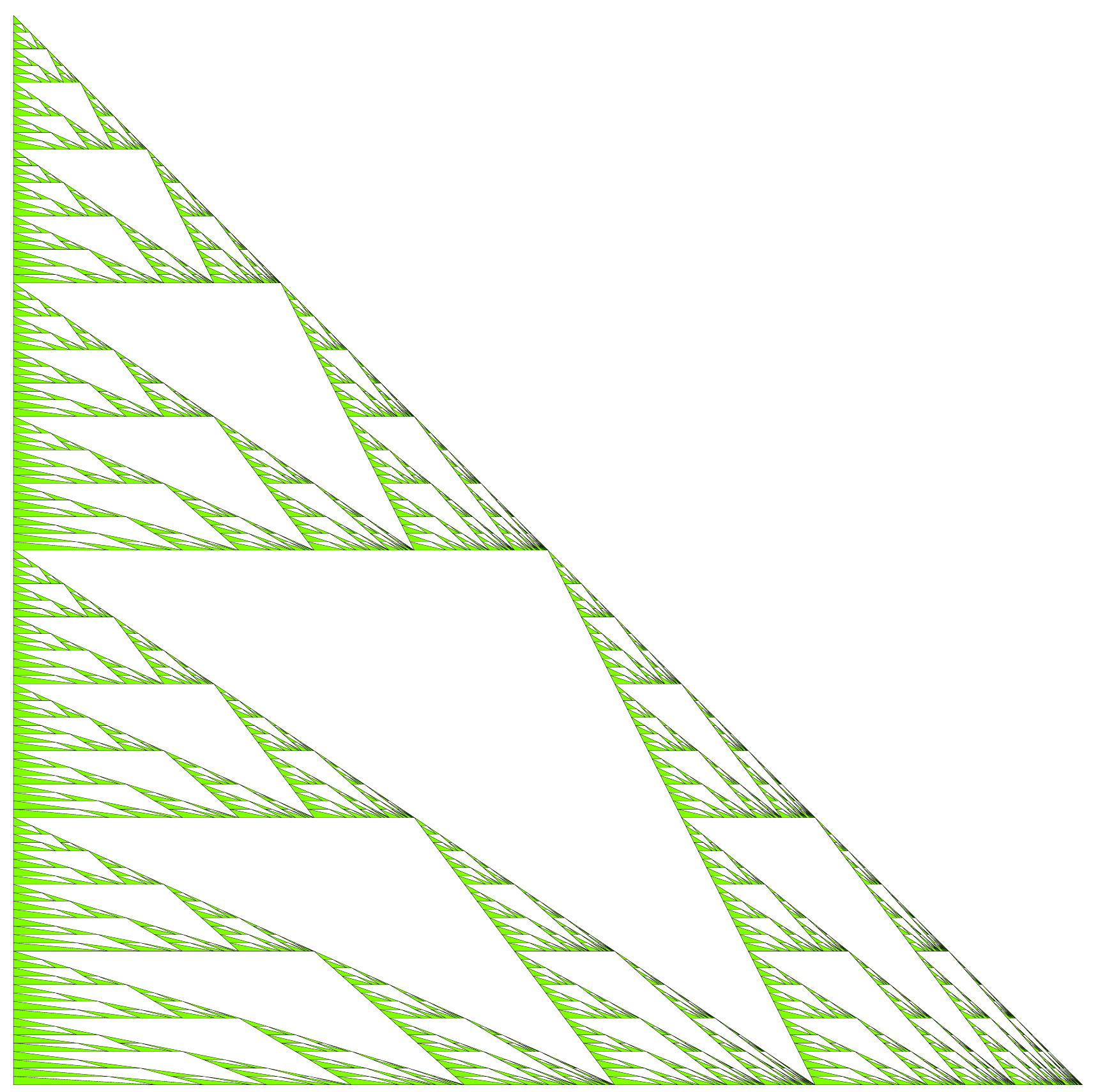}\cr
    $a=\frac{1}{2},b=\frac{1}{4}$&$a=\frac{1}{2},b=\frac{3}{4}$\cr
  \end{tabular}
  \caption{%
    Affine Sierpinski gaskets $\bA_{a,b}$ for four possible pairs $a,b$.
    For each one we plot (in green) the set $T_{7,\bA_{a,b}}$. For the
    upper two the box dimensions can be evaluated analytically and 
    their first six digits are, respectively,  
    $\dim_B R_{\frac{3}{4},\frac{1}{2}}=1.72368$ and 
    $\dim_B R_{\frac{1}{4},\frac{1}{2}}=1.68886$.
    The numerical evaluation of the box dimensions with 
    an elementary box-counting algorithm gives: 
    $\dim_B R_{\frac{3}{4},\frac{1}{2}}\simeq1.71$, 
    $\dim_B R_{\frac{1}{4},\frac{1}{2}}\simeq1.66$, 
    $\dim_B R_{\frac{1}{2},\frac{1}{4}}\simeq1.60$,
    $\dim_B R_{\frac{1}{2},\frac{3}{4}}\simeq1.60$.
    See Table~\ref{tab:aSG} for a comparison of the box dimension
    of these and other affine gaskets with the expoentn of the
    corresponding real projective Sierpinski gaskets.
  }
  \label{fig:aSG}
\end{figure}

Like in Example~\ref{ex:cool}, these matrices are
upper triangular and $\|M_I\|=1$
for all $I\in\cI^3$. To see this first of all we let
$M=\begin{pmatrix}\alpha&\lambda&\mu\cr 0&\beta&\nu\cr 0&0&1\cr\end{pmatrix}$
and notice that $0\leq\lambda+\beta\leq1$ and $0\leq\mu+\nu\leq1$.
Indeed we can limit the discussion to left multiplication by
$M_1$ and $M_3$ and it is immediate to verify by induction that,
assuming the inequalities above for $M_I$, the products 
$$
M_1 M=\begin{pmatrix}
(1-a)\alpha&(1-a)\lambda&(1-a)\mu\cr 0&(1-a)\beta&(1-a)\nu+\alpha\cr 0&0&1\cr
\end{pmatrix},
$$
and 
$$
M_3 M=\begin{pmatrix}
(1-b)\alpha&(1-b)\lambda+(1-a-b)\beta&(1-b)\mu+(1-a-b)\nu+b\cr 
0&a\beta&a\nu\cr 0&0&1\cr
\end{pmatrix},
$$
satisfy the same inequalities.
Now consider the semigroup $\bA_{a,b}$ generated by the 
$A_i=M_i/\det M_i^{1/3}\in SL_3(\bR)$.
Clearly $\|A_I\|=\det M_{i_1}^{-1/3}\cdots\det M_{i_k}^{-1/3}$ for every 
$I=i_1\cdots i_k$ and therefore
$$
\zeta_{\bA_{a,b},k}=\sum_{I\in\cI^3_k}\|A_I\|^{-s}=
(\det M_1^{s/3}+\det M_2^{s/3}+\det M_3^{s/3})^k.
$$
By Theorem~2 then the exponent $s_{\bA_{a,b}}$ is the unique solution 
of the equation 
\begin{equation}
  \label{eq:sab}
(1-a)^{2s/3}+(ab)^{s/3}+(a(1-b))^{s/3}=1.
\end{equation}
%
\begin{proposition}
  $2s_{\bA_{a,b}}\leq3\dim_B R_{a,b}$ within the two triangles $T_{1,2}$,
  with the equal sign holding only in their common vertex.
\end{proposition}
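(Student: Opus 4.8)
The plan is to reduce the claimed inequality to a single pointwise comparison between the two strictly decreasing "pressure" functions whose roots give $\dim_B R_{a,b}$ and $\tfrac{2}{3}s_{\bA_{a,b}}$. Setting $\sigma=\tfrac{2}{3}s_{\bA_{a,b}}$, equation (\ref{eq:sab}) reads $G(\sigma)=1$, where
\[
G(\sigma)=(1-a)^{\sigma}+(ab)^{\sigma/2}+(a(1-b))^{\sigma/2}
\]
is strictly decreasing in $\sigma$. Likewise $d:=\dim_B R_{a,b}$ is the unique root of $F_1(s)=1$ in $T_1$ and of $F_2(s)=1$ in $T_2$, where, from (\ref{eq:FL1}) and (\ref{eq:FL2}), $F_1(s)=(1-a)^{s}+ab^{s-1}+a(1-b)^{s-1}$ and $F_2(s)=(1-a)^{s}+a^{s-1}$, both strictly decreasing. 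Since each $F_i$ is strictly decreasing with $F_i(d)=1$, the target bound $\sigma\le d$ is equivalent to the single inequality $F_i(\sigma)\ge 1=G(\sigma)$, and $\sigma=d$ holds exactly when $F_i(\sigma)=G(\sigma)$. This is the reduction I would carry out first.

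Next I would record the harmless fact $\sigma<2$, which I use throughout: evaluating $G$ at $\sigma=2$ gives $G(2)=(1-a)^2+ab+a(1-b)=(1-a)^2+a=1-a+a^2<1$ for $a\in(0,1)$, so $\sigma<2$ by monotonicity; equivalently $p:=\sigma/2\in(0,1)$. In the triangle $T_1$, where $a\ge\max\{b,1-b\}$, the comparison is term by term. The first terms of $F_1$ and $G$ agree, and
\[
F_1(\sigma)-G(\sigma)=\bigl[ab^{\sigma-1}-(ab)^{\sigma/2}\bigr]+\bigl[a(1-b)^{\sigma-1}-(a(1-b))^{\sigma/2}\bigr].
\]
The ratio $ab^{\sigma-1}/(ab)^{\sigma/2}=(a/b)^{1-p}\ge 1$ because $a\ge b$ and $1-p>0$, and similarly $a(1-b)^{\sigma-1}/(a(1-b))^{\sigma/2}=(a/(1-b))^{1-p}\ge 1$ because $a\ge 1-b$. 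Hence $F_1(\sigma)\ge G(\sigma)=1$, giving $\sigma\le d$; equality forces both $a=b$ and $a=1-b$, i.e. $(a,b)=(\tfrac12,\tfrac12)$.

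In the triangle $T_2$, where $a\le\min\{b,1-b\}\le\tfrac12$, cancelling $(1-a)^\sigma$ and dividing $F_2(\sigma)\ge G(\sigma)$ by $a^{\sigma/2}$ reduces the claim to $a^{p-1}\ge b^{p}+(1-b)^{p}$. I would prove this from two one-variable facts: since $p\in(0,1)$ the map $b\mapsto b^{p}+(1-b)^{p}$ is maximised at $b=\tfrac12$ with value $2^{1-p}$, while $a\le\tfrac12$ and $p-1<0$ give $a^{p-1}\ge(\tfrac12)^{p-1}=2^{1-p}$. Chaining, $a^{p-1}\ge 2^{1-p}\ge b^{p}+(1-b)^{p}$, so $\sigma\le d$, with equality forcing $a=\tfrac12$ and $b=\tfrac12$ together, again $(a,b)=(\tfrac12,\tfrac12)$. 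Since this point is precisely the common vertex of $T_1$ and $T_2$, one concludes $2s_{\bA_{a,b}}\le 3\dim_B R_{a,b}$ on $T_1\cup T_2$ with equality only there.

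I expect the $T_2$ estimate to be the only genuinely non-routine step. Unlike $T_1$ it does not split into a term-by-term comparison, and it relies essentially on the uniform bound $b^{p}+(1-b)^{p}\le 2^{1-p}$ combined with the constraint $a\le\tfrac12$ that is built into the definition of $T_2$. Everything else is bookkeeping with monotone functions: the structural input that makes the whole argument go through is simply that $F_1$, $F_2$ and $G$ are all strictly decreasing, so that comparing their roots reduces to evaluating them at the single point $\sigma$.
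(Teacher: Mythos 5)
Your proof is correct, and at the decisive step it takes a genuinely different route from the paper's. The shared skeleton is the monotonicity reduction: both arguments observe that the strictly decreasing functions whose unique roots are $\tfrac{2}{3}s_{\bA_{a,b}}$ and $\dim_B R_{a,b}$ need only be compared with each other, which amounts to comparing the term $f_{a,b}(s)=(ab)^{s/2}+(a(1-b))^{s/2}$ against $h_{a,b}(s)=ab^{s-1}+a(1-b)^{s-1}$ in $T_1$ and against $g_a(s)=a^{s-1}$ in $T_2$. From there the paper argues \emph{globally}: all three functions take the value $a$ at $s=2$, they are asserted to cross only once, and their behaviour as $s\to0$ is invoked to fix the ordering on $(0,2]$, the evaluation point being $\dim_B R_{a,b}\leq2$ (a geometric fact). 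You instead verify the inequality \emph{pointwise}, at the single point $\sigma=\tfrac{2}{3}s_{\bA_{a,b}}$, after first deriving $\sigma<2$ algebraically from $G(2)=(1-a)^2+a=1-a+a^2<1$; the verification is a term-by-term ratio estimate $(a/b)^{1-p}\geq1$, $(a/(1-b))^{1-p}\geq1$ in $T_1$ and the chain $a^{p-1}\geq 2^{1-p}\geq b^p+(1-b)^p$ (concavity of $x\mapsto x^p$) in $T_2$, with $p=\sigma/2$. Your route buys rigor and locality: it pairs each triangle with the one comparison actually needed there, it shows exactly where the constraints $a\geq\max\{b,1-b\}$ and $a\leq\min\{b,1-b\}$ enter, and it settles the equality case explicitly. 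This matters because the paper's global argument is shaky as written: the claimed limits $\lim_{s\to0}g_a(s)=\lim_{s\to0}h_{a,b}(s)=\infty$ are false (they equal $1/a$ and $a/(b(1-b))$, and their comparison with $f_{a,b}(0)=2$ holds only because of the triangle constraints, which the paper never invokes); the stated goal $f_{a,b}\leq\min\{g_a,h_{a,b}\}$ even fails near $s=0$ at some points of $T_1$ (e.g. $a=\tfrac34$, $b=\tfrac12$, where $g_a(0)=\tfrac43<2$), though only the $h_{a,b}$-comparison is needed there; and the single-intersection claim is left unjustified. What the paper's approach would buy, once repaired, is the pleasant global picture that each relevant pair of curves crosses exactly once, at $s=2$, which makes the uniqueness of the equality point $(\tfrac12,\tfrac12)$ transparent; your pointwise estimates can be read as precisely that repair.
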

\begin{proof}
  After writing (\ref{eq:sab}) in terms of $t=2s/3$ and renaming $t$ to
  $s$ we are left with the equation $(1-a)^s+(ab)^{s/2}+(a(1-b))^{s/2}=1$.
  Comparing this expression with the left-hand sides of (\ref{eq:FL1}) 
  and (\ref{eq:FL2}) we see that it is enough to prove that 
  $$
  (ab)^{s/2}+(a(1-b))^{s/2}\leq\min\{a^{s-1},ab^{s-1}+a(1-b)^{s-1}\}.
  $$
  Since for obvious geometrical reasons $\dim_B R_{a,b}\leq2$ we can assume 
  in the following $s\geq2$.
  Let us denote respectively by $f_{a,b}(s),g_a(s),h_{a,b}(s)$ the three functions
  above and notice that, since by hypothesis $a,b,1-b\in(0,1)$, they are 
  all strictly monotonically decreasing functions of $s$ converging
  to 0 as $s\to\infty$. Moreover $f_{a,b}(2)=g_a(2)=h_{a,b}(2)=a$ and since
  these functions can have only one intersection it is enough
  to verify their behaviour for $s\to0$. A direct calculation shows 
  that, for every pair $a,b\in(0,1)^2$, we have that
  $\lim_{s\to0}f_{a,b}(s)=2$ while $\lim_{s\to0}g_{a}(s)=\lim_{s\to0}h_{a}(s)=\infty$.
\end{proof}
Numerical experiments (see Fig.~\ref{fig:aSG}) clearly
suggest that, even outside of the triangles $T_{1,2}$, 
$\dim_B R_{a,b}$
always larger than $\frac{2}{3}s_{\bA_{a,b}}$
with the only exception of the case $a=1/2,b=1/2$, when
these two quantities coincide. Moreover it appears that,
roughly, $\frac{2}{3}s_{\bA_{a,b}}\geq\frac{9}{10}\dim_B R_{\bA}$.
%
\subsubsection{The cubic semigroups $\boC^\alpha_n$}
\label{sec:cubic}
Recall that, by definition, $f_i^\alpha(e_i)=\alpha e_i$, 
$f_i^\alpha(e_j)=e_j+e_i$, $j\neq i$. 

\medskip\noindent
$\boldsymbol{n=3}.$ In $\bR^3$ we use coordinates $(x,y,z)$
with respect to the frame $e'_1=e_1+e_3$, $e'_2=e_2+e_3$, $e'_3=e_3$,
so that the $f_i$ are represented by the matrices 
$$
A_1=\begin{pmatrix}\alpha-1&0&1\cr 0&1&0\cr \alpha-2&0&2\cr\end{pmatrix},
A_2=\begin{pmatrix}1&0&0\cr 0&\alpha-1&1\cr 0&\alpha-2&2\cr\end{pmatrix},
A_3=\begin{pmatrix}1&0&0\cr 0&1&0\cr 2-\alpha&2-\alpha&\alpha\cr\end{pmatrix}.
$$
\begin{table}
  \centering
  \begin{tabular}{|c|c|c|c|c|c|c|}
    \hline
    $a$&$b$&
    \begin{tabular}{c}
      $s_\bA$\\ (num.)\\
    \end{tabular}
    &
    \begin{tabular}{c}
      $s_\bA$\\ (anal.)\\
    \end{tabular}
    &
    $2s_\bA/3$&
    \begin{tabular}{c}
      $\dim_B R_\bA$\\ (num.)\\
    \end{tabular}
    &
    \begin{tabular}{c}
      $\dim_B R_\bA$\\ (anal.)\\
    \end{tabular}\cr
    \hline
    $1/4$&$1/2$&2.44&2.42632&1.61755&1.66&1.68886\cr
    \hline
    $3/4$&$1/2$&2.48&2.45425&1.63617&1.71&1.72368\cr
    \hline
    $1/2$&$3/4$&2.35&2.34443&1.56295&1.60&--\cr
    \hline
    $1/5$&$3/10$&2.44&2.43735&1.62490&1.76&1.71262\cr
    \hline
    $4/5$&$3/10$&2.47&2.46960&1.64640&1.77&--\cr
    \hline
    $3/10$&$1/5$&2.43&2.37354&1.58236&1.75&--\cr
    \hline
    $7/10$&$1/5$&2.35&2.35249&1.56833&1.72&1.63373\cr
    \hline
  \end{tabular}
  \caption{%
    Values of the exponents of affine Sierpinski gaskets $\bA_{a,b}$
    for several pairs $a,b$ and of the box dimension of the
    corresponding residual sets $R_{a,b}$. Numerical evaluations
    for $s_\bA$ were done by calculating $N_\bA(k)$ for the values
    $k=2^p$, $p=1,\dots,12$, and are presented to motivate our
    confidence in a relative error not bigger than $1\%$ in the
    other evaluations provided throughout the paper when an
    analytical evaluation is not available. Numerical evaluations
    for the box dimension of $R_{a,b}$ were done via an elementary
    box-counting algorithm and a comparison with the available 
    analytical evaluations suggest that their relative error 
    is about $10\%$.
  }
  \label{tab:aSG}
\end{table}
In the affine chart $[x:y:z]\to(u,v)=(x/z,y/z)$ of $\bR P^2$ we have therefore that
$$
\begin{cases}
  \psi_1(x,y) = \left(\frac{(\alpha-1)u+1}{(\alpha-2)u+2},\frac{v}{(\alpha-2)u+2}\right)\cr 
  \noalign{\medskip}
  \psi_2(x,y) = \left(\frac{u}{(\alpha-2)v+2},\frac{(\alpha-1)v+1}{(\alpha-2)v+2}\right)\cr 
  \noalign{\medskip}
  \psi_3(x,y) = \left(\frac{u}{(2-\alpha)(u+v)+\alpha},\frac{v}{(2-\alpha)(u+v)+\alpha}\right)\cr 
\end{cases}
$$
and the vertices of the invariant triangle $T_{F_3^\alpha}$ are $[e_1]=(1,0)$, 
$[e_2]=(0,1)$ and $[e_3]=(0,0)$. A direct calculation of the eigenvalues 
of the Jacobian matrices $D\psi_i$ shows that, within $T_{F_3^\alpha}$, 
$$
\min\{\frac{1}{\alpha},\frac{\alpha}{4}\}d(x,y)
\leq
d(\psi_i(x),\psi_i(y))
\leq
\max\{\frac{1}{\alpha},\frac{\alpha}{4}\}d(x,y)
$$
for all $i=1,2,3$, namely the semigroup $\langle \psi_1,\psi_2,\psi_3\rangle$, 
{\em as a IFS}, is hyperbolic for $\alpha\in(1,4)$ and parabolic for $\alpha=1,4$. 
The $\psi_i$ are not contractions 
{\em with respect to the Euclidean distance in this chart} for
the other values of $\alpha$ (see Fig.~\ref{fig:rSG} for the plot of 
$T_{7,F_3^\alpha}$ for several values of $\alpha$).

Analytical bounds for the Hausdorff dimension of the residual sets 
$R_{F_3^\alpha}$ can be obtained via Propositions 9.6 and 9.7 in~\cite{Fal90}, 
namely 
$$
\min\{\log_3\frac{4}{\alpha},\frac{1}{\log_3\alpha}\}
\leq
\dim_H R_{F_3^\alpha}
\leq
\max\{\log_3\frac{4}{\alpha},\frac{1}{\log_3\alpha}\}.
$$ 
For $\alpha=2$ we get, as expected, $\dim_H R_{F_3^\alpha}=\log_32$.
\begin{figure}
  \centering
  \begin{tabular}{ccc}
    \includegraphics[width=4.5cm,clip=true,trim=80 250 20 100]{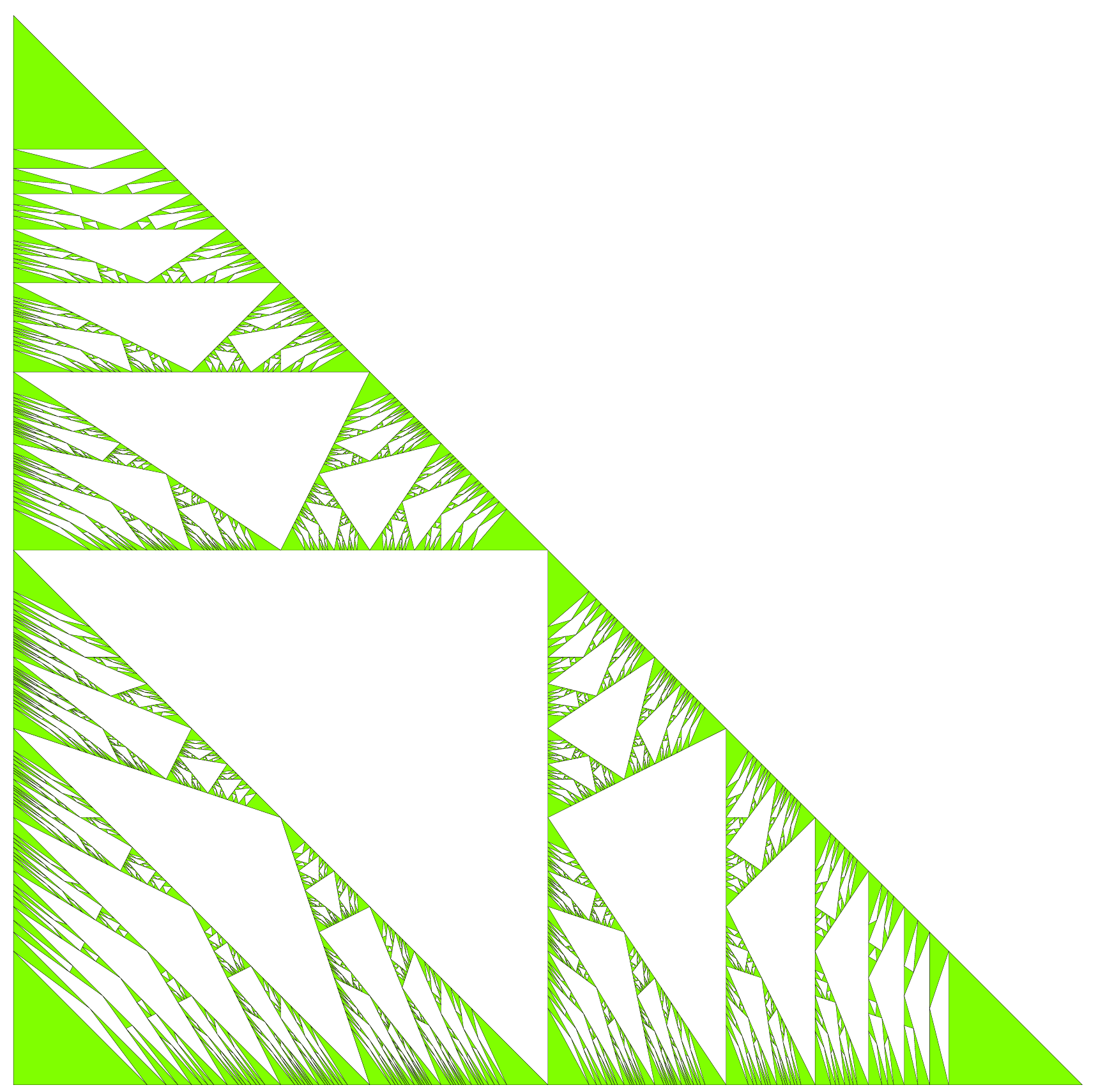}&
    \includegraphics[width=4.5cm,clip=true,trim=80 250 20 100]{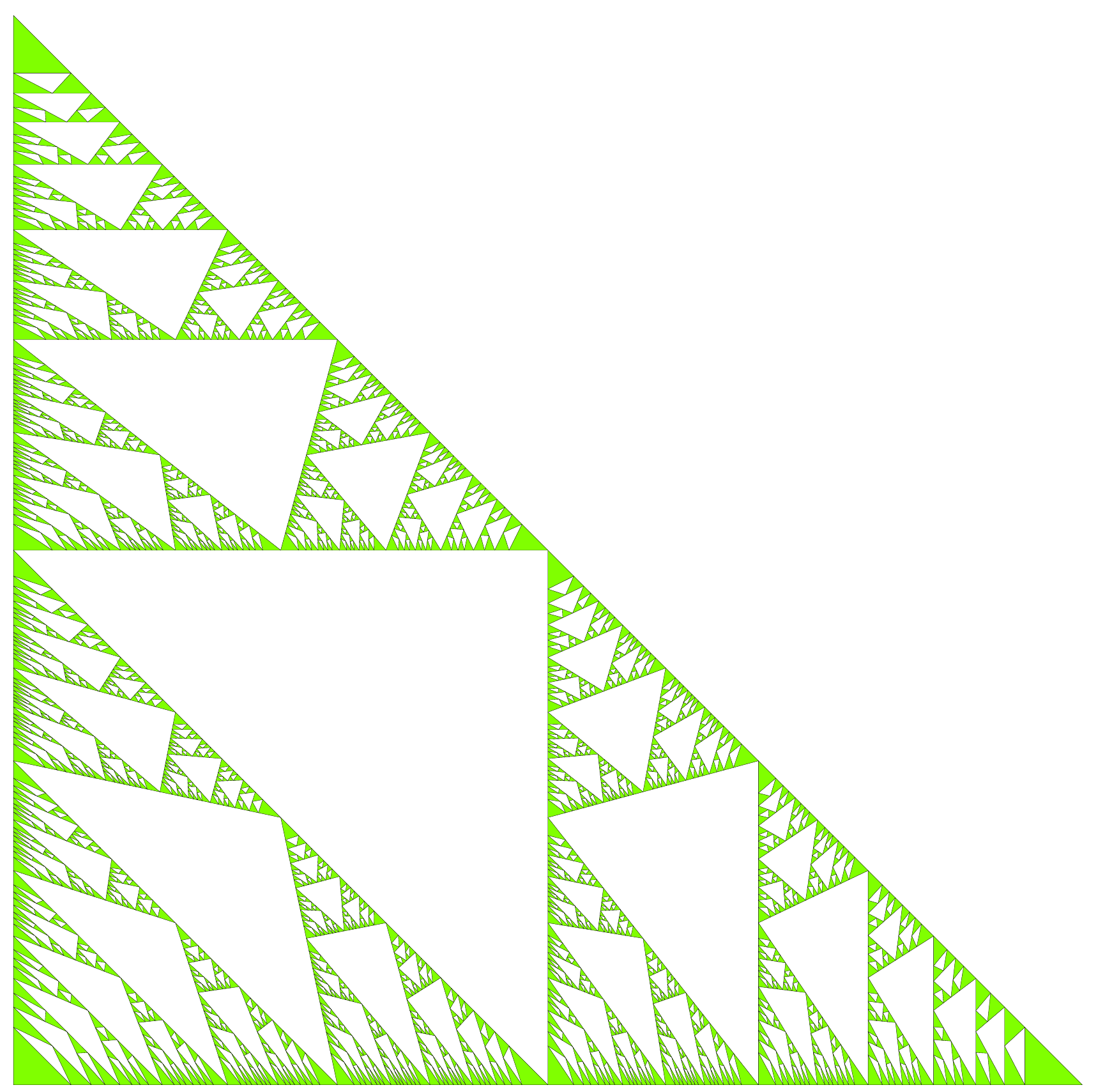}&
    \includegraphics[width=4.5cm,clip=true,trim=80 250 20 100]{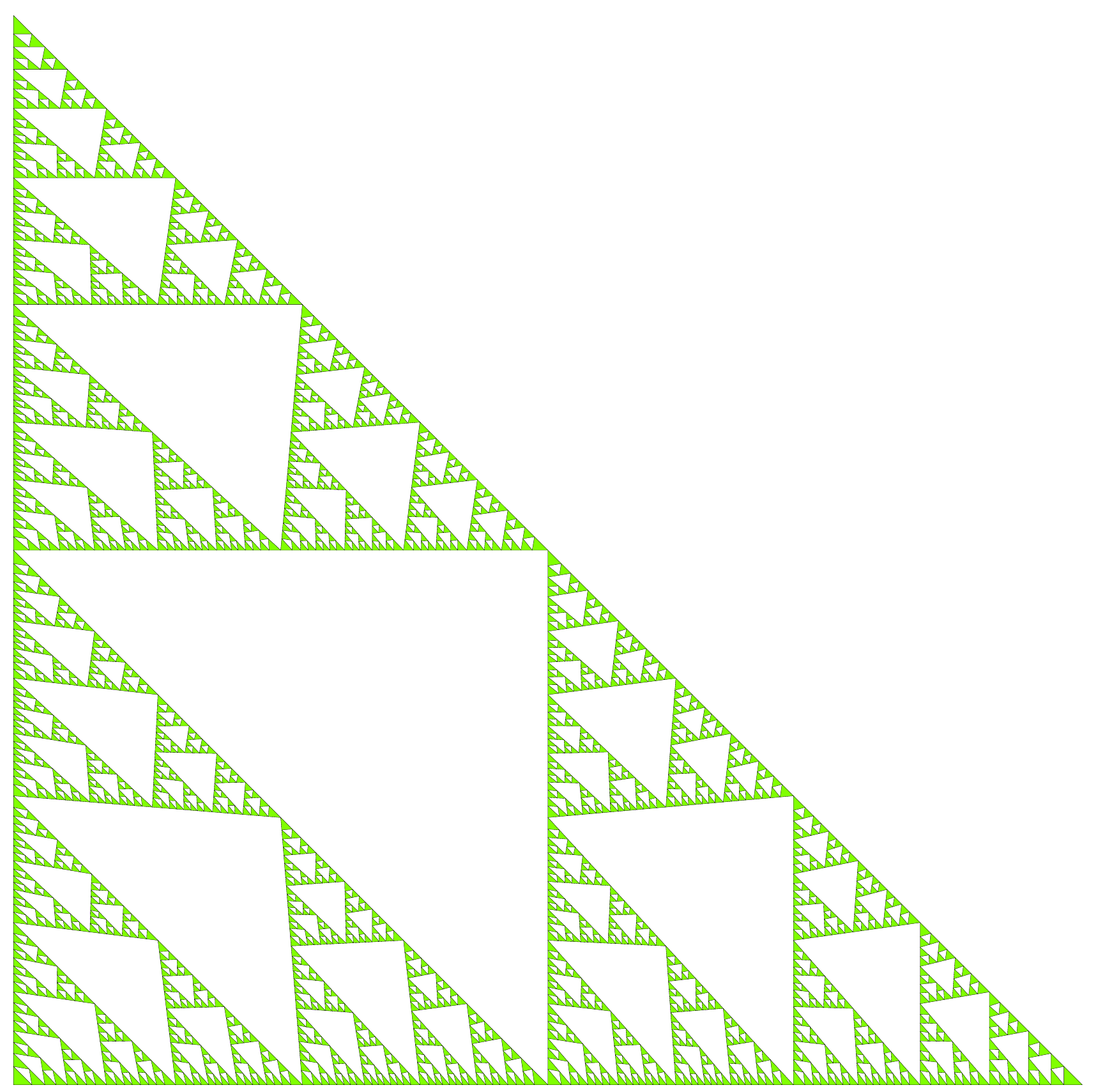}\cr
    1&1.3&1.7\cr
    \includegraphics[width=4.5cm,clip=true,trim=80 250 20 100]{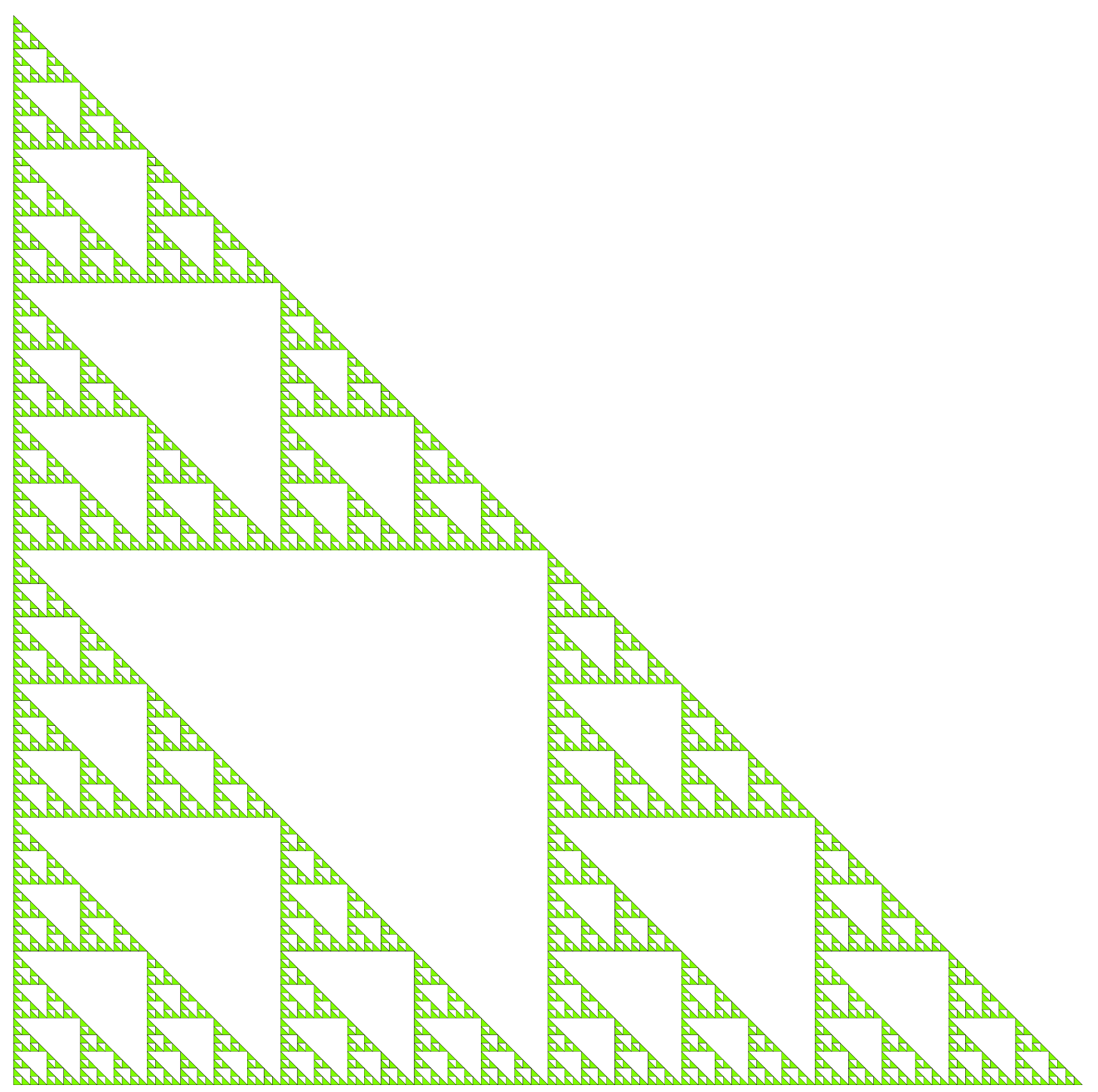}&
    \includegraphics[width=4.5cm,clip=true,trim=80 250 20 100]{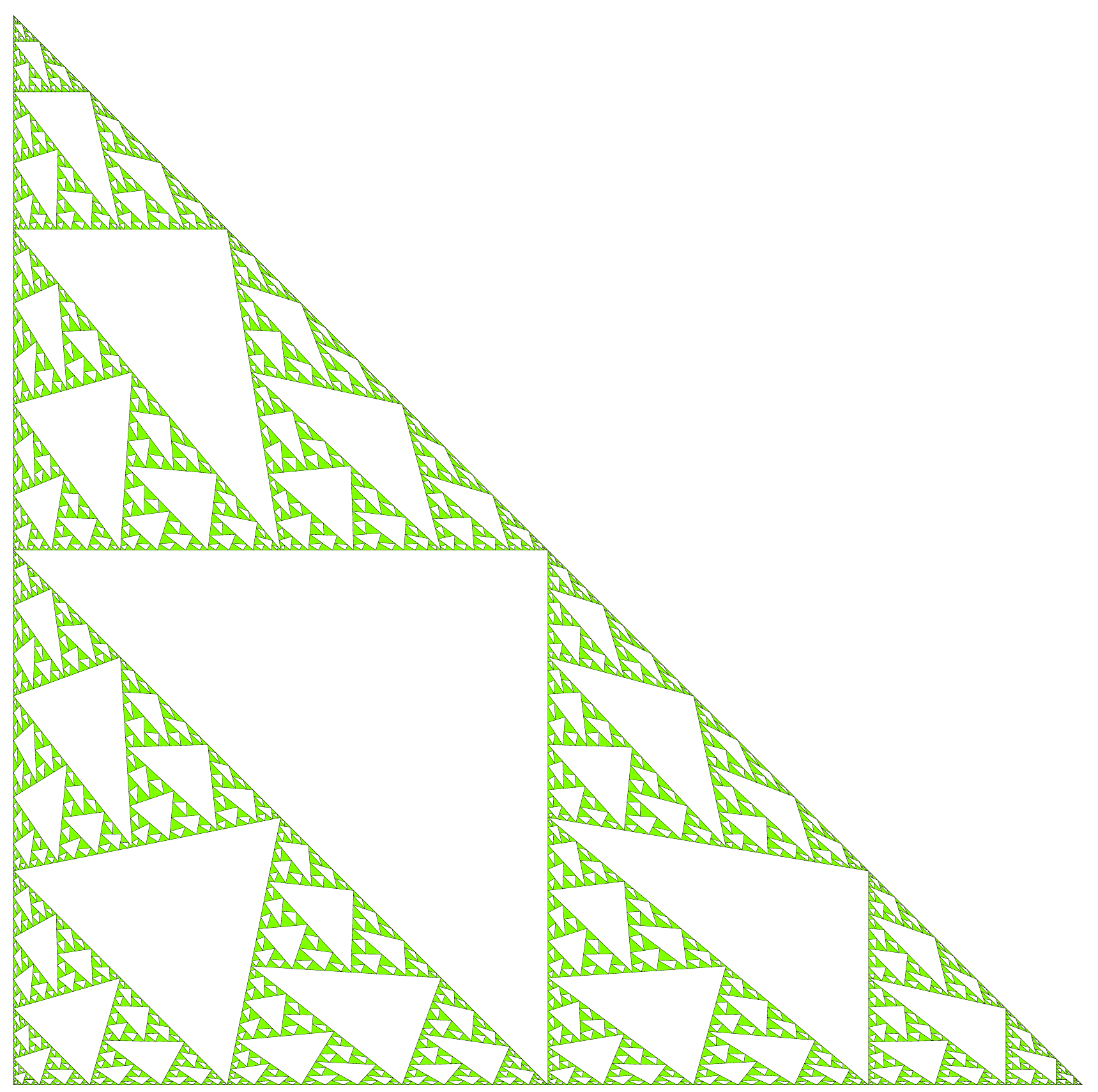}&
    \includegraphics[width=4.5cm,clip=true,trim=80 250 20 100]{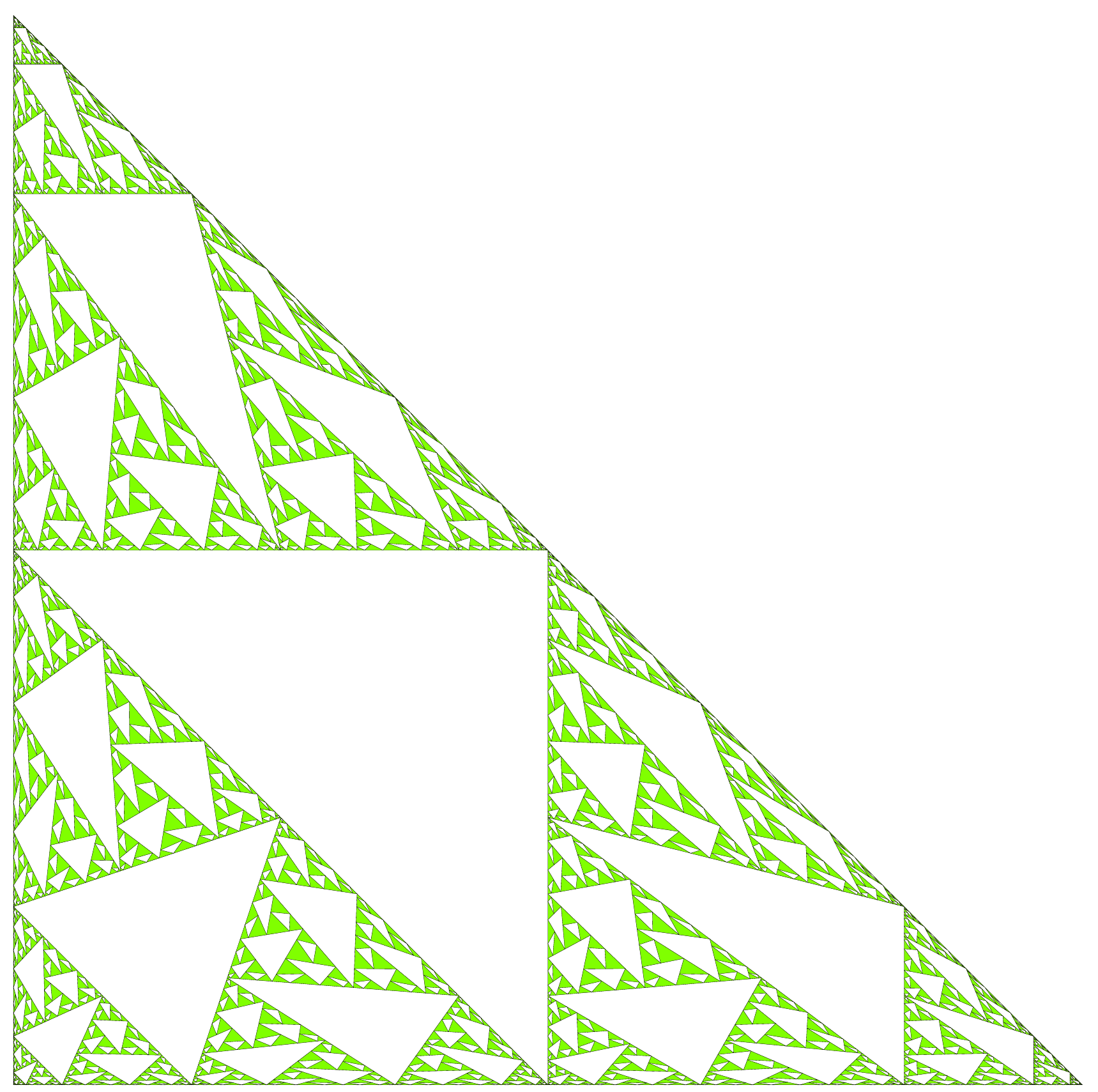}\cr
    2&3&4\cr
    \includegraphics[width=4.5cm,clip=true,trim=80 250 20 100]{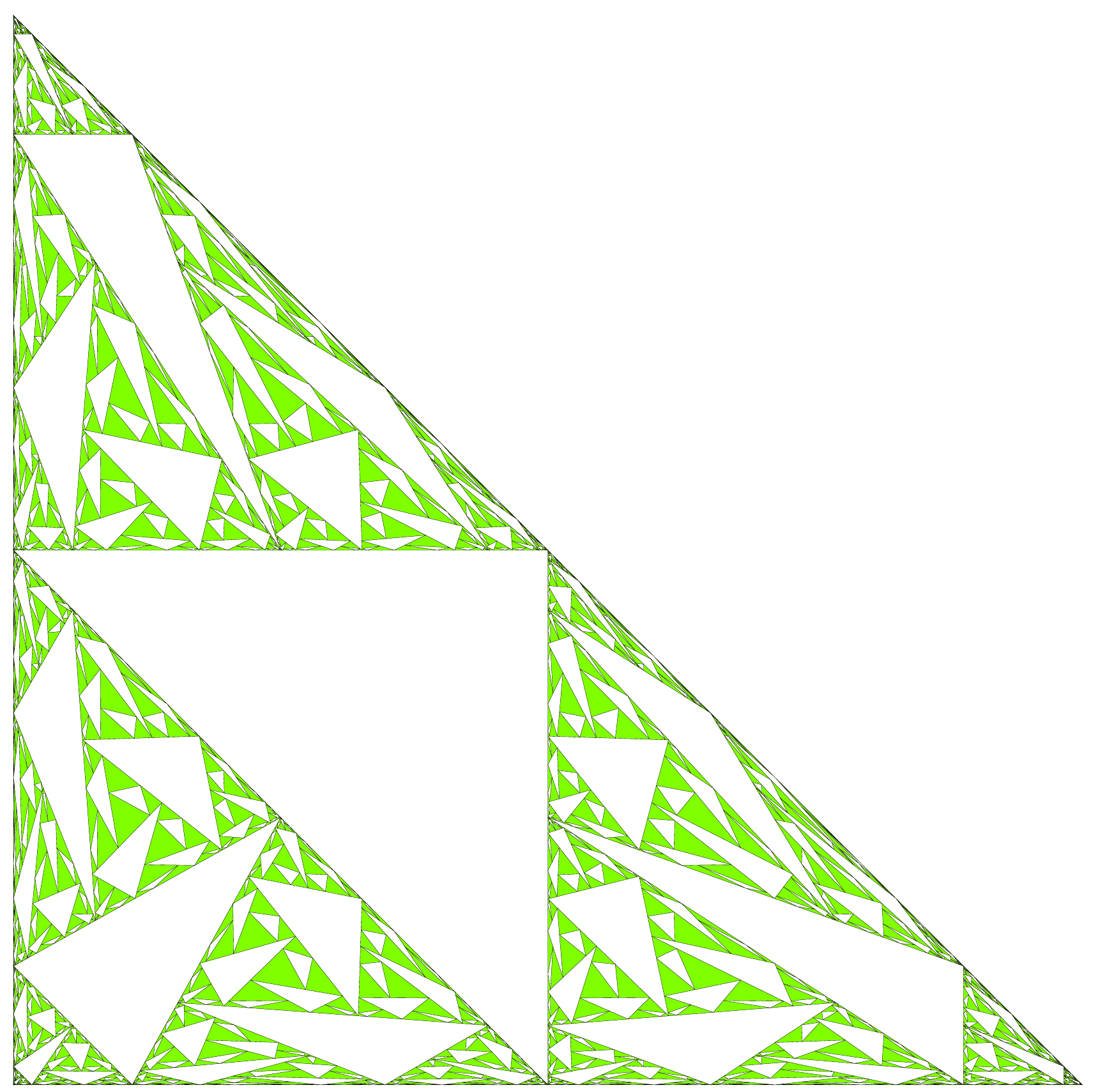}&
    \includegraphics[width=4.5cm,clip=true,trim=80 250 20 100]{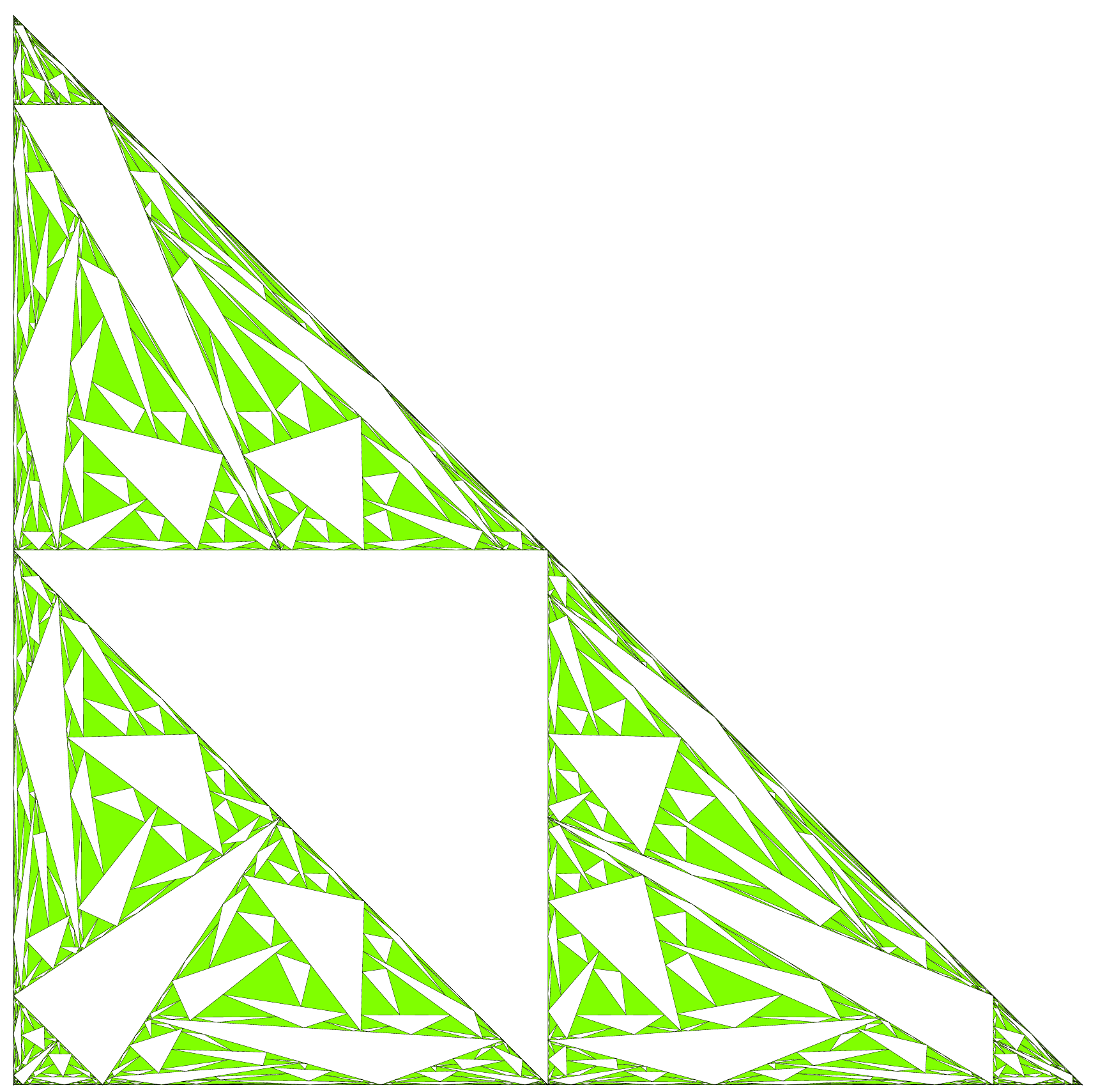}&
    \includegraphics[width=4.5cm,clip=true,trim=80 250 20 100]{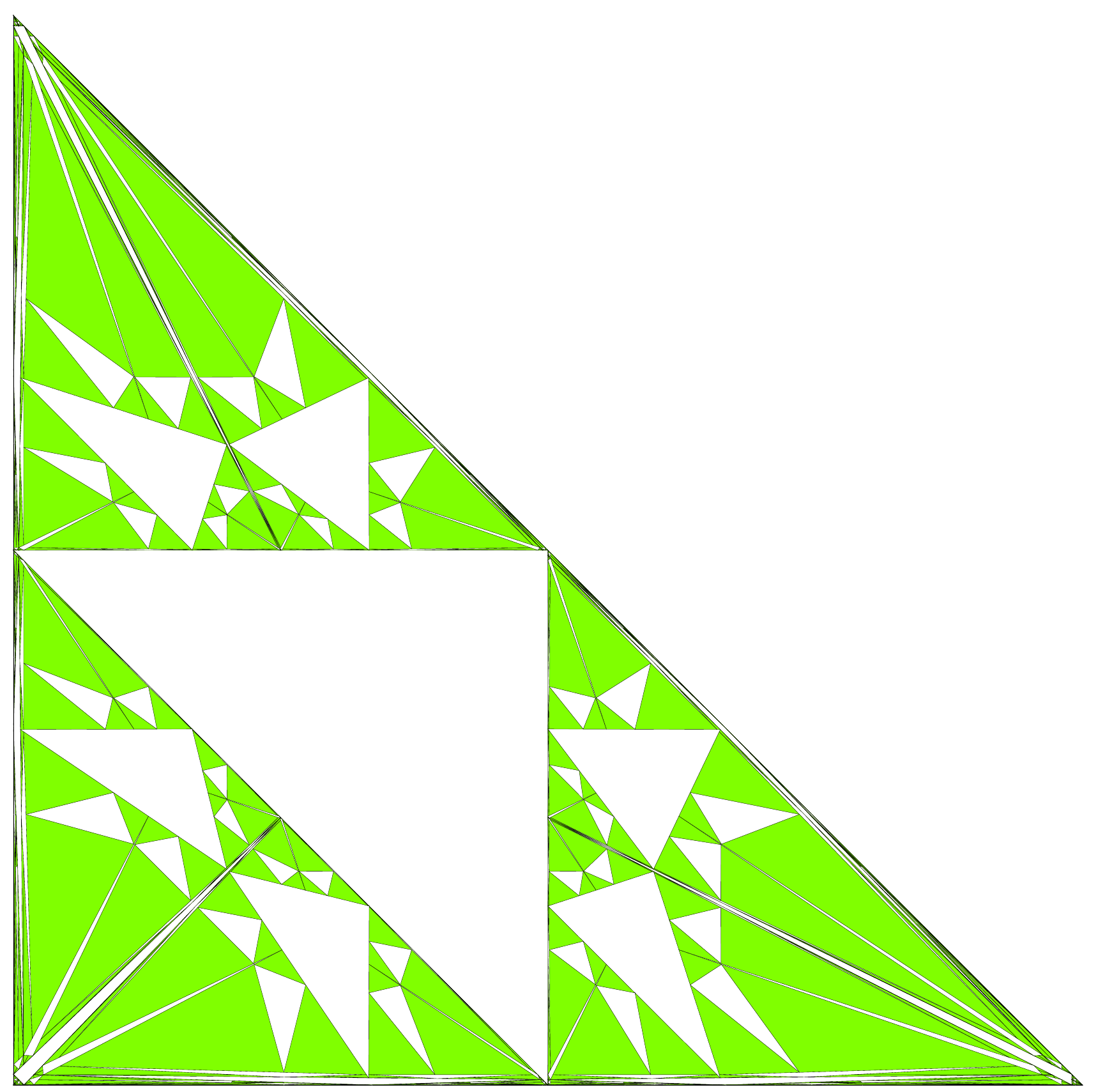}\cr
    7&10&100\cr
  \end{tabular}
  \caption{%
    Real projective Sierpinski gaskets $F_3^\alpha$ for several 
    values of $\alpha$. For each one we plot (in green) the set
    $T_{7,F_3^\alpha}$. Heuristic numerical estimates of their exponents
    and of the box dimension for the corresponding residual sets 
    for $\alpha\leq7$ are given in Table~\ref{tab:rSG}
  }
  \label{fig:rSG}
\end{figure}


%
\begin{table}
  \centering
  \begin{tabular}{|c|c|c|c|}
    \hline
    $\alpha$&$s_{F_3^\alpha}$&$2s_{F_3^\alpha}/3$&$\dim_B R_{F_3^\alpha}$\cr
    \hline
    1&2.447&1.631&1.72\cr
    \hline
    1.3&2.395&1.596&1.72\cr
    \hline
    1.7&2.377&1.585&1.71\cr
    \hline
    2&2.359&1.573&1.59\cr
    \hline
    3&2.378&1.586&1.71\cr
    \hline
    4&2.389&1.593&1.73\cr
    \hline
    7&2.394&1.596&1.76\cr
    \hline
  \end{tabular}
  \caption{%
    Numerical evaluation of the exponent of the
    real projective gaskets $F_3^\alpha$ and of the box 
    dimension of the corresponding residual sets for several values
    of $\alpha$. No analytical formula is known for
    these quantity. These data confirms the relation
    $2s_\bA/3\leq\dim_B R_\bA$ already observed in 
    Table~\ref{tab:aSG} and the fact that roughly
    $2s_\bA/3\geq 9\dim_B R_\bA/10$.
  }
  \label{tab:rSG}
\end{table}
Analytical bounds for the exponents $s_{\bF_3^\alpha}$ can be obtained
from Theorem~1. Here we present calculations for $F^1_3=\cC_3$,
the cubic gasket.
Due to the symmetry between the generators it turns out that
$$
\mu_{\cC_3}(s)=6\mu_{\cC_3 A_{12}}=3\cdot2^{1-s}\zeta(s),
$$
from which, as the unique solution of $\mu_{\cC_3}(s)=3^s$, 
we get the lower bound $1.52\leq s_{\cC_3}$.
To get the first upper bound we must consider the function
$$
\mu_{\cC_3,2}(s)=3\cdot2^{1 - s}\left(
3 \zeta(s) + 2^{2 - s} \zeta(s,\frac{7}{4})
- 2^{1-s} - 3
\right),
$$
from which we get $1.7\leq s_{\cC_3}\leq 7.1$ as the unique solutions
of $\mu_{\cC_3,2}(s)=3^{\pm s}$. In order to get more meaningful 
bounds we should consider some $\mu_{\cC_3,k}$ with a large $k$ but
leave this to a future paper.
Interpolating on the curve 
$\log N_{\cC_3}(k)$ as function of $\log k$ for $k=2^r$, $1\leq r\leq 13$,
we get a reliable estimate of $s_{\cC_3}\simeq 2.444$. A rough numerical
evaluation of the box dimension of $R_{\cC_3}$ by counting the number
of squares needed to cover the fractal gives $\dim_B\simeq 1.72$,
compatible with the relation $3\dim_B R_{\cC_3}\geq2s_{\cC_3}$ 
suggested in Conjecture~\ref{thm:conj}.

\medskip\noindent
$\boldsymbol{n\geq4.}$ In $\bR^n$ we use coordinates $(x^1,\dots,x^n)$
with respect to the frame 
$e'_1=e_1+e_n,\dots,e'_{n-1}=e_{n-1}+e_n,e'_n=e_n$.
For $n=4$ the matrices $A_1$ and $A_4$ are given by
$$
A_1=\begin{pmatrix}\alpha-1&0&0&1\cr0&1&0&0\cr0&0&1&0\cr\alpha-2&0&0&2\cr\end{pmatrix},
A_4=\begin{pmatrix}1&0&0&0\cr 0&1&0&0\cr 0&0&1&0\cr 2-\alpha&2-\alpha&2-\alpha&\alpha\cr\end{pmatrix}
$$
and $A_2$ and $A_3$ can be obtained via permutations of $A_1$. Similarly 
happens for $n\geq4$. Correspondingly we use coordinates $u^i=x^i/x^n$, 
$i=1,\dots,n-1$, and obtain
$$
\psi_1(u^i)=\left(
\frac{(\alpha-1)u^1+2}{(\alpha-1)u^1+2},
\frac{u^2}{(\alpha-1)u^1+2},\dots,
\frac{u^{n-1}}{(\alpha-1)u^1+2}\right),
$$
similarly for $i<n-1$ and
$$
\psi_{n-1}(u^i)=\left(
\frac{u^1}{(2-\alpha)(u^1+\dots+u^{n-1})+\alpha},
\dots,
\frac{u^{n-1}}{(2-\alpha)(u^1+\dots+u^{n-1})+\alpha}
\right).
$$
A direct evaluation of the eigenvalues of the Jacobian matrices
of the $\psi_i$ gives the same result we got for $n=3$. In particular
for every $n\geq3$ we have that the gasket $\bF_n^\alpha$ is a hyperbolic
IFS for $\alpha\in(1,4)$ and a parabolic IFS for $\alpha=1,4$.
The bounds on the Hausdorff dimension of the residual sets give
$$
\min\{\log_n\frac{4}{\alpha},\frac{1}{\log_n\alpha}\}
\leq
\dim_H R_{F_n^\alpha}
\leq
\max\{\log_n\frac{4}{\alpha},\frac{1}{\log_n\alpha}\}.
$$ 
\begin{figure}
  \centering
  \begin{tabular}{cc}
    \includegraphics[width=7cm]{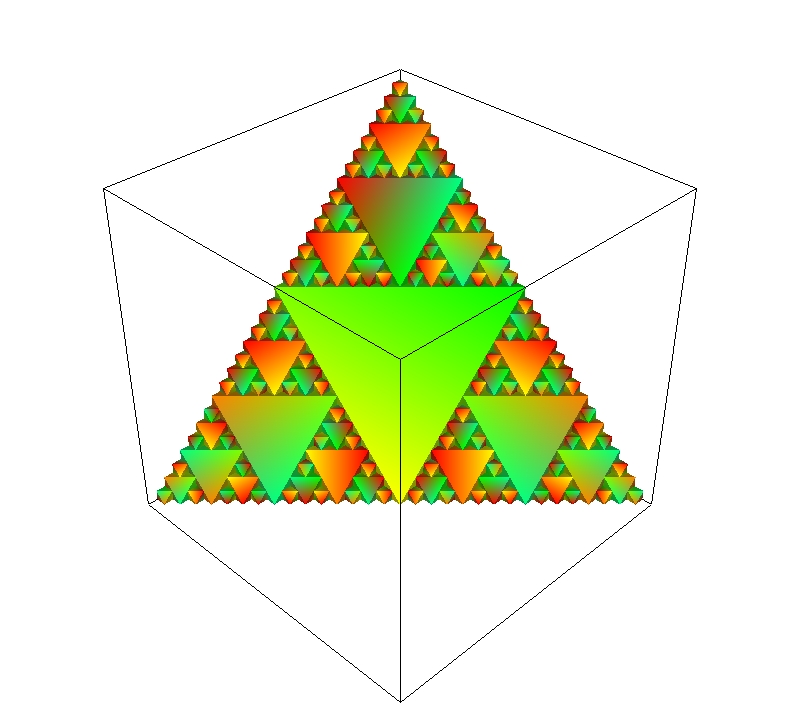}&\includegraphics[width=7cm]{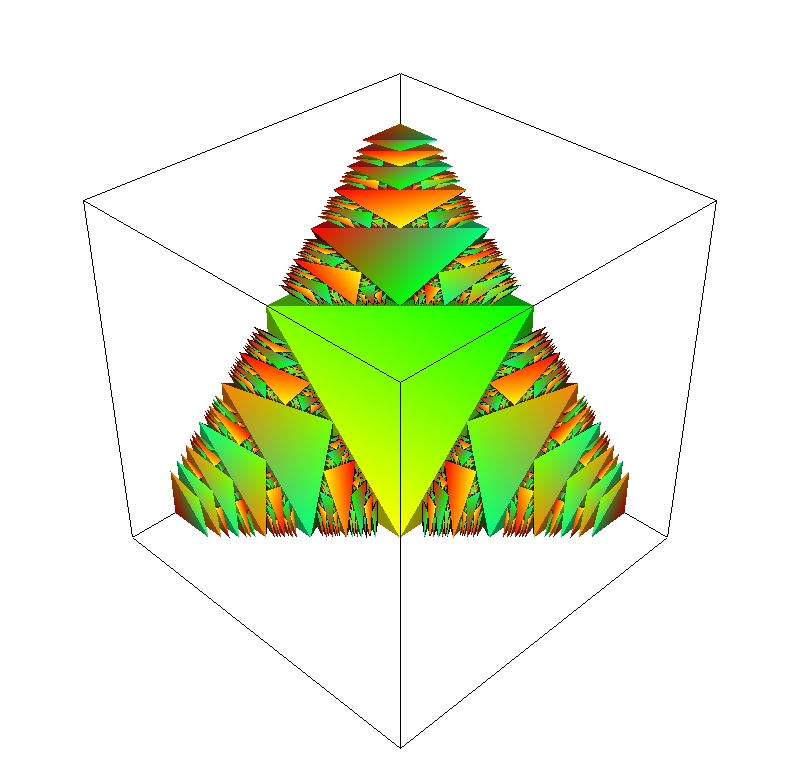}\cr
    \noalign{\medskip}
    \includegraphics[width=5cm]{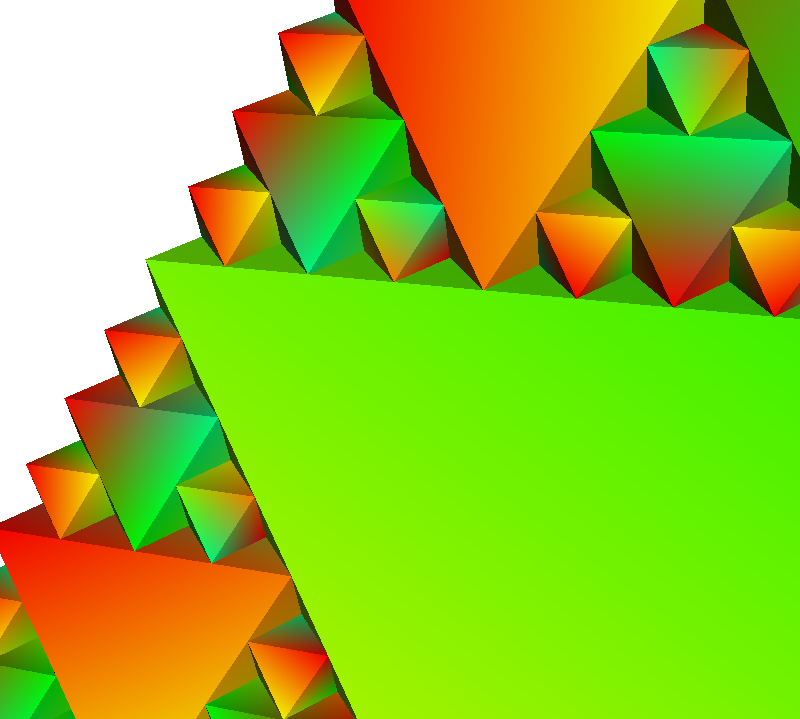}&\includegraphics[width=5cm]{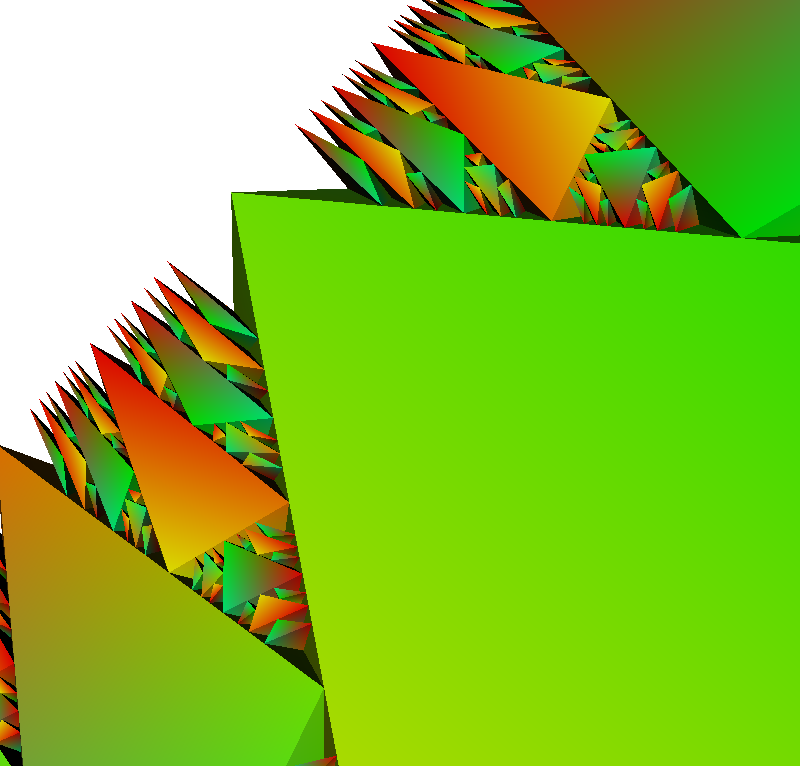}\cr
    \end{tabular}
    \caption{%
      \footnotesize
      Images of the Sierpinski ($F^2_4$) and the cubic ($F^1_4$)
      tetrahedra. In figure we show a full picture (above) and 
      a detail (below) for the sets $T_{5,F^2_4}$ (left) and 
      $T_{5,F^1_4}$ (right).
    }
    \label{fig:rSG4}
\end{figure}
For $n=4,\alpha=2$ we get the well-known result that the dimension
of the standard Sierpinski tetrahedron is equal to 2. Numerical 
evaluations suggest that the same could hold for the 4-dimensional 
version of the cubic gasket (see Fig.~\ref{fig:rSG4} for a picture
of the two sets). 
\subsubsection{The Apollonian gasket}
We conclude the paper with a brief discussion on the Apollonian semigroup,
namely the semigroup $\bH\subset SL_4(\bN)$ generated by the
matrices $H_1,H_2,H_3$ introduced in the Motivational Example~2.
This case was thoroughly studied, somehow implicitly, by Boyd, 
in particular in~\cite{Boy72,Boy73b,Boy82}, in the context of the 
sequence of curvatures in an Apollonian gasket and Boyd's investigation
and arguments were the 
archetype for most results and arguments in Section~\ref{sec:norms} 
of the present paper. 

Recall that the matrix $H_1$ has only one eigenvalue (hence
equal to 1) and therefore, even though $H_2$ and $H_3$ have eigenvalues
larger than 1, $\bH$ is a {\em parabolic} gasket. Next proposition
grants that our results do apply in fact, as expected, 
to $\bH$ itself and shows that $s_\bH\leq\infty$ with arguments
that entail only the matrices themselves.
\begin{lemma}
  Assume that matrices $A_1,\dots,A_m\in M_n(\bN)$ have the following
  properties:
  \begin{enumerate}
  \item they have some number $k\neq1$ of rows containing a single 
    entry equal to 1 and all other equal to 0 and these entries equal 
    to 1 belong all to different columns and in those columns all 
    entries are either 0 or 1;
  \item other rows are such that each of their entries is smaller than 
    the sum of the remaining $n-1$ entries.
  \end{enumerate}
  Then this property is shared by all products of the $A_i$.
\end{lemma}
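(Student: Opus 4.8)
The plan is to argue by induction on the number of factors, so that it suffices to prove that whenever $A$ and $B$ both satisfy properties 1 and 2 the product $C=AB$ does too. Staying inside $M_n(\bN)$ is immediate, since $C_{iq}=\sum_j A_{ij}B_{jq}$ is a non-negative integer combination of the rows of $B$; the entire content is to track how the two row-types propagate. I would call a row a \emph{unit row} if it has a single entry $1$ and the rest $0$ (the $k$ rows of property 1) and a \emph{spread row} if each entry is strictly smaller than the sum of the other $n-1$ (property 2), and record that the $i$-th row of $C$ is the combination $\sum_j A_{ij}\cdot(\text{row }j\text{ of }B)$.

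First I would dispose of the rows of $C$ coming from unit rows of $A$: if row $i$ of $A$ is the unit row with its $1$ in column $p$, then row $i$ of $C$ is literally row $p$ of $B$, hence again a unit row or a spread row according to the type of that row in $B$. Thus the unit rows of $C$ are exactly those $i$ for which row $i$ of $A$ is a unit row pointing at a column $p$ that itself indexes a unit row of $B$, and the column carrying the new $1$ is the one carrying $B$'s $1$ in row $p$; distinctness of these columns in $C$ then follows at once from the two distinctness hypotheses. The heart of the matter is that every other row of $C$—one coming from a \emph{spread} row $i$ of $A$—is again spread. Writing $\sigma_j=\sum_q B_{jq}$ for the $j$-th row-sum of $B$, the spread condition for row $i$ of $C$ in column $q$ reads $\sum_j A_{ij}(2B_{jq}-\sigma_j)<0$. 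Here each spread row $j$ of $B$ contributes a coefficient $2B_{jq}-\sigma_j\le -1$ (integrality plus strict inequality), each unit row of $B$ not pointing at $q$ contributes $-1$, and, crucially, by the distinct-column hypothesis \emph{at most one} unit row $j_q$ of $B$ points at $q$ and contributes $+1$. A crude bound then gives $\sum_j A_{ij}(2B_{jq}-\sigma_j)\le 2A_{ij_q}-\sum_j A_{ij}<0$, the final inequality being precisely the spread condition for row $i$ of $A$ (and the bound is even stronger when no unit row points at $q$). This is the computation in which properties 1 and 2 interlock, and it is the core of the proof.

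The step I expect to be the main obstacle is the remaining clause of property 1—that in the columns occupied by the unit rows of $C$ all entries are $0$ or $1$—together with keeping the count $k\neq1$ under control. Unlike the spread and distinctness conditions, this is an assertion about \emph{all} rows of $C$ in those columns, the spread rows included, and the crude estimates above give no handle on it. I would attempt it by identifying the distinguished columns of $C$ with a subset of those of $B$ and showing that left-multiplication by $A$ preserves $0/1$-valuedness there, which forces a direct analysis of how a spread row of $A$ acts on a $0/1$ column of $B$. I expect this to demand the full strength of the hypotheses on both factors, and possibly a sharpening of the statement, and it is the point at which the argument must be carried out with the greatest care.
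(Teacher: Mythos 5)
Your core computation is correct, and it is in fact tighter than the paper's own argument. Both proofs share the same skeleton: induct on the number of factors, observe that a unit row of $A$ merely copies a row of $B$ into $C=AB$, then show that a spread row of $A$ yields a spread row of $C$. They diverge at the key estimate. The paper splits according to the number $k_B$ of unit rows of $B$, treats only the cases $k_B=0$ and $k_B\geq 2$, and in the latter case rests on the claim that the entries $A^i_j$, with $j$ ranging over the unit rows of $B$, ``are both equal to 1''. That claim does not follow from the hypotheses, because the $0/1$ columns of $A$ are the columns carrying the units of $A$, which need not be indexed by the unit rows of $B$: already for the two generators $A=H_2$, $B=H_1$ of the Hirst semigroup one has $A^4_2=0$. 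Moreover the case $k_B=1$ is never discussed. Your bookkeeping --- spread rows of $B$ contribute $2B_{jq}-\sigma_j\leq-1$, unit rows contribute $-1$ except for the at most one pointing at the column $q$ under scrutiny, and the resulting bound $2A_{ij_q}-\sum_jA_{ij}<0$ is exactly the spread condition on row $i$ of $A$ --- uses neither the value of $k_B$ nor the $0/1$-column hypothesis, so it is valid uniformly and repairs the actual hole in the paper's proof.

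The step you set aside as the main obstacle is not an obstacle you failed to overcome: it is false. Neither the clause $k\neq1$ nor the $0/1$-valuedness of the distinguished columns is preserved by products. For the Hirst generators,
$$
H_2H_1=\begin{pmatrix}1&0&0&0\cr 1&1&1&2\cr 4&4&1&4\cr 3&2&1&3\cr\end{pmatrix},
$$
which has exactly one unit row (so the clause $k\neq1$ fails) and whose first column, the one carrying that unit, is $(1,1,4,3)^T$, not $0/1$-valued. Consequently the lemma read literally is false, the paper's proof silently ignores these clauses, and no induction carrying the full property as its hypothesis can close --- neither yours nor the paper's, as both are set up. The induction must instead be run on the weaker invariant that your estimate actually uses and preserves: every row of a product is either a unit row or a spread row, with the units in pairwise distinct columns. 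That invariant is all the paper needs downstream (the fastness of $\bH$ only requires the third row of every product to be spread), so your argument, understood as a proof of this corrected statement, is complete, and your suspicion that the statement itself needed sharpening was exactly right.
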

\begin{proof}
  We prove the lemma by induction. It is enough to consider the products
  of two generic matrices $A=(A^i_j)$, $B=(B^i_j)$, satisfying the hypotheses.

  Assume first that $k=0$ for $B$, namely 
  $\sum_{k\neq\ell}B^i_k\geq B^i_\ell$ for all $i,\ell$. Then 
  $$
  \sum_{k\neq\ell}(AB)^i_k=\sum_{\substack{1\leq j\leq n\\ k\neq \ell}}A^i_j B^j_k=
  \sum_{1\leq j\leq n}A^i_j\sum_{k\neq\ell}B^j_k\geq\sum_{1\leq j\leq n}A^i_jB^j_\ell=(AB)^i_\ell.
  $$
  
  Assume now that $k>1$ for $B$ and denote by $I=(i_1,\dots,i_k)$ the rows
  with a 1 and all other entries equal to 0. Every line (if any) of $A$ 
  with a 1 and all other entries equal to 0 leaves unaltered the corresponding
  row in $B$ and therefore the new line satisfies the conditions in the theorem.
  Otherwise we notice that 
  $$
  \sum_{k\neq\ell}(AB)^i_k=\sum_{\substack{1\leq j\leq n\\ k\neq \ell}}A^i_j B^j_k\geq
  \sum_{\substack{1\leq j\leq n\\ j\not\in I}}A^i_jB^j_\ell
  + \sum_{\substack{1\leq j\leq n\\ j\in I}}A^i_j \sum_{k\neq\ell}B^j_k.
  $$
  If $j\in I$ then $\sum_{k\neq\ell}B^j_k$ is either 0 or 1. Since by hypothesis
  there are at least two such rows and 
  $\sum_{k\neq\ell}B^j_k+\sum_{k\neq\ell}B^{j'}_k\geq1$ for every $j,j'\in I$, 
  $j\neq j'$, and the corresponding entries $A^i_j$ and $A^i_{j'}$ are both
  equal to 1, then
  $$
  \sum_{\substack{1\leq j\leq n\\ j\in I}}A^i_j \sum_{k\neq\ell}B^j_k
  \geq 
  \sum_{\substack{1\leq j\leq n\\ j\in I}}A^i_j B^j_\ell = (AB)^i_\ell,
  $$
  therefore $\sum_{k\neq\ell}(AB)^i_k\geq (AB)^i_\ell$.
\end{proof}
\begin{proposition}
  The Apollonian semigroup is a fast gasket with coefficient $c\geq1/4$.
\end{proposition}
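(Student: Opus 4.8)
The plan is to transcribe, into the real nonnegative setting, the argument already used for the complex Apollonian gasket: first locate the entry of a product that realises its max norm, then reduce the fast inequality to a finite list of explicit computations indexed by the admissible next-to-diagonal starting blocks. The decisive simplification over the complex case is that every $H_I$ has nonnegative entries, so there is no cancellation and one gets the free lower bound $(PQ)_{pq}\geq P_{pk}Q_{kq}$ by retaining a single middle index $k$. Throughout I would lean on the preceding Lemma, which guarantees that every $H_I$ is a nonnegative integer matrix whose ``big'' rows satisfy the triangle-type inequality (each entry bounded by the sum of the remaining entries of its row).

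First I would prove, by induction on $|I|$ and in exact analogy with the identity $\|A_I\|=|(A_I)_{22}|$ of the complex example, the dominant-entry identity $\|H_I\|=(H_I)_{34}$. The clean point is that the third row of every generator equals $(1,1,1,2)$, so the third row of $H_I=H_{i_1}\cdots H_{i_k}$ is $(1,1,1,2)$ applied to $H_{i_2\cdots i_k}$; this row therefore accumulates the whole product and, by the Lemma's big-row property, carries the global maximum (rows $1,2$ either stay unit rows or merely reproduce an earlier, hence smaller, big row, and row $4$ of each generator has a zero so it lags behind row $3$). Positivity then pins the maximum of row $3$ at column $4$, the slot of the original ``$2$'', which is the matrix analogue of Boyd's curvature row.

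Next I would write an arbitrary $J\in\cJ^3\cdot\cI^3$ as $J=J_0L$ with $J_0\in\cJ^3$, so that $H_{J_0}=H_iH_j$ ($i\neq j$) is one of the six explicit $4\times4$ blocks and $H_J=H_{J_0}H_L=H_{J_0L}$. Applying $\|H_I\|=(H_I)_{34}$ to both factors and keeping one middle index,
$$
\|H_{IJ}\|=\|H_I\,H_{J_0L}\|\ \geq\ (H_I)_{34}\,(H_{J_0L})_{44}\ =\ \|H_I\|\,(H_{J_0L})_{44},
$$
so the whole proposition collapses to the single inequality $(H_{J_0L})_{44}\geq\tfrac14(H_{J_0L})_{34}=\tfrac14\|H_J\|$. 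On the bare blocks ($L=0$) this is a direct check of the six matrices; the worst cases are $J_0=12$ and $J_0=13$, where $(H_{J_0})_{44}=1$ and $(H_{J_0})_{34}=4$ give the ratio exactly $1/4$, which is precisely the source of the constant in the statement. By the permutation and reflection symmetry of the generators the six cases reduce to two representative computations.

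The hard part will be propagating the quarter-ratio $(H_{J_0L})_{44}\geq\tfrac14(H_{J_0L})_{34}$ from the bare block to all $L$. Right multiplication by $H_L$ mixes the columns, and the naive termwise comparison $(H_{J_0})_{4k}\geq\tfrac14(H_{J_0})_{3k}$ fails exactly at the columns where $H_1H_j$ has a $0$ in its fourth row, so a column-by-column argument is unavailable. The way around this is an auxiliary induction showing that the fourth column of every Hirst product $H_L$ is dominated by its entries in the big rows $3$ and $4$, the rows $1,2$ contributing negligibly; this is again a consequence of the structural Lemma applied to $H_L$. Once the ``bad'' columns are known to carry little weight, the strict surplus of the block in columns $3,4$ (where the ratio exceeds $1/4$) absorbs their deficit and the quarter-ratio survives for $H_{J_0L}$. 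Combined with the dominant-entry identity this yields $\|H_{IJ}\|\geq\tfrac14\|H_I\|\,\|H_J\|$ for all admissible $I,J$, that is $c_\bH\geq1/4$.
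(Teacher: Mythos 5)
Your proof breaks at its foundation: the dominant-entry identity $\|H_I\|=(H_I)_{34}$ is false. The third row of $H_1^k$ is $(k^2,\,k^2,\,1,\,2k)$ (by induction: $(1,1,1,2)$, $(4,4,1,4)$, $(9,9,1,6)$, $(16,16,1,8),\dots$), so for $k\geq3$ the norm $\|H_1^k\|=k^2$ sits in columns $1$ and $2$, while $(H_1^k)_{34}=2k$ is far smaller. The complex identity $\|A_I\|=|(A_I)_{22}|$, which the paper does prove for the $2\times2$ Apollonian semigroup, does not transcribe to the Hirst semigroup, precisely because the parabolic generator $H_1$ piles the mass of the third row onto the first two columns. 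The failure is quantitative, not cosmetic: your chain honestly yields only
$$
\|H_{IJ}\|\;\geq\;(H_I)_{34}\,(H_J)_{44},
$$
and for $I=1\cdots1$ ($k$ ones) and $J=12$ this is $2k\cdot1=2k$, whereas the proposition demands $\tfrac14\|H_1^k\|\,\|H_{12}\|=\tfrac14\,k^2\cdot4=k^2$; the two inequalities in which you replace $(H_I)_{34}$ by $\|H_I\|$ and $(H_J)_{34}$ by $\|H_J\|$ are exactly where the argument collapses. (The product itself is fine: $\|H_1^kH_{12}\|=(k+2)^2\geq k^2$, but no single-entry bound sees this.)

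Nor can the scheme be repaired while keeping a single middle index, and this is exactly where the paper's proof takes a different turn. The true statement is $\|H_I\|=\max_q(H_I)_{3q}$ (the third row dominates every column, a property preserved under products), but the maximizing column can be $1$ or $2$ (as for $H_1^k$, $k\geq3$), and rows $1,2$ of $H_J$ can be unit rows, bounded by $1$ while $\|H_J\|\to\infty$; so the corresponding single entry of $H_J$ carries no information about $\|H_J\|$ and no uniform constant can come out of a product of two single entries. The paper's mechanism is built to avoid this: by its structural Lemma, each entry of the third row of $H_I$ is at most the sum of the remaining three, hence the sum of \emph{any three} entries of that row is at least $\|H_I\|$; since every block $H_{ij}$, $i\neq j$, has at least three nonzero entries in its third column, the product picks up such a triple sum at once, e.g. $(H_IH_{ij})_{33}\geq\|H_I\|$; and the constant $1/4$ then arises from the norm comparison $\|A\|\leq\|A\|_{\infty}\leq4\|A\|$ between the max norm and the row-sum norm, not from the ratio $(H_{12})_{44}/(H_{12})_{34}$ that your finite check isolates. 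Your proposal would need this multi-entry mechanism injected at the very first step; as written, it cannot be completed.
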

\begin{proof}
  Note first of all that Hirst matrices satisfy previous Lemma's conditions.
  Moreover the entries in the third line are not smaller than all other entries
  in the same column and it is easy to see by induction that this property
  is preserved by products.

  Let $\|A\|_\infty=\max_{1\leq i\leq n}\sum_{1\leq j\leq n}|A^i_j|$.
  A look at the 6 matrices $H_{ij}$, $i\neq j$, shows that their 
  third column has always at least three non-zero entries, so that 
  $\|A_{IJ}\|_\infty\geq \|A_J\|_\infty\sum_{j\neq j_0}|A^i_3|$ where $j_0$ is
  the index of the element of the third column (if any) equal to zero
  (otherwise just set $j_0=1$). By the previous Lemma and the 
  fact that the norm of every $A\in\bH$ in concentrated in the third
  row, the sum of any three entries of the third row of $A$ is always
  larger than $\|A\|$, so that $\|A_{IJ}\|_\infty\geq \|A_J\|_\infty \|A_I\|$.
  Since $4\|A\|\geq\|A\|_\infty\geq\|A\|$, the claim follows.
\end{proof}
Analytical bounds for the exponent $s_\bH$ of the Apollonian 
semigroup were studied in detail by Boyd in~\cite{Boy70,Boy72,Boy73a}
and we do not attempt to improve them here. 

Increasingly accurate {\em numerical} evaluations of $s_\bH$ with
several different techniques have been given over the last 
half-century by Melzak~\cite{Mel69}, Boyd~\cite{Boy82}, Manna and
Herrmann~\cite{MH91}, Thomas and Dhar~\cite{TD94} and 
McMullen~\cite{McM98} giving respectively 
the following values, with a {\em heuristic} error of 1 unit on the 
last digit: 1.306951, 1.3056, 1.30568, 1.30568673, 1.305688.
We remark that, among all these evaluations, the one with the 
largest number of digits, given by Thomas and Dhar, is the only one
based on a {\em heuristic} method, while the others are based
on {\em exact} methods.

Partly to test our own software evaluating the function $N_\bH(k)$
for a generic gasket $\bH$ and partly because the computational
power of computers increased quite a lot over the last fifteen years,
which is how old is the last evaluation of the exponent, we 
repeated the elementary evaluation made by Boyd in 1982 by 
evaluating $N_\bH(k)$ for $k=2^p$, $p=1,\dots,40$ with respect 
to the norm $\|A_I\|=\sum_{1\leq i,j\leq 4}(A_I)_{ij}v^iw^i$,
where $v=(-1,2,2,0)$ and $w=(1,1,1,2)$ (this way $\|A_I\|$ is equal
to the the curvature of the circle of multi-index $I$
in the Apollonian gasket generated by the circles of radius $-1,2,2$), 
and then interpolating the data obtained (see Table~\ref{tab:N}).
We found a value of $s_\bH\simeq1.30568673$ which fully confirms the 
heuristic evaluation of Thomas and Dhar and suggests an error of 2 
on the last digit of the estimate of McMullen.
\begin{table}
  \centering
  \begin{tabular}{|U|T|}
    \hline
    \begin{tabular}{c}
      $\boC_2$\cr(19)\cr
      \end{tabular}
    &\footnotesize 3, 15, 71, 287, 1231, 4911, 19831, 79279, 318383, 1273807, 5098247, 20391887, 81590055, 326364583, 1305483999, 5221928631, 20888160751, 83552534287, 334211194663\\
    \hline
    \begin{tabular}{c}
      $\boC_3$\cr(13)\cr
      \end{tabular}
    &\footnotesize 4, 22, 148, 760, 4594, 24646, 136372, 740650, 4046188, 22022770, 119929126, 652445212, 3550689778\\
    \hline
    \begin{tabular}{c}
      $\boC_4$\cr(12)\cr
    \end{tabular}
    &\footnotesize 5, 37, 293, 2197, 15125, 103669, 714245, 4849045, 32901077, 222724789,1507986917, 10202765749\\
    \hline
    \begin{tabular}{c}
      $\bA_3$\cr(13)\cr
    \end{tabular}
    &\footnotesize 3, 12, 64, 316, 1784, 10004, 58224, 341386, 2033906, 12170708, 73208110, 441772966, 267292497\\
    \hline
    \begin{tabular}{c}
      $\bH$\cr(39)\cr
    \end{tabular}
    &\tiny 0, 1, 3, 8, 18, 48, 113, 278, 681, 1722, 4238, 10488, 25927, 64086, 158266, 391062, 967315, 2390800, 5909752, 14608522, 36115118, 89275994, 220684802, 545546400, 1348603780, 3333755028, 8241076212, 20372155276, 50360227721, 124491161884, 307744098990, 760747405278, 1880578271904, 4648814463680, 11491932849933, 28408221038996, 70225503797745, 173598409768852, 429137646728801\\
    \hline
    \begin{tabular}{c}
      $\bF$\cr(35)\cr
    \end{tabular}
    &\tiny2, 7, 16, 34, 84, 151, 348, 679, 1546, 3034, 6546, 13476, 28409, 59578, 122139, 261698, 531191, 1144823, 2314772, 4986951, 10132768, 21667197, 44400099, 94074745, 194587388, 408651488, 852101402, 1777247239, 3726410796, 7738675037, 16274400897, 33739772516, 71002774691, 147235829060, 309533001058\\
    \hline
  \end{tabular}
  \caption{%
    Values of $N_\bA(2^k)$ for small $k$ for the {\em cubic semigroups} 
    $\boC_i$, $i=2,3,4$, the Apollonian semigroup $\bA_3$, the Hirst semigroup 
    $\bH$ and the semigroup $\bF$ of Example~\ref{ex:Fal}. In the left column 
    it is also reported the numnber of terms displayed in the right one.
  }
  \label{tab:N}
\end{table}

\section*{Acknowledgments}
I am grateful to S.P. Novikov, I.A. Dynnikov and B. Hunt 
for several precious insights and suggestions on the matter presented
in this article and thank them 
and T. Gramchev for several fruitful discussions while 
writing the paper. Finally I am grateful to my wife M. Camba
for helping speeding up considerably my code to evaluate the 
functions $N(k)$. Most calculations were done on the 
$\simeq$100 cores 2,66GHz Intel Xeon Linux cluster of {\em INFN} 
(Cagliari); latest calculations were also performed on 
the iMac cluster of the {\em Laboratory of geometrical methods 
in mathematical physics} (Moscow), recently created by the Russian Government 
(grant no. 2010-220-01-077) for attracting leading scientists 
to Russian professional education institutes. Finally I an grateful
to the IPST and the Mathematics Department of the University of Maryland 
for their hospitality in the Spring and Fall 2011 while I was 
working at the paper.
\bibliography{refs}
\end{document}